\newtheorem{theorem}{Theorem}[section]
\newtheorem{proposition}[theorem]{Proposition}
\newtheorem{corollary}[theorem]{Corollary} 
\newtheorem{lemma}[theorem]{Lemma} 
\newtheorem{definition}[theorem]{Definition}
\theoremstyle{definition} 
\numberwithin{equation}{section} 
\newcommand\bm{\begin{pmatrix}} 
\renewcommand\em{\end{pmatrix}}
\renewcommand\({\Big(} 
\renewcommand\){\Big)}
\newcommand\er{\eqref} 
\newcommand\be[1]{\begin{equation}\label{#1}}
\newcommand\ee{\end{equation}} 
\newcommand\<{\langle}
\renewcommand\>{\rangle}
\newcommand\W{\bigwedge\limits}
\newcommand\la{\alpha}
\newcommand\lb{\beta}
\renewcommand\lg{\gamma}
\newcommand\Le{\epsilon}
\newcommand\lh{\eta}
\newcommand\li{\iota}
\newcommand\lk{\kappa}
\renewcommand\ll{\lambda}
\newcommand\lF{\Phi}
\newcommand\lQ{\Psi}
\newcommand\lp{\pi}
\newcommand\lr{\rho}
\newcommand\lT{\Theta}
\newcommand\Lt{\tau}
\newcommand\lO{\Omega}
\newcommand\lx{\xi}
\newcommand\lX{\Xi}
\newcommand\dl{\partial} 
\newcommand\el{\ell}
\newcommand\os{\sim} 
\newcommand\ui{\cap}
\newcommand\yi{\wedge}
\newcommand\ic{\subset}
\newcommand\qi{\mapsto}
\newcommand\xx{\times}
\newcommand\xt{\otimes}
\newcommand\Al{{\mathbf A}} 
\newcommand\Bl{{\mathbf B}} 
\newcommand\Cl{{\mathbf C}}
\newcommand\Kl{{\mathbf K}}
\newcommand\gL{\mathfrak g}
\newcommand\hL{\mathfrak h}
\newcommand\kL{\mathfrak k}
\newcommand\Ll{\mathfrak l}
\newcommand\mL{\mathfrak m}
\newcommand\pL{\mathfrak p}
\newcommand\tL{\mathfrak t}
\newcommand\al{\boldsymbol a} 
\newcommand\bl{\boldsymbol b}
\newcommand\cl{\boldsymbol c}
\newcommand\kl{\boldsymbol k}
\newcommand\ol{{\boldsymbol o}}
\renewcommand\d[1]{{\dot{#1}}}
\renewcommand\t[1]{{\widetilde{#1}}} 
\renewcommand\o[1]{{\overline{#1}}}
\newcommand\F[1]{{\sqrt{#1}}} 
\newcommand\f[2]{{\frac{#1}{#2}}}
\begin{document} 

\title[Homogeneous bundles on hermitian symmetric spaces]{Homogeneous holomorphic
hermitian principal bundles over hermitian symmetric spaces} 

\author{Indranil Biswas}
\address{School of Mathematics, Tata Institute of Fundamental Research, 1 Homi Bhabha Road, 
Mumbai 400005, India}
\email{indranil@math.tifr.res.in}

\author{Harald Upmeier}
\address{Fachbereich Mathematik und Informatik, Philipps-Universit\"at Marburg, Lahnberge, 
Hans-Meerwein-Strasse, D-35032 Marburg, Germany}
\email{upmeier@mathematik.uni-marburg.de}

\keywords{Irreducible hermitian symmetric space; principal
bundle; homogeneous complex structure; hermitian structure.}

\subjclass[2010]{32M10, 14M17, 32L05}

\begin{abstract}  
We give a complete characterization of invariant integrable complex
structures on principal bundles defined over hermitian symmetric spaces, using the Jordan
algebraic approach for the curvature computations. In view of possible
generalizations, the general setup of invariant holomorphic principal fibre bundles is
described in a systematic way.
\end{abstract} 

\maketitle
\tableofcontents

\section{Introduction}

The classification of hermitian holomorphic vector bundles, or more general holomorphic principal fibre bundles, over a complex manifold $M$ is a central problem in algebraic geometry and quantization theory, e.g. for a compact Riemann surface $M.$ In geometric quantization, where $M=G/K$ is a co-adjoint orbit, $G$-invariant principal fibre bundles have been investigated from various points of view \cite{Bo,Ra,OR}. In case $M$ is a hermitian symmetric space of non-compact type, a complete characterization of invariant integrable complex structures on principal bundles over $M$ was obtained in \cite{BM} (for the unit disk) and \cite{B} (for all bounded symmetric domains).

The main objective of this paper is to treat the dual case of compact hermitian symmetric spaces, and to show that the compact case (as well as the flat case) leads to exactly the same characterization, resulting in an explicit duality correspondence for invariant integrable complex principal bundles (with hermitian structure) between the non-compact type, the compact type and the flat type as well. The proof is carried out using the Jordan theoretic approach towards hermitian symmetric spaces \cite{L,FK}. Of course, the traditional Lie triple system approach could be used instead, but Jordan triple systems (essentially the hermitian polarization of the underlying Lie triple system) make things more transparent and somewhat more elementary. 

More importantly, the Jordan triple approach leads in a natural way to more general complex homogeneous (non-symmetric) manifolds $G/C$ where $C$ is a proper subgroup of $K.$ These manifolds are fibre bundles over $G/K$ with compact fibres given by Jordan theoretic flag manifolds. Again, there exists a duality between such spaces of compact, non-compact and flat type, and in a subsequent paper \cite{BU} the duality correspondence for invariant fibre bundles, proved here in the hermitian symmetric case $C=K,$ will be studied in the more general setting. In view of these more general situations, the current paper describes the general setup for homogeneous holomorphic principal fibre bundles in a careful way, specializing to the symmetric case only in the last section.
  
As a next step beyond the classification, its dependence on the underlying complex structure on $M$ is of fundamental importance. While the hermitian symmetric case $G/K$ has a unique $G$-invariant complex structure, the more general flag manifold bundles $G/C$ have an interesting moduli space of invariant complex structures. It is a challenging problem whether this moduli space 
carries a projectively flat connexion (with values in the vector bundle of holomorphic sections) similar to the case of abelian varieties, \cite{Mu}, \cite{Wa}, or Chern-Simons theory \cite{ADW}, \cite{Wi}.

From the geometric quantization point of view, it is also of interest to describe the spaces of holomorphic sections, given by suitable Dolbeault operators, in an explicit way.

\section{Homogeneous $H$-bundles}

Let $M$ be a manifold and $G$ a connected real Lie group, with Lie algebra $\gL,$ acting
smoothly on $M.$ Denoting the action $G\xx M\to M$ by $(a,x)\mapsto a(x),$ we define $R_x^M:G\to M$,
$x\in M$, by 
$$R_x^M(a)=a(x)\qquad\forall\ (a,x)\in G\xx M.$$ 
Let $H$ be a connected complex Lie group; its Lie algebra will be denoted by $\hL$. Fix a
maximal compact subgroup $L\ic H;$ all such subgroups are conjugate in $H.$
The Lie algebra of $L$ is denoted by $\Ll.$ Let $Q$ be a $C^\infty$
principal $H$-bundle over $M,$ with projection $\lp:Q\to M=Q/H.$ The free action $Q\xx H\to Q$ is written as $(q,b)\mapsto qb.$ Define $R_b^Q:Q\to Q$ and $L_q^Q:H\to Q$ by 
$$R_b^Qq=L_q^Qb=qb\qquad\forall\ (q,b)\in Q\xx H.$$ 
Then $\lp\circ R_b^Q=\lp$ and $ker(d\lp)_q\subset T_qQ$ is the ``vertical'' subspace of
the tangent space $T_qQ$ at $q\in Q.$ We call $Q$ an {\bf equivariant} $H$-bundle if there
is a $C^\infty$ action $G\xx Q\to Q,$ denoted by $(a,q)\mapsto aq,$ such that $\lp(aq)=a\lp(q)$ and
$$a(qb)=(aq)b.$$
Define $L_a^Q:Q\to Q$ and $R_q^Q:G\to Q$ by 
$$L_a^Q q=R_q^Q a=aq\qquad\forall\ (a,q)\in G\xx Q.$$ 
Then $L_a^Q\circ R_b^Q=R_b^Q\circ L_a^Q$ and $\lp\circ L_a^Q=L_a^M\circ\lp$ for all
$a\in G,\,b\in H.$ A {\bf hermitian structure} on a principal $H$-bundle $Q$ is a 
principal subbundle $P\ic Q$ with structure group $L,$ i.e., we have
$$R_b^Q:P\to P\qquad\forall\ b\in L$$
and the action of $L$ on the fibre $P_x$ is transitive for all $x\in M$.
We also say that $(Q,P)$ is a hermitian $H$-bundle. An equivariant hermitian $H$-bundle
is an equivariant $H$-bundle $Q$ endowed with a hermitian structure $P\ic Q$ such that $G\cdot P=P,$ i.e., we have 
$$L_a^QP=P\qquad\forall\ a\in G.$$ 
In this case $P$ becomes an equivariant $L$-bundle. When $G$ acts transitively on $M$, we
call $Q$ a {\bf homogeneous} $H$-bundle. In the homogeneous case, fix a base point $o\in M$ and put 
$$K=\{k\in G\,\mid\,k(o)=o\}.$$ 
Then $M=G/K.$ The Lie algebra of $K$ will be denoted by $\kL.$

The following proposition is straight-forward to prove.

\begin{proposition}\label{prop1}
Let $f:K\to H$ be a (real-analytic) homomorphism. Consider the quotient manifold 
$$Q:=G\xx_{K,f}H$$ 
consisting of all equivalence classes 
\be{1}\<g|h\>=\<gk^{-1}|f(k)h\>,\ee 
with $g\in G,\,h\in H$ and $k\in K.$ (This bracket notation is used to avoid confusion with the commutator bracket.) Then $Q$ becomes a homogeneous $H$-bundle, with projection 
$\lp(g|h)=g(o)=R_o^M(g).$ The action of $(a,b)\in G\xx H$ is given by
$$a\<g|h\>b=\<ag|hb\>.$$
If in addition, $f(K)\ic L$, we obtain a hermitian homogeneous principal $H$-bundle
$$P:=G\xx_{K,f}L\ic Q:=G\xx_{K,f}H.$$ 
\end{proposition}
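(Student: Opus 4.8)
The plan is to realize $Q=G\xx_{K,f}H$ as the fibre bundle associated to the principal $K$-bundle $G\to G/K=M$ through the homomorphism $f$ acting on $H$ by left translations, and then to verify the axioms of a homogeneous hermitian $H$-bundle one after another; every verification reduces to an elementary manipulation of the bracket \er{1}. First I would settle the smooth structure: since $K$ is the stabilizer of $o$ it is a closed subgroup of $G$, so the $K$-action on $G\xx H$ given by $k\cdot(g,h)=(gk^{-1},f(k)h)$ is free (because $gk^{-1}=g$ forces $k=e$) and proper (because $K$ is closed in $G$). Hence $Q$ is a smooth manifold and $G\xx H\to Q$ is a smooth submersion. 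Because \er{1} uses only right translations in the $G$-variable and left translations in the $H$-variable, the assignments $\lp\<g|h\>=g(o)$, $R_b^Q\<g|h\>=\<g|hb\>$ and $L_a^Q\<g|h\>=\<ag|h\>$ descend to well-defined smooth maps on $Q$: use $k^{-1}(o)=o$ for $\lp$, the associativity $f(k)(hb)=(f(k)h)b$ for $R_b^Q$, and $a(gk^{-1})=(ag)k^{-1}$ for $L_a^Q$.

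Next I would check that $Q$ is a principal $H$-bundle over $M$. The right $H$-action is free, since $\<g|hb\>=\<g|h\>$ forces $k=e$ and then $b=e$, and it is transitive on each fibre, because a general point of $\lp^{-1}(g(o))$ has the form $\<gk|h'\>$ with $k\in K$ and $\<gk|h'\>=\<g|f(k)h'\>=\<g|h\>\cdot\big(h^{-1}f(k)h'\big)$. Local triviality follows from any smooth local section $\ls\colon U\to G$ of $G\to M$: the map $U\xx H\to\lp^{-1}(U)$, $(x,h)\qi\<\ls(x)|h\>$, is an $H$-equivariant diffeomorphism, which is just the standard trivialization of an associated bundle. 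Finally $\lp(aq)=a\lp(q)$ because $(ag)(o)=a(g(o))$, and $a(qb)=(aq)b$ by comparing $\<ag|hb\>$ obtained in the two ways; as $G$ acts transitively on $M=G/K$, $Q$ is a homogeneous $H$-bundle, with the stated $G\xx H$-action.

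For the hermitian part, assume $f(K)\ic L$. Then the same $K$-action restricts to $G\xx L$, so $P=G\xx_{K,f}L$ is a smooth manifold, and the inclusion $L\hookrightarrow H$ induces a map $P\to Q$. This map is injective, because the restriction of \er{1} to $G\xx L$ is precisely the relation defining $P$: if $\<g|\el\>=\<g'|\el'\>$ holds in $Q$ with $\el,\el'\in L$, the connecting element $k\in K$ satisfies $\el'=f(k)\el\in L$. Hence $P\ic Q$ is a subbundle; it is a principal $L$-bundle, since $R_b^Q$ maps $P$ into $P$ for $b\in L$ (as $\el b\in L$) and $L$ acts transitively on each $P_x$ by the fibre-transitivity argument above with $f(k)\in L$; and $L_a^QP=P$ since $\<ag|\el\>\in P$. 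Therefore $(Q,P)$ is an equivariant hermitian $H$-bundle, homogeneous because $G$ acts transitively on $M$.

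The one step that is more than bookkeeping is the assertion that $Q$ (and likewise $P$) carries a smooth manifold structure for which $\lp$ is a locally trivial fibration; this is exactly where closedness of $K$ in $G$ enters, through the general construction of the bundle associated to a principal bundle and a Lie group homomorphism. Everything else, as the paper indicates, is formal.
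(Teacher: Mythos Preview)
Your argument is correct and is exactly the routine verification the paper has in mind: the paper does not actually write out a proof but simply declares the proposition ``straight-forward to prove,'' and what you have supplied is the standard associated-bundle construction together with the direct checks of well-definedness, freeness, fibre-transitivity, local triviality via a local section of $G\to G/K$, and the equivariance identities. Your remark that the only substantive ingredient is the closedness of $K$ (ensuring a smooth quotient and local triviality of $G\to G/K$) is on target.
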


In the set-up of Proposition \ref{prop1}, the maps $L_a^Q:Q\to Q$ and $R_{g|h}^Q:G\to Q$
have the form
$$L_a^Q\<g|h\>=R_{g|h}^Q(a)=\<ag|h\>.$$
For any $\el\in H$, let 
$$I_\el^H h=\el h\el^{-1}$$
be the inner automorphism induced by $\el.$ Then $d_eI_\el^H=Ad^{\hL}_\el.$ The conjugate
homomorphism $$I_\el^H\circ f:K\to H$$ induces the $H$-bundle isomorphism
$$G\xx_{K,f}H\to G\xx_{K,I_\el^H\circ f}H$$
mapping the equivalence class $\<g|h\>_f$ to $\<g|I_\el^Hh\>_{I_\el^H\circ f}.$

\begin{theorem}\label{a} Every homogeneous principal $H$-bundle $Q$ on $M= G/K$
is isomorphic to $G\xx_{K,f}H$ for a homomorphism $f:K\to H,$ which is unique up to
conjugation by elements $\el\in H.$ Similarly, every hermitian homogeneous $H$-bundle $(Q,P)$ is isomorphic to the pair
$$G\xx_{K,f}L\ic G\xx_{K,f}H$$
for a homomorphism $f:K\to L\ic H,$ which is unique up to conjugation by an element in $L.$ More precisely, for any base point $\ol\in Q_o$ (respectively, $\ol\in P_o$) there exists a unique homomorphism $f_\ol:K\to H$ (respectively, $f_\ol:K\to L$) such that
\be{2}k\ol=\ol f_\ol(k)\qquad\forall\ k\in K,\ee
and
\be{3}\<g|h\>\mapsto g\ol h\ee 
defines an isomorphism $G\xx_{K,f_\ol}H\to Q$ of equivariant $H$-bundles (respectively, an
isomorphism $G\xx_{K,f_\ol}L\to P$ of hermitian equivariant $H$-bundles). Another base
point $\ol'=\ol\el^{-1},$ with $\el\in H$ (respectively, $\el\in L$), corresponds to the
homomorphism $f_{\ol'}=I_\el\circ f_\ol.$
\end{theorem}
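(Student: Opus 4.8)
The plan is to construct the homomorphism $f_\ol$ directly from a choice of base point and then verify that the natural map $\<g|h\>\mapsto g\ol h$ is a well-defined isomorphism, deducing the general (base-point-free) statement as a corollary. First I would fix $\ol\in Q_o$. Since $K$ fixes $o\in M$ and the action of $G$ commutes with the right $H$-action, for each $k\in K$ the point $k\ol$ lies in the same fibre $Q_o$ as $\ol$; as $H$ acts simply transitively on fibres, there is a unique $f_\ol(k)\in H$ with $k\ol=\ol f_\ol(k)$, which is \er{2}. The homomorphism property follows by computing $(k_1k_2)\ol = k_1(k_2\ol)=k_1(\ol f_\ol(k_2))=(k_1\ol)f_\ol(k_2)=\ol f_\ol(k_1)f_\ol(k_2)$, using equivariance $k_1(qb)=(k_1q)b$; uniqueness of $f_\ol(k_1k_2)$ then gives $f_\ol(k_1k_2)=f_\ol(k_1)f_\ol(k_2)$. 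Real-analyticity of $f_\ol$ follows from smoothness of the actions together with the standard fact that a continuous homomorphism between Lie groups is real-analytic.

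Next I would check that \er{3} is well-defined on equivalence classes: replacing $\<g|h\>$ by $\<gk^{-1}|f_\ol(k)h\>$ sends it to $gk^{-1}\ol f_\ol(k)h = gk^{-1}(k\ol)h = g\ol h$, using \er{2}, so the map $\lF\colon G\xx_{K,f_\ol}H\to Q$, $\lF\<g|h\>=g\ol h$, is well-defined. It is $G$-equivariant and $H$-equivariant by $\lF(a\<g|h\>b)=\lF\<ag|hb\>=ag\ol hb=a(g\ol h)b$, and it covers the identity on $M$ since $\lp(g\ol h)=\lp(g\ol)=g(o)$, matching the projection of the associated bundle. For bijectivity: surjectivity on fibres follows because $H$ acts transitively on each fibre of $Q$ and $\lF$ hits $g\ol$ over $g(o)$ for every $g$; injectivity follows because if $g_1\ol h_1=g_2\ol h_2$ then $g_1(o)=g_2(o)$, so $g_2=g_1k^{-1}$ for some $k\in K$, and then $g_1\ol h_1 = g_1 k^{-1}\ol h_2 = g_1\ol f_\ol(k)^{-1}h_2$ forces $h_1=f_\ol(k)^{-1}h_2$, i.e. $\<g_1|h_1\>=\<g_2|h_2\>$. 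Smoothness of $\lF$ and of its inverse is a local computation using a local section of $G\to M$; since $\lF$ is a fibre-preserving $H$-equivariant smooth bijection between principal $H$-bundles, it is an isomorphism. The hermitian case is identical after observing that if $\ol\in P_o$ then $k\ol\in P_o$ as well (because $G\cdot P=P$), and $L$ acts simply transitively on $P_o$, so the unique $f_\ol(k)$ in fact lies in $L$; restricting $\lF$ to $G\xx_{K,f_\ol}L$ gives the isomorphism onto $P$, compatibly with the inclusions into $Q$.

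For the dependence on the base point, suppose $\ol'=\ol\el^{-1}$ with $\el\in H$ (resp. $\el\in L$). Then $k\ol' = k\ol\el^{-1} = \ol f_\ol(k)\el^{-1} = \ol'\el f_\ol(k)\el^{-1} = \ol' (I_\el^H\circ f_\ol)(k)$, so by uniqueness $f_{\ol'}=I_\el^H\circ f_\ol$. Conversely any $\ol'\in Q_o$ is of the form $\ol\el^{-1}$ for a unique $\el\in H$ (resp. $\el\in L$) by simple transitivity, so the collection of all admissible homomorphisms is exactly one conjugacy class under $H$ (resp. $L$); this gives both existence (pick any $\ol$) and the claimed uniqueness up to conjugation, combined with the remark preceding the theorem that $I_\el^H\circ f$ yields an isomorphic associated bundle.

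I expect the only real subtlety — everything else being formal bookkeeping with the two commuting group actions — to be the smoothness (real-analyticity) assertions: that $f_\ol$ is real-analytic rather than merely continuous, and that $\lF$ is a diffeomorphism rather than just a bijection. Both are handled by passing to a local smooth section $s\colon U\to G$ of $\lp_{G}\colon G\to G/K=M$ near $o$, writing $\lF$ in the induced local trivializations, and invoking the automatic-smoothness theorem for continuous homomorphisms of Lie groups; I would state these as the points requiring the ``straightforward'' verification and not belabor the local coordinate computations.
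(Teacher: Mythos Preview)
Your proof is correct and follows essentially the same approach as the paper's own proof: construct $f_\ol$ from the free $H$-action on the fibre, verify the homomorphism property by the same chain of equalities, check that \er{3} is well-defined and equivariant, and compute the change-of-base-point formula. You supply more detail than the paper does on bijectivity and smoothness of the map $\lF$, but these are exactly the routine verifications the paper suppresses, so there is no genuine difference in strategy.
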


\begin{proof}
Let $Q$ be a homogeneous $H$-bundle. Choose $\ol\in Q$ with $\lp(\ol)=o.$ Since the fibre
$Q_o$ is preserved by $K$, and $H$ acts freely on $Q_o$, there exists a unique map $f_\ol:K\to H$ such that \er{2} holds for all $k\in K.$ Then
$$\ol f_\ol(k_1k_2)\,=\,(k_1k_2)\ol\,=\,
k_1(k_2\ol)\,=\,k_1(\ol f_\ol(k_2))\,=\,(k_1\ol)f_\ol(k_2)
$$
$$
= \, (\ol f_\ol(k_1))f_\ol(k_2)\,=\,
\ol(f_\ol(k_1)f_\ol(k_2))$$ for all $k_1,k_2\in K.$ Since $H$ operates freely on $Q$, this
implies that $f_\ol(k_1k_2)=f_\ol(k_1)f_\ol(k_2)$, and hence $f_\ol:K\to H$ is a (real-analytic) homomorphism. For all $b\in H$ we have
$$k\ol b=(\ol f_\ol(k))b=\ol(f_\ol(k)b).$$
The resulting identity $$(ak)\ol b=a(k\ol b)=a(\ol(f_\ol(k)b))=a\ol(f_\ol(k)b)$$ shows
that \er{3} defines an isomorphism $G\xx_{K,f_\ol}H\to Q$ of equivariant
$H$-bundles, mapping the ``base point'' $\<e|e\>$ to $\ol.$ If $\ol'\in Q_o$ is another
base point, there exists a unique $\el\in H$ such that $\ol'=\ol\el^{-1}.$ It follows that
$$k\ol'=k\ol\el^{-1}=\ol f_\ol(k)\el^{-1}=\ol'\el f_\ol(k)\el^{-1}.$$
Thus the new base point $\ol'$ corresponds to the conjugate homomorphism
$$f_{\ol\el^{-1}}(k)=\el f_\ol(k)\el^{-1}=I_\el^Hf_\ol(k).$$

In the hermitian case, for any base point $\ol\in P_o\ic Q_o$ the defining identity 
\er{2} implies $f_\ol(k)\in L$ for all $k\in K.$ Another base point 
$\ol'=\ol\el^{-1}\in P_o$ differs by a unique element $\el\in L.$ In both cases, since 
$G$ acts transitively on $M,$ the $G$-invariance condition implies that the entire
construction is independent of the choice of base point $o\in M.$
\end{proof}

In view of \er{2}, the homomorphism $f_\ol$ could be denoted by $f_\ol=I_\ol^{-1},$ 
i.e., $k\ol=\ol I_\ol^{-1}(k).$ In this notation, the identity 
$I_{\ol\el^{-1}}^{-1}=I_\el\circ I_\ol^{-1}$ is obvious. It will be convenient to 
express the tangent spaces in an explicit manner using equivalence classes. Note
that the differential $d_ef:\kL\to\hL$ of $f$ at the unit element $e\in K$ is a Lie
algebra homomorphism.

\begin{lemma}
For a given class $\<g|h\>\in Q:=G\xx_{K,f}H$ the tangent space $T_{g|h}Q$ consists of all
equivalence classes
$$\<\d g|\d h\>=\<(d_gR^G_{k^{-1}})\d g-(d_eL^G_{gk^{-1}})\lk|(d_hL^H_{f(k)})\d h+
(d_eR^H_{f(k)h})(d_ef)\lk\>,$$
where $\d g\in T_gG,\,\d h\in T_hH,\,k\in K$ and $\lk\in\kL.$ 
\end{lemma}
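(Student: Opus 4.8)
The plan is to realize $Q=G\xx_{K,f}H$ as a quotient of $G\xx H$ by the free right $K$-action
$$(g,h)\cdot k=(gk^{-1},f(k)h),$$
whose orbit map $\ls:G\xx H\to Q$, $(g,h)\mapsto\<g|h\>$, is a surjective submersion. Hence $T_{g|h}Q$ is canonically the quotient of $T_{(g,h)}(G\xx H)=T_gG\op T_hH$ by the image of the differential of the $K$-action, and any tangent vector at $\<g|h\>$ can be written as $(d\ls)(\d g,\d h)$ for some $(\d g,\d h)\in T_gG\op T_hH$. So the first step is simply to record that every class in $T_{g|h}Q$ has a representative $\<\d g|\d h\>$ with $\d g\in T_gG$, $\d h\in T_hH$, and to identify precisely when two such representatives agree.

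The second step is to compute the kernel of $(d\ls)_{(g,h)}$, i.e. the tangent space to the $K$-orbit through $(g,h)$. Differentiating $t\mapsto(g\lg(t)^{-1},f(\lg(t))h)$ at $t=0$ for a curve $\lg$ in $K$ with $\lg(0)=e$, $\dot\lg(0)=\lk\in\kL$, and using the chain rule together with the identity $\tfrac{d}{dt}\lg(t)^{-1}\big|_0=-\lk$, gives the orbit tangent vector
$$\big(-(d_eL^G_g)\lk,\ (d_eR^H_h)(d_ef)\lk\big)\in T_gG\op T_hH,\qquad \lk\in\kL.$$
Thus $\<\d g|\d h\>=\<\d g'|\d h'\>$ in $T_{g|h}Q$ precisely when $(\d g'-\d g,\d h'-\d h)$ lies in this $\kL$-subspace, i.e. $\d g'=\d g-(d_eL^G_g)\lk$ and $\d h'=\d h+(d_eR^H_h)(d_ef)\lk$ for some $\lk\in\kL$.

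The third step is to account for the fact that the base point of $Q$ is only a class: replacing the representative $(g,h)$ of $\<g|h\>$ by $(gk^{-1},f(k)h)$ for $k\in K$ should transport a given tangent vector, and the claimed formula is exactly this transport written out. Concretely, apply the differential of the diffeomorphism-onto-its-image $(g,h)\mapsto(gk^{-1},f(k)h)$ of $G\xx H$: the $G$-component of $\d g$ is carried to $(d_gR^G_{k^{-1}})\d g$, and the $H$-component of $\d h$ to $(d_hL^H_{f(k)})\d h$. Combining this with the freedom from Step 2 (now based at $(gk^{-1},f(k)h)$, where the orbit-correction terms become $-(d_eL^G_{gk^{-1}})\lk$ and $(d_eR^H_{f(k)h})(d_ef)\lk$) yields exactly the displayed description of $T_{g|h}Q$. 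One checks conversely that every expression of the displayed form represents a tangent vector at $\<g|h\>$, so the two sets coincide.

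The only real subtlety — and where I would be most careful — is bookkeeping with left/right translations and their differentials: getting the signs right (the $-\lk$ from inverting in $K$), making sure $d_gR^G_{k^{-1}}$ versus $d_eL^G_{gk^{-1}}$ act on the correct tangent spaces so that the sum $(d_gR^G_{k^{-1}})\d g-(d_eL^G_{gk^{-1}})\lk$ lands in $T_{gk^{-1}}G$, and similarly that $(d_hL^H_{f(k)})\d h+(d_eR^H_{f(k)h})(d_ef)\lk$ lands in $T_{f(k)h}H$. Everything else is the standard principal-bundle fact that the quotient map by a free proper action is a submersion, so the tangent space downstairs is the tangent space upstairs modulo the orbit directions; no deeper input is needed.
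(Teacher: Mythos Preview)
Your proof is correct and follows essentially the same approach as the paper: both differentiate the defining equivalence $\<g_t|h_t\>=\<g_t k_t^{-1}|f(k_t)h_t\>$ at $t=0$, the paper in one stroke (allowing $k_0=k$ arbitrary) and you by first treating the infinitesimal case $k=e$ and then composing with a fixed change of representative. The only slip is that $(g,h)\cdot k=(gk^{-1},f(k)h)$ defines a \emph{left} $K$-action rather than a right one, but this is inconsequential since only the orbits matter for the quotient.
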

Here we regard $TG$ and $TH$ as the disjoint union of the respective tangent spaces, so that the first expression is evaluated at 
$\<g|h\>$ whereas the second expression is evaluated at the same class written as $\<gk^{-1}|f(k)h\>.$ The identity follows from differentiating the relation $\<g_t|h_t\>=\<g_t k_t^{-1}|f(k_t)h_t\>$ at $t=0,$ where $k_t\in K$ satisfies $k_0=k$ and 
$\dl_t^0\,k_t=\lk.$ As special cases we obtain
\be{30}
\<\d g|\d h\>=\<(d_gR^G_{k^{-1}})\d g|(d_hL^H_{f(k)})\d h\>=
\<\d g-(d_eL^G_g)\lk|\d h+(d_eR^H_h)(d_ef)\lk\>\ee
putting $\lk=0$ or $k=e,$ respectively. The projection $\lp$ has the differential 
$$(d_{g|h}\lp)\<\d g|\d h\>= (\frac{d}{dt}(g_t)(0))(o)=(d_gR_o^M)\d g.$$
Thus the vertical subspace is given by 
$$ker(d_{g|h}\lp)=\{\<\d g|\d h\>\, \mid\,\d g\in ker(d_gR_o^M)\ic T_gG,\,\d h\in T_hH\}.$$
For $\lb\in\hL$, the fundamental vector field $\lr_\lb^Q$ has the form
$$(\lr_\lb^Q)_{g|h}=\dl_t^0\,\<g|hb_t\>=\<0_g|(d_eL^H_h)\lb\>=\<0_g|(d_eR^H_h)Ad_h^H\,\lb\>.$$

\section{Connexions and complexions}

A {\bf connexion} on a principal $H$-bundle $Q$ is a smooth distribution $q\mapsto T^\lT_qQ$
of ``horizontal'' subspaces of $T_qQ$ such that $T_qQ=T^\lT_qQ\oplus ker(d_q\lp)$ and
$$T^\lT_{qb}Q=(d_qR_b^Q)(T^\lT_qQ)\qquad\forall\ (q,b)\in Q\xx H.$$
We use the same symbol for the associated connexion 1-form $\lT_q:T_qQ\to\hL$ on $Q,$
uniquely determined by the condition that $X\in T_qQ$ has the horizontal projection 
$$X^\lT=X-(d_eL_q^Q)(\lT_q(X)).$$
A connexion $\lT$ on an equivariant $H$-bundle $Q$ is called {\bf invariant} if
$$(d_qL_a^Q)(T_q^\lT Q)=T_{aq}^\lT Q\qquad\forall\ a\in G.$$
In this case the associated connexion $1$-form satisfies 
$$\lT_q=\lT_{aq}(d_qL_a^Q).$$
Let $Q\xx_{H}\hL$ denote the associated bundle of type $Ad_H,$ with fibres
$$(Q\xx_{H}\hL)_x=\{[q:\lb]=[qh:Ad_h^{-1}\lb]\,\mid\,q\in Q_x,\lb\in\hL\}$$
for $x\in M,$ with $h\in H$ being arbitrary. By \cite[Section II.5]{KN} every tensorial $i$-form on $Q$ is given by
$$(\Cl^Q_q\circ d_q\lp)(X^1_q,\cdots,X^i_q):=\Cl^Q_q((d_q\lp)X^1_q,\cdots,(d_q\lp)X^i_q)$$
for $X^1_q,\cdots,X^i_q\in T_qQ,$ where
$$\Cl_x(v_1\yi\cdots\yi v_i)=[q:\Cl^Q_q(v_1\yi\cdots\yi v_i)]\qquad\forall\ q\in Q_x$$
is an $i$-form of type $Ad_H$ (on $M$), with homogeneous lift $\Cl^Q_q:\W^i T_xM\to\hL$ having the right invariance property
$$\Cl^Q_{qb}=Ad_{b^{-1}}^H\Cl^Q_q\qquad\forall\ b\in H.$$ 
An $i$-form $\Cl$ of type $Ad_H$ is called {\bf invariant} if
\be{4}\Cl^Q_{aq}(d_xL_a^M)=\Cl^Q_q\qquad\forall\ (a,q)\in G\xx Q.\ee
 
\begin{proposition}\label{d}
\mbox{}
\begin{enumerate}
\item[(i)] Let $\lT^0$ be a connexion on $Q$. If $\Cl$ is a $1$-form of type $Ad_H,$ then 
\be{6}\lT_q=\lT^0_q+\Cl^Q_q\circ d_q\lp\qquad\forall\ q\in Q\ee
is a connexion $1$-form on $Q$. Every connexion $1$-form $\lT$ on $Q$ arises this way.\\ 

\item[(ii)] In the equivariant case, let $\lT^0$ be an invariant connexion on $Q.$ Then $\lT$ is invariant if and only if $\Cl$ is invariant, i.e., 
\be{99}\Cl^Q_{aq}(d_xL_a^M)=\Cl^Q_q\qquad\forall\ (a,q)\in G\xx Q.\ee
\end{enumerate}
\end{proposition}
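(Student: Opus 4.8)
The plan is to establish (i) first, then deduce (ii) from it. For (i), the point is that the space of connexion $1$-forms on $Q$ is an affine space over the space of $\hL$-valued tensorial $1$-forms. Given the fixed connexion $\lT^0$ and an arbitrary $1$-form $\Cl$ of type $Ad_H$, I would verify that $\lT_q := \lT^0_q + \Cl^Q_q\circ d_q\lp$ satisfies the two defining axioms of a connexion $1$-form: reproduction of the Maurer–Cartan form on vertical vectors, and $Ad_H$-equivariance under $R^Q_b$. The first is immediate since $\Cl^Q_q\circ d_q\lp$ annihilates $ker(d_q\lp)$, so on a fundamental vector field $(\lr^Q_\lb)_q$ we get $\lT_q((\lr^Q_\lb)_q)=\lT^0_q((\lr^Q_\lb)_q)=\lb$. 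For equivariance, I would combine the known transformation law $\lT^0_{qb}(d_qR^Q_b)=Ad^H_{b^{-1}}\lT^0_q$ with the right-invariance property $\Cl^Q_{qb}=Ad^H_{b^{-1}}\Cl^Q_q$ recorded just before the proposition, together with the intertwining $d_q\lp\circ(d_qR^Q_b)=d_q\lp$, so the two terms transform identically. Conversely, given any connexion $1$-form $\lT$, the difference $\lT-\lT^0$ vanishes on vertical vectors (both reproduce Maurer–Cartan there) and is $Ad_H$-equivariant under $R^Q_b$ (both $\lT$ and $\lT^0$ are), hence it is a tensorial $1$-form, which by the cited result of \cite[Section II.5]{KN} descends to a $1$-form $\Cl$ of type $Ad_H$ with $\lT-\lT^0=\Cl^Q\circ d\lp$; this proves surjectivity.

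For (ii), one uses the characterization of invariance of a connexion $1$-form, namely $\lT_q=\lT_{aq}(d_qL^Q_a)$ for all $a\in G$, and compares it term by term against the corresponding identity for $\lT^0$, which holds by hypothesis. Substituting \er{6} at $q$ and at $aq$, and using $d_q\lp\circ(d_qL^Q_a)=(d_xL^M_a)\circ d_q\lp$ (the intertwining of $\lp$ with the $G$-action, valid with $x=\lp(q)$), the equation $\lT_q=\lT_{aq}(d_qL^Q_a)$ reduces, after cancelling the $\lT^0$-terms, exactly to $\Cl^Q_q=\Cl^Q_{aq}(d_xL^M_a)$ composed with $d_q\lp$; since $d_q\lp$ is surjective onto $T_xM$, this is equivalent to \er{99}. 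Thus invariance of $\lT$ is equivalent to invariance of $\Cl$.

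The only mild subtlety — and the one point I would be careful about — is the bookkeeping of base points in the equivariant transformation laws: the factor $d_qL^Q_a$ maps $T_qQ$ to $T_{aq}Q$ while $d_xL^M_a$ maps $T_xM$ to $T_{a(x)}M$, so one must check that the connexion forms and the type-$Ad_H$ forms appearing on the two sides are evaluated at the matching points $q$ and $aq$ (equivalently $x$ and $a(x)$), and that the homogeneous-lift and $Ad_H$-equivariance properties are invoked at the correct fibre. Once this indexing is kept straight, both parts are formal consequences of the definitions together with the two standard facts already assembled in the excerpt: the affine-space structure of connexions and the correspondence between tensorial forms on $Q$ and type-$Ad_H$ forms on $M$. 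I expect no genuine obstacle; the proposition is, as the text says of Proposition \ref{prop1}, essentially a matter of unwinding definitions.
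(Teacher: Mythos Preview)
Your proposal is correct and matches the paper's approach essentially line for line: the paper declares part (i) ``well-known'' without further comment, and for part (ii) it does exactly what you outline---use the intertwining $(d_{aq}\lp)(d_qL_a^Q)=(d_xL_a^M)(d_q\lp)$, subtract the $\lT^0$-terms, and invoke surjectivity of $d_q\lp$. The only slip is notational: in your intertwining identities you wrote $d_q\lp\circ(d_qL^Q_a)$ and $d_q\lp\circ(d_qR^Q_b)$ where the leftmost differential should be taken at $aq$ (respectively $qb$), but you already flagged this base-point bookkeeping as the point to watch, so no harm done.
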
 

\begin{proof} Part (i) is well-known. For part (ii) let the connexion
$\lT$ be invariant. The condition $\lp\circ L_a^Q=L_a^M\circ\lp$ implies
that $(d_{aq}\lp)(d_qL_a^Q)=(d_xL_a^M)(d_q\lp)$, and hence we have
$$\Cl^Q_q(d_q\lp)\,=\,\lT_q-\lT_q^0\,=\,(\lT_{aq}-\lT_{aq}^0)(d_qL_a^Q)
$$
$$
=\,\Cl^Q_{aq}(d_{aq}\lp)(d_qL_a^Q)\,=\,\Cl^Q_{aq}(d_xL_a^M)(d_q\lp)\, .$$
Since $d_q\lp$ is surjective, \er{99} follows. The converse is proved in a similar way.
\end{proof}

If \er{6} holds, we say that $\lT$ is related to $\lT^0$ via $\Cl.$ For a hermitian $H$-bundle $(Q,P)$ a connexion $\lX$ on $P$ is called {\bf invariant} if 
$$(d_pL_a^P)T_p^\lX P=T_{ap}^\lX P\qquad\forall\ (a,p)\in G\xx P.$$ 
In this case the associated connexion $1$-form satisfies 
$$\lX_p=\lX_{ap}(d_pL_a^P).$$ 
By \cite[Proposition II.6.2]{KN} every connexion $\lX$ on $P$ has a unique extension to a connexion $\li\lX$ on $Q$ such that
$$T_p^\lX P=T_p^{\li\lX}Q\qquad\forall\ p\in P\ic Q.$$
Equivalently, the connexion forms satisfy
$$(\li\lX)_p|_{T_pP}=\lX_p\qquad\forall\ p\in P.$$
A connexion $\lT$ on $Q$ is called {\bf hermitian} if $\lT=\li\lX$ for a (unique)
connexion $\lX$ on $P.$ Thus hermitian connexions on $Q$ are in 1-1 correspondence with
connexions on $P.$ A connexion $\lX$ on $P$ is invariant if and only if its extension
$\li\lX$ is invariant.

For hermitian $H$-bundles $(P,Q),$ we have $i$-forms of type $Ad_L$ written as
$$\Al_x(v_1\yi\cdots\yi v_i)=[p:\Al^P_p(v_1\yi\cdots\yi v_i)]\qquad\forall\ p\in P_x,$$
with homogeneous lift $\Al^P_p:\W^i T_xM\to\Ll$ having the right invariance property
\be{20}\Al^P_{p\el}=Ad_{\el^{-1}}^L\Al^P_p\qquad\forall\ \el\in L.\ee
Let $\Ll$ be the Lie algebra of $L.$ For each $x\in M$, there is a linear injection of fibres
$$\li_x:(P\xx_L\Ll)_x\to(Q\xx_H\hL)_x,\qquad[p,\lb]\qi[p,\li\lb],$$ 
where $p\in P_x,\lb\in\Ll$ and $\li:\Ll\to\hL$ is the inclusion map. The map $\li_x$ is
well-defined because $$[p_1,\lb_1]=[p_2,\lb_2]\in(P\xx_L\Ll)_x$$ implying
that $p_2=p_1\el$ and $\lb_2=Ad_{\el^{-1}}^L\lb_1$ for some $\el\in L.$ Therefore, we also
have $\li\lb_2=Ad_{\el^{-1}}^H\li\lb_1.$ To show that $\li_x$ is injective, suppose that
$[p_1,\li\lb_1]=[p_2,\li\lb_2]\in(Q\xx_H\hL)_x.$ Then we have $p_2=p_1h$ and $\li\lb_2=Ad_{h^{-1}}^H\li\lb_1$ for some $h\in H.$ Since $p_1,p_2\in P_x$ it follows that $h\in L$ and hence $\lb_2=Ad_{h^{-1}}^L\lb_1.$ 

As a consequence, an $i$-form $\Al$ of type $Ad_L$ induces an $i$-form $\li\Al$ of type $Ad_H,$ with homogeneous lift
$$(\li\Al)_{ph}^Q:=Ad_{h^{-1}}^H\Al_p^P:\W^i T_oM\to\hL\qquad\forall\ (p,h)\in P\xx H.$$ 
Then $\Al$ is invariant in the sense that
$$\Al^P_{ap}(d_xL_a^M)=\Al^P_p\qquad\forall\ (a,p)\in G\xx P$$ 
if and only if $\li\Al$ is invariant as in \er{4}.

\begin{proposition}\label{e}
\mbox{}
\begin{enumerate}
\item[(i)] For a hermitian bundle $(P,Q)$, let $\lX^0$ be a connexion on $P$. If $\Al$ is a $1$-form of type $Ad_L,$ then 
\be{18}\lX_p=\lX^0_p+\Al^P_p\circ d_p\lp\qquad\forall\ p\in P\ee
is a connexion $1$-form on $P$. Every connexion $1$-form $\lX$ on $P$ arises this way. We also have
$$(\li\lX)_q=(\li\lX)^0_q+(\li\Al)^Q_q\circ d_q\lp\qquad\forall\ q\in Q.$$

\item[(ii)] In the equivariant case, let $\lX^0$ be an invariant connexion on $P.$ Then $\lX$ is invariant if and only if $\Al$ is invariant, i.e., 
\be{5}\Al^P_{ap}(d_xL_a^M)=\Al^P_p\qquad\forall\ (a,p)\in G\xx P.\ee
\end{enumerate}
\end{proposition}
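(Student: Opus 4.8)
The plan is to mirror the proof of Proposition~\ref{d}, transporting everything along the inclusion $P\ic Q$ via the extension operation $\lX\mapsto\li\lX$ and the induction operation $\Al\mapsto\li\Al$, both of which have already been set up. Part~(i) is essentially formal. For the first assertion, one observes that $\lX^0_p+\Al^P_p\circ d_p\lp$ differs from $\lX^0_p$ by a horizontal (tensorial) term, so it still restricts to the identity on the vertical subspace $ker(d_p\lp)$ and transforms correctly under $R^P_\el$ for $\el\in L$; the transformation law uses exactly \er{20}, just as \er{6} used the $Ad_H$-invariance. That every connexion $1$-form $\lX$ on $P$ arises this way follows because the difference $\lX_p-\lX^0_p$ annihilates vertical vectors, hence factors through $d_p\lp$, and the resulting $\Ll$-valued form on $M$ is $Ad_L$-equivariant by the usual computation. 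For the compatibility statement $(\li\lX)_q=(\li\lX)^0_q+(\li\Al)^Q_q\circ d_q\lp$, I would check the identity first on $P\ic Q$, where by construction $(\li\lX)_p|_{T_pP}=\lX_p$, $(\li\lX^0)_p|_{T_pP}=\lX^0_p$, and $(\li\Al)^Q_p=\Al^P_p$; then both sides are connexion $1$-forms on $Q$ restricting to $\li\lX$ on $P$, and by the uniqueness clause in \cite[Proposition~II.6.2]{KN} they agree on all of $Q$. (Alternatively, verify the right-$H$-equivariance of $(\li\Al)^Q$ directly from its defining formula $(\li\Al)^Q_{ph}=Ad^H_{h^{-1}}\Al^P_p$ and extend from $P$ using $R^Q_h$-equivariance of all three terms.)

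For part~(ii), I would argue exactly as in Proposition~\ref{d}(ii). Suppose $\lX$ is invariant. The relation $\lp\circ L^P_a=L^M_a\circ\lp$ gives $(d_{ap}\lp)(d_pL^P_a)=(d_xL^M_a)(d_p\lp)$, and subtracting the (invariant) connexion $\lX^0$ from $\lX$ yields
$$\Al^P_p\circ d_p\lp=(\lX_p-\lX^0_p)=(\lX_{ap}-\lX^0_{ap})(d_pL^P_a)=\Al^P_{ap}\circ(d_{ap}\lp)(d_pL^P_a)=\Al^P_{ap}(d_xL^M_a)\circ d_p\lp.$$
Since $d_p\lp$ is surjective onto $T_xM$, this forces \er{5}. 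The converse runs the same identities in reverse: if \er{5} holds then $\lX_p$ and $\lX_{ap}(d_pL^P_a)$ both equal $\lX^0_p+\Al^P_p\circ d_p\lp$ (using invariance of $\lX^0$ for the constant term and \er{5} for the tensorial term), so $\lX$ is invariant.

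There is really no serious obstacle here; the proposition is the ``hermitian shadow'' of Proposition~\ref{d}, and the only points requiring a little care are (a) making sure the passage between $\Al$ on $P$ and $\li\Al$ on $Q$ is compatible with the passage between $\lX$ on $P$ and $\li\lX$ on $Q$ — this is precisely the content of the displayed identity in part~(i), and it is where the uniqueness in \cite[Proposition~II.6.2]{KN} does the work — and (b) keeping track of the two invariance notions, the right-$L$-equivariance \er{20} (built into the notion of an $i$-form of type $Ad_L$) versus the $G$-invariance \er{5}, which are logically independent. I would write out (i) in one or two lines invoking Proposition~\ref{d}(i) applied to $\li\lX^0$ together with the already-established equivalence ``$\Al$ invariant $\iff\li\Al$ invariant,'' and then note that (ii) is immediate from Proposition~\ref{d}(ii) because $\lX$ is invariant iff $\li\lX$ is, $\lX^0$ is invariant iff $\li\lX^0$ is, and $\li\lX$ is related to $\li\lX^0$ via $\li\Al$ by part~(i); this reduces everything to the already-proven non-hermitian case.
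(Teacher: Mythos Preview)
Your proposal is correct. The paper actually omits any explicit proof of Proposition~\ref{e}, tacitly treating it as the hermitian counterpart of Proposition~\ref{d}; your argument is precisely the expected transcription of the proof of Proposition~\ref{d} to the $L$-bundle $P$, together with the uniqueness clause in \cite[Proposition~II.6.2]{KN} to handle the extension identity, so you are supplying exactly what the paper leaves implicit.
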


If \er{18} holds, we say that $\lX$ is related to $\lX^0$ via $\Al.$ In this case, $\li\lX$ is related to $\li\lX^0$ via $\li\Al.$ 
Now suppose that $(M,j)$ is an almost complex manifold, with almost complex structure $j_x\in\,End(T_xM),\,x\in M,$ having the left invariance property
$$(d_xL_g^M)j_x=j_{gx}(d_xL_g^M)\qquad\forall\ (g,x)\in G\xx M.$$ 
Let $H$ be a complex Lie group. Consider the bi-invariant complex structure $i_h\in End(T_hH)$ on $H$ such that $i_e\in End(\hL)$ is multiplication by $\F{-1}.$
See \cite{A}, \cite{K} for complex structures on principal bundles.

\begin{definition} An almost complex structure $J_q\in End\,(T_qQ)$ on an $H$-bundle $Q$ is called a {\bf complexion} if
\be{7}(d_q\lp)J_q=j_x(d_q\lp)\ee
and the map $Q\xx H\to Q$ is almost-holomorphic. Writing $qb=L_q^Q(b)=R_b^Q(q)$ for
$q\in Q,\,b\in H,$ this means that
\be{8}(d_qR_b^Q)J_q=J_{qb}(d_qR_b^Q)\ee
\be{9}J_{qb}(d_bL_q^Q)=(d_bL_q^Q)i_b .\ee
In the equivariant case, a complexion $J$ is called {\bf invariant} if in addition
$$(d_qL_a^Q)J_q=J_{aq}(d_qL_a^Q)\qquad\forall\ a\in G.$$
\end{definition}
By right invariance the condition \er{9} is equivalent to
$$J_q(d_eL_q^Q)\lb=(d_eL_q^Q)(\F{-1}\lb)\qquad\forall\ \lb\in\hL.$$ 
Since the fibre $(Q\xx_H\hL)_x$ is a complex vector space, the
notion of $(p,q)$-forms of type $Ad_H$ makes sense.

\begin{proposition}\label{i}
\mbox{}
\begin{enumerate}
\item[(i)] Let $J^0$ be a complexion on $Q.$ If $\Bl$ is a $(0,1)$-form of type $Ad_H,$ then
\be{11}J_q=J_q^0+(d_eL_q^Q)(\Bl^Q_q\circ d_q\lp)\ee
defines a complexion $J$ on $Q.$ Every complexion $J$ on $Q$ arises this way.\\

\item[(ii)] Let $J^0$ be an invariant complexion on an equivariant bundle $Q.$ Then $J$ is
invariant if and only if $\Bl$ is invariant, i.e.,
\be{10}\Bl^Q_{aq}(d_xL_a^M)=\Bl^Q_q\qquad\forall\ (a,q)\in G\xx Q.\ee 
\end{enumerate}
\end{proposition}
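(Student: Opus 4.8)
The plan is to mimic the structure of the proofs of Propositions \ref{d} and \ref{e}, which have exactly the same shape: one shows that the prescribed formula \er{11} produces an object of the required type, then that every such object arises this way, and finally handles the equivariance in part (ii) by a direct manipulation of the defining identities.

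For part (i), I would first check that $J_q$ defined by \er{11} satisfies the three axioms \er{7}, \er{8}, \er{9} of a complexion. For \er{7}: since $\Bl^Q_q\circ d_q\lp$ takes values in $\hL$ and $(d_eL_q^Q)\hL$ is the vertical subspace $ker(d_q\lp)$, the correction term is killed by $d_q\lp$, so $(d_q\lp)J_q = (d_q\lp)J^0_q = j_x(d_q\lp)$. For \er{9} (equivalently the right-invariant reformulation $J_q(d_eL_q^Q)\lb = (d_eL_q^Q)(\F{-1}\lb)$): the correction term vanishes on vertical vectors because $d_q\lp$ does, so $J_q$ and $J^0_q$ agree on the vertical subspace and \er{9} is inherited from $J^0$. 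For \er{8}: using $\lp\circ R_b^Q = \lp$ one gets $(d_q\lp) = (d_{qb}\lp)(d_qR_b^Q)$, and using the type-$Ad_H$ right-invariance $\Bl^Q_{qb} = Ad_{b^{-1}}^H\Bl^Q_q$ together with the intertwining of $d_eL^Q$ with $R_b^Q$ under $Ad$, one matches $(d_qR_b^Q)$ applied to the correction term at $q$ with the correction term at $qb$. Conversely, given any complexion $J$, set $\Bl^Q_q := (d_eL_q^Q)^{-1}\big((J_q - J^0_q)|_{\text{horizontal lift}}\big)$ more precisely: since $J_q - J^0_q$ kills vertical vectors (both satisfy \er{9}, hence agree on the vertical subspace) and lands in the vertical subspace (both satisfy \er{7}, so the difference projects to $0$ under $d_q\lp$), it factors as $(d_eL_q^Q)\circ(\text{something})\circ d_q\lp$; one checks the ``something'' is a $(0,1)$-form of type $Ad_H$ using \er{8} of both $J$ and $J^0$.

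For part (ii), assume $J$ is invariant and run the same computation as in Proposition \ref{d}(ii): from $\lp\circ L_a^Q = L_a^M\circ\lp$ we get $(d_{aq}\lp)(d_qL_a^Q) = (d_xL_a^M)(d_q\lp)$, and also $d_qL_a^Q$ intertwines $d_eL^Q_q$ with $d_eL^Q_{aq}$. Then
$$(d_eL^Q_q)(\Bl^Q_q\circ d_q\lp) = J_q - J^0_q = (d_qL_a^Q)^{-1}(J_{aq} - J^0_{aq})(d_qL_a^Q) = (d_eL^Q_q)(\Bl^Q_{aq}(d_xL_a^M)\circ d_q\lp),$$
and since $d_eL^Q_q$ is injective and $d_q\lp$ is surjective, \er{10} follows. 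The converse reverses this. The only genuine care-point — and the step I expect to be the main obstacle — is the bookkeeping in part (i) verifying \er{8}: one must correctly track how $d_eL_q^Q$, $d_qR_b^Q$, $Ad_b^H$ and the type-$Ad_H$ transformation law of $\Bl$ interact (this is the same commutation $L_a^Q\circ R_b^Q = R_b^Q\circ L_a^Q$ and $d_{qb}L_{qb}^Q = d_qR_b^Q\circ d_eL_q^Q\circ Ad_b^H$ type identity used implicitly in Section 2), since a sign or inverse error there would be invisible elsewhere. Everything else is routine linear algebra on the splitting $T_qQ = T_q^{\lT}Q \oplus ker(d_q\lp)$ analogous to what was already done twice above.
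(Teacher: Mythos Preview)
Your outline follows the paper's argument almost exactly, but there is one genuine gap: you never address the condition $J_q^2=-id$. A complexion is by definition an \emph{almost complex structure} satisfying \er{7}--\er{9}, so in the forward direction you must verify that $J_q$ squares to $-id$, and in the converse direction you must use this to pin down the type of $\Bl$. This is precisely where the $(0,1)$-condition enters. Writing $D_q:=J_q-J_q^0=(d_eL_q^Q)\Bl^Q_q(d_q\lp)$, one has $D_q^2=0$ since $(d_q\lp)(d_eL_q^Q)=0$, and hence
\[
J_q^2-(J_q^0)^2 \;=\; J_q^0D_q+D_qJ_q^0 \;=\; (d_eL_q^Q)\bigl(\sqrt{-1}\,\Bl^Q_q+\Bl^Q_q\,j_x\bigr)(d_q\lp),
\]
using \er{9} (in its vertical form) on the left factor and \er{7} on the right. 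Thus $J_q^2=-id$ holds if and only if $\Bl^Q_q\,j_x=-\sqrt{-1}\,\Bl^Q_q$, i.e.\ $\Bl$ is of type $(0,1)$.

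You attribute the $(0,1)$-property to \er{8}, but \er{8} only gives the right-$H$-equivariance $\Bl^Q_{qb}=Ad_{b^{-1}}^H\Bl^Q_q$, i.e.\ the ``type $Ad_H$'' part; it says nothing about anti-linearity. Without the square computation above, your forward-direction $J$ is not yet shown to be an almost complex structure, and your converse-direction $\Bl$ is only a $1$-form of type $Ad_H$, not a $(0,1)$-form. Everything else in your proposal---the factoring of $J-J^0$ through $d_q\lp$ and $d_eL_q^Q$, the check of \er{8} via $R_b^Q\circ L_q^Q=L_{qb}^Q\circ I_{b^{-1}}^H$, and the argument for part (ii)---is correct and matches the paper.
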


\begin{proof} By \er{7} we have $$(d_q\lp)(J_q-J_q^0)\,=\,(j_x-j_x)(d_q\lp)
\,=\, 0\, .$$ Thus
$Image(J_q-J_q^0)\ic Ker(d_q\lp).$ Since
$d_eL_q^Q:\hL\to Ker(d_q\lp)$ is an isomorphism, there exists a $1$-form $\lQ_q:T_qQ\to\hL$ such that
$$J_q-J_q^0=(d_eL_q^Q)\lQ_q.$$
For $b\in H$ we have $R_b^Q\circ L_q^Q=L_{qb}^Q\circ I_{b^{-1}}^H$, and hence $(d_qR_b^Q)(d_eL_q^Q)=(d_eL_{qb}^Q)Ad_{b^{-1}}^H.$ It follows that
$$(d_eL_{qb}^Q)\lQ_{qb}(d_qR_b^Q)\,=\,(J_{qb}-J_{qb}^0)(d_qR_b^Q)\,=\,
(d_qR_b^Q)(J_q-J_q^0)
$$
$$
=\, (d_qR_b^Q)(d_eL_q^Q)\lQ_q
\, =\, (d_eL_{qb}^Q)Ad_{b^{-1}}^H\lQ_q\, .$$
Therefore, we have $\lQ_{qb}(d_qR_b^Q)=Ad_{b^{-1}}^H\lQ_q,$ so $\lQ$ is pseudo-tensorial. For $\lb\in\hL$ we have
$$J_q(d_eL_q^Q)\lb=(d_eL_q^Q)(\F{-1}\lb)=J_q^0(d_eL_q^Q)\lb.$$
Therefore, we have $(J_q-J_q^0)|_{Ker(d_q\lp)}=0$, which implies that $\lQ$ is tensorial. Hence there exists a unique $1$-form 
$\Bl$ of type $Ad_H$ such that $\lQ_q=\Bl^Q_q(d_q\lp).$ We also have $(J_q-J_q^0)^2=0$ and hence
$$(d_eL_q^Q)(\Bl^Q_q\,j_x+\F{-1}\Bl^Q_q)(d_q\lp)=(d_eL_q^Q)\Bl^Q_q\,j_x(d_q\lp)+(d_eL_q^Q)\F{-1}\Bl^Q_q(d_q\lp)$$
$$=(d_eL_q^Q)\Bl^Q_q(d_q\lp)J_q^0+J_q^0(d_eL_q^Q)\Bl^Q_q(d_q\lp)=(J_q-J_q^0)J_q^0+J_q^0(J_q-J_q^0)$$
$$=J_q^2-(J_q-J_q^0)^2-(J_q^0)^2=J_q^2-(J_q^0)^2=-id+id=0.$$
Since $d_eL_q^Q$ is invertible, we obtain
$$\Bl^Q_qj_x+\F{-1}\Bl^Q_q=0.$$
Thus $\Bl$ is of type $(0,1)$.

For part (ii), let $J$ be invariant. Using $L_a^Q\circ L_q^Q=L_{aq}^Q$ and $L_a^M\circ\lp=\lp\circ L_a^Q$ we obtain
$$(d_eL_{aq}^Q)\Bl^Q_{aq}(d_xL_a^M)(d_q\lp)=(d_eL_{aq}^Q)\Bl^Q_{aq}(d_{aq}\lp)(d_qL_a^Q)=(J_{aq}-J_{aq}^0)(d_qL_a^Q)$$
$$=(d_qL_a^Q)(J_q-J_q^0)=(d_qL_a^Q)(d_eL_q^Q)\Bl^Q_q(d_q\lp)=(d_eL_{aq}^Q)\Bl^Q_q(d_q\lp).$$
Since $d_eL_{aq}^Q$ is invertible, \er{10} follows. The converse is proved in a similar way.
\end{proof}
If \er{11} holds, we say that $J$ is related to $J^0$ via $\Bl.$ There is a close relationship between (invariant) connexions and complexions: Every connexion $\lT$ on $Q$ induces a unique complexion $J^\lT$ which is ``horizontal'' in the sense that
$$J_q^\lT:T^\lT_qQ\to T^\lT_qQ\qquad\forall\ q\in Q.$$ 
In fact, for vertical tangent vectors we have 
\be{31}J^\lT_q(d_eL_q^Q)\lb=(d_eL_q^Q)(\F{-1}\lb)\qquad\forall\lb\in\hL,\ee 
and if $Y\in T^\lT_qQ$ is horizontal, then $J^\lT_q(Y)\in T^\lT_qQ$ is uniquely
determined by the condition $(d_q\lp)J^\lT_q(Y)=j_x(d_q\lp)(Y).$ If $\lT$ is invariant, then the associated complexion $J^\lT$ is also invariant. 

\begin{proposition}\label{b}
\mbox{}
\begin{enumerate}
\item[(i)] If a connexion $\lT$ is related to $\lT^0$ via $\Cl,$ then the induced complexion $J^\lT$ is related to $J^0$ via $\Bl,$ where
\be{12}\Bl_x=\sqrt{-1}\Cl_x-\Cl_x\,j_x\qquad\forall\ x\in M.\ee 

\item[(ii)] $\lT$ and $\lT^0$ induce the same complexion if and only if $\Cl$ is a $(1,0)$-form, i.e., $\Cl^Q_q:T_xM\to\hL$ is $\mathbb C$-linear.\\

\item[(iii)] If $J^0$ is induced by a connexion $\lT^0$ and $J$ is related to $J^0$ via
$\Bl,$ then $J$ is induced by the connexion 
$$\lT_q=\lT^0_q-\f{\sqrt{-1}}2\Bl^Q_q(d_q\lp)$$
which is related to $\lT^0$ via $-\f{\sqrt{-1}}2\Bl.$
\end{enumerate}
\end{proposition}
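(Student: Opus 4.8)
The plan is to prove the three parts in order, using Proposition~\ref{b}(i) as the computational engine and Proposition~\ref{i} (uniqueness of the $(0,1)$-form representing a complexion) together with Proposition~\ref{d} (uniqueness of the $1$-form representing a connexion) to translate statements about complexions/connexions into statements about forms.

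For part (i): I would first recall that $J^0$ (the complexion induced by $\lT^0$) and $J^\lT$ (the complexion induced by $\lT$) are both complexions on $Q$, so by Proposition~\ref{i}(i) there is a unique $(0,1)$-form $\Bl$ of type $Ad_H$ with $J^\lT_q = J^0_q + (d_eL^Q_q)(\Bl^Q_q \circ d_q\lp)$. The task is to identify $\Bl^Q_q$ with $\sqrt{-1}\,\Cl^Q_q - \Cl^Q_q\, j_x$. I would work at a point $q\in Q_x$ and evaluate both complexions on a tangent vector $X\in T_qQ$, splitting $X$ into its $\lT^0$-horizontal part $X^{\lT^0}$ and vertical part. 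On vertical vectors both complexions agree (they both act by $\sqrt{-1}$ on $\hL$ via $d_eL^Q_q$), so $\Bl^Q_q\circ d_q\lp$ annihilates the vertical space, consistent with it being pulled back from $M$. On $X^{\lT^0}$: by definition of the induced complexions, $(d_q\lp)J^0_q X^{\lT^0} = j_x(d_q\lp)X^{\lT^0}$ and the $\lT^0$-horizontal projection of $J^0_q X^{\lT^0}$ has zero $\lT^0$-connexion form. Meanwhile $J^\lT_q X^{\lT^0}$ also projects to $j_x(d_q\lp)X^{\lT^0}$, but its horizontal lift is taken with respect to $\lT$, not $\lT^0$; the discrepancy between the $\lT$-horizontal and $\lT^0$-horizontal lifts of a given vector $v\in T_xM$ is exactly $(d_eL^Q_q)(\Cl^Q_q v)$, by \eqref{6}. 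Carrying this bookkeeping through — applying $J^0_q$ after correcting $X^{\lT^0}$ into a $\lT$-horizontal vector, then correcting back — produces the two terms: the $\sqrt{-1}\,\Cl^Q_q$ term comes from the vertical part of $J^\lT_q$ acting through $\Cl^Q_q(d_q\lp)X^{\lT^0}$, i.e. from \eqref{31}, and the $-\Cl^Q_q\, j_x$ term comes from the difference of horizontal lifts applied to the already $j_x$-rotated projection $j_x(d_q\lp)X^{\lT^0}$. This is the step I expect to be the main obstacle: it is not deep, but one must be careful to keep track of \emph{which} connexion each horizontal lift is taken with respect to, and the natural temptation is to conflate $\lT$- and $\lT^0$-horizontal projections. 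A clean way to organize it is to verify the formula \eqref{11} directly: compute $(d_eL^Q_q)(\sqrt{-1}\Cl^Q_q - \Cl^Q_q j_x)(d_q\lp)X$ and check that adding it to $J^0_q X$ yields a map satisfying the three defining identities \eqref{7}, \eqref{8}, \eqref{9} of the complexion $J^\lT$, invoking uniqueness of the horizontal complexion induced by $\lT$.

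For part (ii): $\lT$ and $\lT^0$ induce the same complexion iff $\Bl = 0$, and by part (i) this means $\sqrt{-1}\,\Cl^Q_q v = \Cl^Q_q(j_x v)$ for all $v\in T_xM$, which is precisely the statement that $\Cl^Q_q : T_xM\to\hL$ is $\mathbb C$-linear (where $\hL$ carries multiplication by $\sqrt{-1}$ and $T_xM$ carries $j_x$), i.e. that $\Cl$ is a $(1,0)$-form. Here I should note that $\Bl=0$ genuinely characterizes ``same complexion'' because of the uniqueness clause in Proposition~\ref{i}(i); this is immediate.

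For part (iii): given $\Bl$ a $(0,1)$-form of type $Ad_H$, set $\lT_q := \lT^0_q - \tfrac{\sqrt{-1}}{2}\Bl^Q_q(d_q\lp)$. Since $-\tfrac{\sqrt{-1}}{2}\Bl$ is a $1$-form of type $Ad_H$, Proposition~\ref{d}(i) guarantees $\lT$ is a connexion $1$-form related to $\lT^0$ via $\Cl := -\tfrac{\sqrt{-1}}{2}\Bl$. Applying part (i) to this $\Cl$, the induced complexion $J^\lT$ is related to $J^0$ via the $(0,1)$-form $\sqrt{-1}\,\Cl_x - \Cl_x j_x = -\tfrac{\sqrt{-1}}{2}(\sqrt{-1}\,\Bl_x) - (-\tfrac{\sqrt{-1}}{2}\Bl_x) j_x = \tfrac12\Bl_x + \tfrac{\sqrt{-1}}{2}\Bl_x j_x$. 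Since $\Bl$ is of type $(0,1)$, we have $\Bl_x j_x = -\sqrt{-1}\,\Bl_x$ (the defining relation $\Bl^Q_q j_x + \sqrt{-1}\Bl^Q_q = 0$ derived in the proof of Proposition~\ref{i}), so the above equals $\tfrac12\Bl_x + \tfrac{\sqrt{-1}}{2}(-\sqrt{-1}\,\Bl_x) = \tfrac12\Bl_x + \tfrac12\Bl_x = \Bl_x$. Hence $J^\lT$ is related to $J^0$ via $\Bl$, and by the uniqueness in Proposition~\ref{i}(i), $J^\lT = J$, as claimed. The invariant case in all three parts follows automatically from the ``invariant iff the associated form is invariant'' clauses (Propositions~\ref{d}(ii), \ref{i}(ii)) together with the fact that the correspondences \eqref{12} between $\Cl$ and $\Bl$ are $G$-equivariant, so no separate argument is needed.
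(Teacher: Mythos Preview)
Your proposal is correct and follows essentially the same approach as the paper. For part (i) the paper, like you, reduces to $\lT^0$-horizontal vectors $X$ and then tracks the discrepancy between the $\lT$- and $\lT^0$-horizontal lifts: it writes $X-(d_eL_q^Q)\Cl^Q_q(d_q\lp)X$ as the $\lT$-horizontal vector, observes that both $J^\lT_qX-(d_eL_q^Q)\sqrt{-1}\Cl^Q_q(d_q\lp)X$ and $J_q^0X-(d_eL_q^Q)\Cl^Q_q(d_q\lp)J_q^0X$ are $\lT$-horizontal with the same projection, and equates them---exactly the ``bookkeeping'' you outline. Parts (ii) and (iii) are handled identically to your argument, the paper merely stating that $\Cl_x:=-\tfrac{\sqrt{-1}}2\Bl_x$ recovers $\Bl_x$ via \eqref{12} without writing out the antilinearity computation you spell out.
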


\begin{proof} In order to check \er{11} on a tangent vector $X\in T_qQ,$ we may assume that $X$ is $\lT^0$-horizontal, since for vertical vectors of the form $X=(d_eL_q^Q)\lb,\,\lb\in\hL,$ both sides of \er{11} agree. Thus assume that $\lT_q^0X=0.$ 
Then $\lT_qX=\Cl^Q_q(d_q\lp)X.$ Since $J^0$ is induced by $\lT^0,$ it follows that $J_q^0X$ is also $\lT^0$-horizontal, and hence $\lT_qJ_q^0X=\Cl^Q_q(d_q\lp)J_q^0X.$
Therefore, $$X-(d_eL_q^Q)\Cl^Q_q(d_q\lp)X=X-(d_eL_q^Q)\lT_qX$$ is $\lT$-horizontal. It follows that 
$$U:=J^\lT_qX-(d_eL_q^Q)i\Cl^Q_q(d_q\lp)X=J^\lT_q\(X-(d_eL_q^Q)\Cl^Q_q(d_q\lp)X\),$$ 
$$V:=J_q^0X-(d_eL_q^Q)\Cl^Q_q(d_q\lp)J_q^0X=J_q^0X-(d_eL_q^Q)\lT_qJ_q^0X$$ 
are both $\lT$-horizontal. Since $(d_q\lp)U=j_x(d_q\lp)X=(d_q\lp)V$, it follows that
$U=V$, i.e., 
$$J^\lT_qX-(d_eL_q^Q)\sqrt{-1}\Cl^Q_q(d_q\lp)X\,=\,
J_q^0X-(d_eL_q^Q)\Cl^Q_q(d_q\lp)J_q^0X\, .$$
Therefore, we have
$$(d_eL_q^Q)\Bl^Q_q(d_q\lp)X\,=\,J^\lT_qX-J_q^0X
$$
$$
=\, (d_eL_q^Q)\(\sqrt{-1}\Cl^Q_q(d_q\lp)X-
\Cl^Q_q(d_q\lp)J_q^0X\)\, .$$
Since $d_eL_q^Q$ is injective, using \er{7} it follows that
$$\Bl^Q_q(d_q\lp)X=\sqrt{-1}\Cl^Q_q(d_q\lp)X-\Cl^Q_q(d_q\lp)J_q^0X=
(\sqrt{-1}\Cl^Q_q-\Cl^Q_qj_x)(d_q\lp)X.$$
We now have $\Bl^Q_q=\sqrt{-1}\Cl^Q_q-\Cl^Q_qj_x$ on $T_xM$ because
$d_q\lp$ is surjective. This proves part (i).

Part (ii) is a direct consequence, since $\Bl=0$ if and only
if $\Cl$ is $\mathbb C$-linear.

For part (iii), since $\Bl^Q_q:T_xM\to\hL$ is
$\mathbb C$-antilinear, the $1$-form $\Cl_x:=-\f{\sqrt{-1}}2\,\Bl_x$ yields $\Bl_x$ via \er{12}.
\end{proof}

In the hermitian case, we can sharpen the correspondence as follows:

\begin{proposition}\label{j}
\mbox{}
\begin{enumerate}
\item[(i)] For a hermitian $H$-bundle $(Q,P),$ a complexion $J$ on $Q$ is induced by
a unique hermitian connexion $\li\lX,$ with $\lX$ being a connexion on $P.$ Thus there is a
$1-1$ correspondence between complexions on $Q,$ connexions
on $P,$ and hermitian connexions on $Q$.

\item[(ii)] In the equivariant case, $J$ is invariant if and only if $\lX$
(equivalently, $\li\lX$) is invariant.
\end{enumerate}
\end{proposition}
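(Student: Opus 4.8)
The plan is to build on the two correspondences already established: connexions $\lT$ on $Q$ correspond to complexions $J^\lT$ via Proposition~\ref{b}, and connexions $\lX$ on $P$ correspond bijectively to hermitian connexions $\li\lX$ on $Q$ by the discussion preceding the statement (following \cite[Proposition II.6.2]{KN}). So the content of part~(i) is: given an arbitrary complexion $J$ on $Q$, among all connexions $\lT$ with $J^\lT = J$ there is exactly one that is hermitian. Existence: first I would pick \emph{some} connexion $\lT^0$ on $Q$ with $J^{\lT^0}=J$ — this exists because every complexion is of the form \er{11} and, by Proposition~\ref{b}(iii), the $\Bl$ appearing there is realized by the connexion $\lT^0_q - \tfrac{\sqrt{-1}}{2}\Bl^Q_q(d_q\lp)$ starting from any connexion that induces $J^0$; alternatively observe directly that a complexion always induces a horizontal complexion structure and reverse-engineer a connexion. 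Then I would correct $\lT^0$ by a $(1,0)$-form of type $Ad_H$ to land inside the subbundle $P$: by Proposition~\ref{b}(ii), adding a $(1,0)$-form $\Cl$ does not change the induced complexion, so I must show that the space of connexions inducing $J$ — an affine space over $(1,0)$-forms of type $Ad_H$ — contains a hermitian one.

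The key step, then, is to produce the right $(1,0)$-form $\Cl$. Here I would use the extension/restriction description of hermitian connexions: $\li\lX$ is characterized by $(\li\lX)_p|_{T_pP} = \lX_p$ for $p\in P$, equivalently its connexion form takes values in $\Ll$ on $T_pP$. Write $\hL = \Ll \oplus \mL$ for an $Ad_L$-invariant complement $\mL$ (using that $L$ is the maximal compact of the complex group $H$, so $\hL = \Ll \oplus \sqrt{-1}\,\Ll$ with $\mL = \sqrt{-1}\,\Ll$, which is $Ad_L$-stable). Given the complexion $J$, I would show that there is a canonical connexion inducing it whose form on $P$ can be split, and that the $\mL$-component of its restriction to $T_pP$ is of type $(1,0)$ (this is where condition \er{9}, tying $J$ to multiplication by $\sqrt{-1}$ in the fibre direction, and the $\sqrt{-1}\,\Ll$ structure of $\mL$ enter decisively). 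Subtracting the corresponding $(1,0)$-form of type $Ad_H$ — extended from $P$ to $Q$ via the $\li$ construction for forms described before Proposition~\ref{e} — yields a connexion whose form restricts to $T_pP$ with values in $\Ll$, i.e. a hermitian connexion, and which still induces $J$ by Proposition~\ref{b}(ii).

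For uniqueness I would argue that if $\li\lX$ and $\li\lX'$ are two hermitian connexions inducing the same complexion $J$, then by Proposition~\ref{b}(ii) they differ by a $(1,0)$-form $\Cl$ of type $Ad_H$; but restricted to $P$ both have $\Ll$-valued forms, forcing $\li\Al := \Cl$ to be $\Ll$-valued on $P$ and of type $(1,0)$ for the complex structure $j$ on $M$. Simultaneously, $\Cl = \li\Al$ being the $\li$-extension of a form of type $Ad_L$ means its value lies in $\Ll$ everywhere along $P$; a $(1,0)$-form that is $\Ll$-valued must vanish, because $\Ll$ is a real form of the complex space $\hL$ that contains no nonzero complex subspace — concretely, if $\Cl^P_p(jv) = \sqrt{-1}\,\Cl^P_p(v)$ lies in $\Ll$ for all $v$, then $\Cl^P_p(v)$ lies in $\Ll \cap \sqrt{-1}\,\Ll = 0$. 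Hence $\Cl = 0$ and $\li\lX = \li\lX'$. Part~(ii) is then immediate: invariance of $J$ corresponds to invariance of $\Bl$ by Proposition~\ref{i}(ii), invariance of $\lX$ to invariance of $\Al$ by Proposition~\ref{e}(ii), and invariance of $\lX$ is equivalent to invariance of $\li\lX$ as noted in the text; chasing the correspondence of part~(i) through these equivalences — using that the correcting $(1,0)$-form is canonical, hence $G$-equivariant whenever $J$ is — gives the result. The main obstacle I anticipate is the existence argument: pinning down that the $\mL$-component of the natural connexion form along $P$ is genuinely of type $(1,0)$, which requires carefully combining \er{7}, \er{9}, and the identification $\mL = \sqrt{-1}\,\Ll$; the uniqueness and the invariance bookkeeping are comparatively routine.
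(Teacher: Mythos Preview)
Your uniqueness argument and the invariance bookkeeping for part~(ii) are correct and essentially match the paper. The gap is in the existence step: the claim you flag as the ``main obstacle''---that the $\sqrt{-1}\,\Ll$-component of the natural connexion form along $P$ is of type $(1,0)$---is not merely hard to verify, it is false in general. Start from a hermitian connexion $\li\lX^0$ with associated complexion $J^0$; the given $J$ is related to $J^0$ via a unique $(0,1)$-form $\Bl$, and by Proposition~\ref{b}(iii) the connexion $\lT:=\li\lX^0-\tfrac{\sqrt{-1}}{2}\Bl$ induces $J$. Using $\hL=\Ll\oplus\sqrt{-1}\,\Ll$ and the anti-linearity of $\Bl$, there is a unique $1$-form $\Al$ of type $Ad_L$ with $\Bl_x v=\sqrt{-1}\,(\li\Al)_x v-(\li\Al)_x(j_xv)$; the $\sqrt{-1}\,\Ll$-component of $\lT$ along $P$ then computes to $v\mapsto\tfrac{\sqrt{-1}}{2}\Al^P_p(j_xv)$. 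For this to be of type $(1,0)$ one would need $\sqrt{-1}\,\Al^P_p(v)=\Al^P_p(j_xv)$ for all $v$, and since the two sides lie in $\sqrt{-1}\,\Ll$ and $\Ll$ respectively this forces $\Al=0$, i.e.\ $J=J^0$. So for a generic complexion the $(1,0)$ claim fails and you cannot simply subtract the $\sqrt{-1}\,\Ll$-component.

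The paper avoids this detour: it extracts $\Al$ directly from $\Bl$ via the decomposition~\er{19}, sets $\lX:=\lX^0+\Al$ on $P$, and observes that $\li\lX$ is related to $\li\lX^0$ via $\li\Al$; Proposition~\ref{b}(i) with $\Cl=\li\Al$ then shows that $J^{\li\lX}$ is related to $J^0$ via $\sqrt{-1}\,\li\Al-\li\Al\,j_x=\Bl$, whence $J^{\li\lX}=J$. Your route can be salvaged---the correct $(1,0)$-form to subtract is $v\mapsto-\tfrac12(\li\Al)_x(v)+\tfrac{\sqrt{-1}}{2}(\li\Al)_x(j_xv)$, not the raw $\sqrt{-1}\,\Ll$-component---but identifying it amounts to recovering $\Al$ anyway, so the paper's direct construction is shorter and free of the false intermediate claim.
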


\begin{proof} Let $\lX^0$ be a connexion on $P$, and let $J^0$ be the complexion on $Q$
induced by $\li\lX^0.$ The complexion $J$ is related to $J^0$ via a unique $(0,1)$-form
$\Bl$ of type $Ad_H.$ Since the Lie algebra splitting
\be{13}\hL=\Ll\oplus\,\sqrt{-1}\Ll\ee 
is $Ad_L$-invariant, the anti-linear map $\Bl_x:T_xM\to(Q\xx_H\hL)_x$ has a unique decomposition
\be{19}\Bl_xv=\sqrt{-1}(\li_x\Al_x)v-(\li_x\Al_x)j_xv=\sqrt{-1}(\li\Al)_xv-(\li\Al)_xj_xv\qquad\forall\ x\in M,\ee
where $\Al_x:T_xM\to(P\xx_L\Ll)_x$ is a $1$-form of type $Ad_L.$ Let $\lX$ be the
connexion on $P$ related to $\lX^0$ via $\Al.$ By Proposition \ref{b},
the complexion $J$ is induced by the hermitian connexion 
$$(\li\lX^0)_q+(\li\Al)^Q_q\circ d_q\lp=(\li\lX)_q.$$ 

For uniqueness, suppose that for two connexions $\lX^0$ and $\lX$ on $P$, the extensions induce the same complexion. The connexion 
$\lX$ is related to $\lX^0$ via a unique $1$-form $\Al$ of type $Ad_L.$ Then $\li\lX$ is
related to $\li\lX^0$ via $\li\Al$, and both induce the same complexion. Proposition
\ref{b} implies that $\li\Al$ is a $(1,0)$-form of type $Ad_H.$ Since $\Al^P_p$ is
$\Ll$-valued, it follows that $\Al_p^P=0.$ Hence $\Al=0$ and $\lX=\lX^0.$ As $\lX$ is invariant if and only if the complexion induced by $\li\lX$ is invariant, the assertion (ii) follows.
\end{proof}

\section{Homogeneous connexions and complexions}

We now turn to the homogeneous case. By Theorem \ref{a} we may assume that the
homogeneous $H$-bundle $Q$ is given by $Q=G\xx_{K,f}H$ for a homomorphism $f:K\to H$,
with $f(K)\ic L$ in the hermitian case. We fix an $Ad_K$-invariant splitting
\be{14}\gL=\kL\oplus\mL.\ee
Thus $[\kL,\mL]\ic\mL$ but not necessarily $[\mL,\mL]\ic\kL.$ Let $P_\kL:\gL\to\kL$ and
$P_\mL:\gL\to\mL$ denote the projection maps given by \eqref{14}.

\begin{definition}
\mbox{}
\begin{enumerate}
\item[(i)] The {\bf tautological} connexion $\lT^0$ on $Q=G\xx_{K,f}H$ is defined by the horizontal subspaces
$$T_{g|h}^0Q\,:=\, \{\<(d_eL^G_g)\Lt|0_h\>:\,\Lt\in\mL\}
$$
$$
=\, \{\<(d_eL^G_g)\lg|(d_eR^H_h)\lh\>:\,\lg\in\gL,\,\lh\in\hL,\,(d_ef)P_\kL\lg+\lh=0\}$$
$$=\{\<\d g|\d h\>:\,\d g\in T_gG,\,\d h\in T_hH,\,
(d_ef)P_\kL(d_eL^G_g)^{-1}\d g+(d_eR^H_h)^{-1}\d h=0\}.$$

\item[(ii)] For a hermitian homogeneous $H$-bundle $(Q,P)$, the tautological connexion is
the extension of the ``tautological'' connexion $\lX^0$ on $P$ with horizontal subspaces
$$T_{g|\el}^0P:=\{\<(d_eL^G_g)\Lt|0_\el\>\,\mid\,\Lt\in\mL\}\qquad\forall\ \<g|\el\>\in P=G\xx_{K,f}L.$$
\end{enumerate}
\end{definition}

The equivalence of the various realizations of the horizontal subspaces follows from the definition. Under right translations, we have
$$(d_{g|h}R_b^Q)\<\d g|\d h\>=\dl_t^0\,\<g_t|h_t b\>=\<\d g|(d_hR^H_b)\d h\>.$$
It follows that $(T_{g|h}R_b^Q)T_{g|h}^0Q=T_{\<g|hb\>}^0Q,$ showing that $\lT^0$ is indeed a connexion on $Q.$ Since
$$(d_{g|h}L_a^Q)\<\d g|\d h\>=\<(d_gL^G_a)\d g|\d h\>,$$ it follows that
$$(d_{g|h}L_a^Q)\<(d_eL^G_g)\Lt|0_h\>=\<(d_gL^G_a)(d_eL^G_g)\Lt|0_h\>=(d_eL^G_{ag}\Lt|0_h\>.$$
Thus $\lT^0$ is an invariant connexion. In the hermitian case, the connexion $\lX^0$ on $P$ is also invariant.

\begin{proposition}\label{c}
The connexion $1$-form of the tautological connection is given by 
$$\lT^0_{g|h}\<(d_eL^G_g)\lg|(d_eL_h^H)\lh\>=Ad_{h^{-1}}^H(d_ef)P_\kL\lg+\lh\qquad\forall\ (\lg,\lh)\in\gL\xx\hL.$$ 
In particular, $\lT^0_{e|e}\<\lg|\lh\>=(d_ef)P_\kL\lg+\lh.$
\end{proposition}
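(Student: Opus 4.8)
The plan is to compute the connexion $1$-form of $\lT^0$ directly from its defining property, namely that for $X\in T_qQ$ one has the horizontal projection $X^{\lT^0}=X-(d_eL_q^Q)(\lT^0_q(X))$, so that $\lT^0_q$ is the unique $\hL$-valued $1$-form vanishing on $T^0_qQ$ and restricting to the identity on the vertical subspace under the identification $(d_eL_q^Q):\hL\to\ker(d_q\lp)$. Concretely, at $q=\<g|h\>$ I would take a general tangent vector written in the form $\<(d_eL^G_g)\lg|(d_eL_h^H)\lh\>$ with $(\lg,\lh)\in\gL\xx\hL$ (such expressions exhaust $T_{g|h}Q$ by \er{30}) and split it into its horizontal and vertical components.

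First I would recall from the definition of $\lT^0$ that $\<(d_eL^G_g)\lg|(d_eL_h^H)\lh\>$ is horizontal precisely when $(d_ef)P_\kL\lg+\lh=0$, after rewriting $(d_eL_h^H)\lh=(d_eR^H_h)Ad_h^H\lh$ so as to match the second realization of $T^0_{g|h}Q$ (which is phrased in terms of $(d_eR^H_h)$); thus the horizontal condition on $\lg,\lh$ in $L$-coordinates reads $(d_ef)P_\kL\lg+Ad_h^H\lh=0$. Next I would write the given vector as a sum of a horizontal vector and a vertical vector of the form $(d_eL_q^Q)\lb=\<0_g|(d_eL_h^H)\lb\>$: setting $\lb:=Ad_{h^{-1}}^H\big((d_ef)P_\kL\lg+Ad_h^H\lh\big)=Ad_{h^{-1}}^H(d_ef)P_\kL\lg+\lh$, one checks using \er{30} that $\<(d_eL^G_g)\lg|(d_eL_h^H)\lh\>-\<0_g|(d_eL_h^H)\lb\>=\<(d_eL^G_g)\lg|(d_eL_h^H)(\lh-\lb)\>$ lies in $T^0_{g|h}Q$, because $(d_ef)P_\kL\lg+Ad_h^H(\lh-\lb)=(d_ef)P_\kL\lg-(d_ef)P_\kL\lg=0$ after substituting $Ad_h^H\lb=(d_ef)P_\kL\lg+Ad_h^H\lh$. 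Hence by the defining property of the connexion form, $\lT^0_{g|h}\<(d_eL^G_g)\lg|(d_eL_h^H)\lh\>=\lb=Ad_{h^{-1}}^H(d_ef)P_\kL\lg+\lh$, which is the claimed formula; the special case $h=e$ gives $\lT^0_{e|e}\<\lg|\lh\>=(d_ef)P_\kL\lg+\lh$.

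The only genuine subtlety — the step I'd flag as the main obstacle — is bookkeeping the two parametrizations of tangent vectors in $T_{g|h}Q$, via $(d_eL_h^H)$ versus $(d_eR^H_h)$, and matching them against the two descriptions of $T^0_{g|h}Q$ in the definition; the identity $(d_eL_h^H)\lh=(d_eR^H_h)Ad_h^H\lh$ converts between them, and it is exactly this $Ad_h^H$ that produces the $Ad_{h^{-1}}^H$ twist in the final formula. One should also verify consistency with the equivalence $\<\d g|\d h\>=\<\d g-(d_eL^G_g)\lk\,|\,\d h+(d_eR^H_h)(d_ef)\lk\>$ from \er{30}: replacing $\lg$ by $\lg-P_\kL\lk$ contribution and correspondingly adjusting $\lh$ leaves $Ad_{h^{-1}}^H(d_ef)P_\kL\lg+\lh$ unchanged, since $P_\kL(d_eL^G_g)^{-1}$ applied to the shift removes exactly the $\lk$-term while the shift in $\lh$ adds it back — so the formula is well-defined on equivalence classes. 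Finally, since $\lT^0$ was already shown to be an invariant connexion, no further invariance check is needed here; the proposition is purely the explicit evaluation just described.
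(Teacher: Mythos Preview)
Your proof is correct and follows essentially the same approach as the paper: both arguments split the given tangent vector into its horizontal and vertical parts and read off $\lb$ from the vertical component. The only cosmetic difference is that the paper applies the equivalence \er{30} with $\lk=P_\kL\lg$ to rewrite the representative so that the horizontal part appears directly in the simplest form $\<(d_eL^G_g)P_\mL\lg\,|\,0_h\>$, whereas you subtract a candidate vertical vector and verify horizontality via the second description of $T^0_{g|h}Q$; the underlying computation and the appearance of the $Ad_{h^{-1}}^H$ twist are identical.
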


\begin{proof} Applying \er{30} to $\lk=P_\kL\lg$ we conclude that
$$\<(d_eL^G_g)\lg|(d_eL_h^H)\lh\>=\<(d_eL^G_g)P_\mL\lg|(d_eR^H_h)(d_ef)P_\kL\lg+(d_eL_h^H)\lh\>.$$
$$=\<(d_eL^G_g)P_\mL\lg|0_h\>+\<0_g|(d_eR^H_h)(d_ef)P_\kL\lg+(d_eL_h^H)\lh\>.$$
This gives the direct sum decomposition into horizontal and vertical components, and 
$$\lb:=\lT^0_{g|h}\<(d_eL^G_g)\lg|(d_eL_h^H)\lh\>\in\hL$$ is determined by the condition
$$\<0_g|(d_eR^H_h)(d_ef)P_\kL\lg+(d_eL_h^H)\lh\>\,=\,(d_eL_{g|h})\lb
$$
$$
=\, \dl_t^0\,\<g|h\>b_t\,=\,\dl_t^0\,\<g|hb_t\>\,=\,\<0|(d_eL_h^H)\lb\>\, .$$
Therefore, we have $\lb=(d_eL_h^H)^{-1}(d_eR^H_h)(d_ef)P_\kL\lg+\lh=Ad_{h^{-1}}^H(d_ef)P_\kL\lg+\lh.$
\end{proof} 

In the homogeneous case, invariance can be checked at the base point $o\in M.$ An $i$-linear map $\cl:\W^i T_oM\to\hL$ will be called $f$-{\bf covariant} if 
$$Ad_{f(k)}^H\circ\cl=\cl\circ(d_oL_k^M)\qquad\forall\ k\in K.$$ 
Every such map induces an invariant $i$-form $\Cl$ of type $Ad_H$ via
$$\Cl_{go}(d_oL_g^M)v=[\<g|h\>,Ad_h^{-1}\cl v]$$
for all $(g,h)\in Q=G\xx_{K,f}H$ and $v\in\W^i T_oM.$ Equivalently, the homogeneous lift satisfies
\be{16}\Cl^Q_{g|h}(d_oL_g^M)=Ad_{h^{-1}}^H\circ\cl.\ee
Conversely, for any invariant $i$-form $\Cl$ of type $Ad_H$, the map $\cl:=\Cl_{e|e}^Q:\W^i T_oM\to\hL$ is $f$-covariant. If the homomorphism $f$ is replaced by the conjugate $f'=I_\el\circ f,$ then $\cl':\W^i T_oM\to\hL$ is $f'$-covariant if and only if $\cl:=Ad_{\el^{-1}}^H\circ\cl'$ is $f$-covariant. 
In the homogeneous case Proposition \ref{d} yields the following:

\begin{proposition}\label{f}
Let $\cl:T_oM\to\hL$ be a linear map which is $f$-covariant,
i.e., $$Ad_{f(k)}^H\circ\cl=\cl\circ(d_oL_k^M)$$ for all $k\in K.$ Then the $1$-form $\Cl$ of type $Ad_H$ defined by 
$$\Cl^Q_{g|h}(d_oL_g^M)=Ad_{h^{-1}}^H\circ\cl,\qquad\forall\ (g,h)\in G\xx H$$ 
is invariant. Hence the connexion $\lT$ on $Q=G\xx_{K,f}H,$ related to the tautological
connexion $\lT^0$ via $\Cl,$ is invariant, and also every invariant connexion is of this form.
\end{proposition}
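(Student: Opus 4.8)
The plan is to reduce everything to the already-proved Proposition \ref{d}(ii) by a careful translation between the two descriptions of invariant $1$-forms of type $Ad_H$: the abstract one (satisfying \er{4}) and the concrete one in terms of an $f$-covariant linear map $\cl:T_oM\to\hL$. First I would verify that the formula $\Cl^Q_{g|h}(d_oL_g^M)=Ad_{h^{-1}}^H\circ\cl$ is well-defined, i.e. that it respects the equivalence relation \er{1} defining $Q=G\xx_{K,f}H$. This is exactly where $f$-covariance is used: replacing $\<g|h\>$ by $\<gk^{-1}|f(k)h\>$ changes the left side to $\Cl^Q_{gk^{-1}|f(k)h}(d_oL_{gk^{-1}}^M)$, and one must check this equals $Ad_{(f(k)h)^{-1}}^H\circ\cl\circ(d_oL_{k^{-1}}^M)\circ(d_oL_k^M)\ldots$; unwinding, the relation $Ad_{f(k)}^H\circ\cl=\cl\circ(d_oL_k^M)$ is precisely the cocycle condition that makes the two formulas agree. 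One also checks the right-invariance $\Cl^Q_{qb}=Ad_{b^{-1}}^H\Cl^Q_q$ holds automatically from the formula, so $\Cl$ really is an $i$-form of type $Ad_H$ in the sense defined before Proposition \ref{d}.

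Next I would prove that this $\Cl$ is invariant in the sense of \er{4}, i.e. $\Cl^Q_{aq}(d_xL_a^M)=\Cl^Q_q$ for all $a\in G$. Writing $q=\<g|h\>$ so that $x=g(o)$ and $aq=\<ag|h\>$, and using $L_a^M\circ L_g^M=L_{ag}^M$ at the level of differentials, this is a one-line computation: $\Cl^Q_{ag|h}(d_xL_a^M)(d_oL_g^M)=\Cl^Q_{ag|h}(d_oL_{ag}^M)=Ad_{h^{-1}}^H\circ\cl=\Cl^Q_{g|h}(d_oL_g^M)$, and since $d_oL_g^M$ is an isomorphism $T_oM\to T_xM$ we may cancel it. Having established that $\Cl$ is invariant, Proposition \ref{d}(ii) applies directly (with the invariant tautological connexion $\lT^0$ of the previous section playing the role of the reference connexion $\lT^0$ there): the connexion $\lT$ related to $\lT^0$ via $\Cl$ is invariant.

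For the converse — every invariant connexion arises this way — I would start from an arbitrary invariant connexion $\lT$. By Proposition \ref{d}(i), $\lT$ is related to the tautological $\lT^0$ via a \emph{unique} $1$-form $\Cl$ of type $Ad_H$, and by Proposition \ref{d}(ii) this $\Cl$ is invariant. It then remains to see that an invariant $\Cl$ is recovered from an $f$-covariant $\cl$ by the stated formula: set $\cl:=\Cl^Q_{e|e}$, which (as noted in the text preceding the proposition) is $f$-covariant because of the defining relation \er{2}, namely $k\<e|e\>=\<e|f(k)\>$ forces $\Cl^Q_{e|e}(d_oL_k^M)=\Cl^Q_{k|e}(d_oL_k^M)=Ad_{f(k)}^H\Cl^Q_{e|f(k)}(d_oL_k^M)$ combined with invariance of $\Cl$; carefully chasing through gives $Ad_{f(k)}^H\circ\cl=\cl\circ(d_oL_k^M)$. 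Then invariance of $\Cl$ together with right-invariance of type $Ad_H$ shows $\Cl^Q_{g|h}(d_oL_g^M)=Ad_{h^{-1}}^H\Cl^Q_{g|e}(d_oL_g^M)=Ad_{h^{-1}}^H\Cl^Q_{e|e}=Ad_{h^{-1}}^H\circ\cl$, which is the asserted formula. The only mildly delicate point — the main obstacle, such as it is — is keeping the several base-point identifications straight when verifying the cocycle/$f$-covariance condition in both directions; everything else is bookkeeping on top of Proposition \ref{d} and the discussion of $f$-covariance that immediately precedes the statement.
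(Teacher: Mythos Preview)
Your proposal is correct and follows essentially the same route as the paper: the paper does not give a separate proof of this proposition but simply states that ``In the homogeneous case Proposition \ref{d} yields the following,'' relying on the paragraph immediately before (which sets up the correspondence between $f$-covariant maps $\cl$ and invariant forms $\Cl$ via \er{16}, and notes that conversely $\cl:=\Cl^Q_{e|e}$ is $f$-covariant). Your write-up spells out exactly those bookkeeping steps---well-definedness from $f$-covariance, invariance from $L_a^M\circ L_g^M=L_{ag}^M$, and the converse by evaluating at $\<e|e\>$---so there is no substantive difference.
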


In this case we say that $\lT$ is generated by $\cl,$ the choice of the (invariant) 
tautological connexion $\lT^0$ being understood. In the hermitian homogeneous case 
Proposition \ref{e} yields the following:

\begin{proposition}\label{g} Let $\al:T_oM\to\Ll$ be a linear map which is $f$-covariant,
i.e., satisfies $Ad_{f(k)}^L\circ\al=\al\circ(d_oL_k^M)$ for all $k\in K.$ Then the $1$-form $\Al$ of type $Ad_L$ defined by 
$$\Al^Q_{g|\el}(d_oL_g^M)=Ad_{\el^{-1}}^L\circ\al\qquad\forall\ (g,\el)\in G\xx L,$$ 
is invariant. Hence the connexion $\lX$ on $P=G\xx_{K,f}\Ll$, related to the tautological connexion $\lX^0$ via $\Al,$ is invariant, and every invariant connexion is of this form.
\end{proposition}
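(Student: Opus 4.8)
The plan is to deduce Proposition \ref{g} from the general equivariant statement of Proposition \ref{e}(ii) by exactly the same mechanism used to pass from Proposition \ref{d} to Proposition \ref{f}, the only change being that $\hL$ is replaced by $\Ll$, $H$ by $L$, and $Q$ by $P = G\xx_{K,f}L$. First I would recall that by hypothesis $f(K)\ic L$, so $P = G\xx_{K,f}L$ is a well-defined hermitian homogeneous principal bundle (Proposition \ref{prop1}), the tautological connexion $\lX^0$ on $P$ is invariant (established just before Proposition \ref{c}), and a connexion $\lX$ on $P$ is obtained from $\lX^0$ by adding an $Ad_L$-form $\Al$ as in \er{18}.

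The first real step is to check that the prescription
$$\Al^P_{g|\el}(d_oL_g^M) = Ad_{\el^{-1}}^L\circ\al \qquad\forall\ (g,\el)\in G\xx L$$
actually defines a $1$-form of type $Ad_L$, i.e. that it is consistent across the equivalence relation $\<g|\el\> = \<gk^{-1}|f(k)\el\>$ defining $P$. Replacing $(g,\el)$ by $(gk^{-1},f(k)\el)$ one must verify that $Ad_{(f(k)\el)^{-1}}^L\circ\al\circ(d_oL_{gk^{-1}}^M) = Ad_{\el^{-1}}^L\circ\al\circ(d_oL_g^M)$ on $T_{gk^{-1}o}M = T_{go}M$; since $L_{gk^{-1}}^M = L_g^M\circ L_{k^{-1}}^M$ and $d_oL_{k^{-1}}^M = (d_oL_k^M)^{-1}$, this reduces precisely to the $f$-covariance relation $Ad_{f(k)}^L\circ\al = \al\circ(d_oL_k^M)$. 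The right-invariance property \er{20}, $\Al^P_{p\el} = Ad_{\el^{-1}}^L\Al^P_p$, is then immediate from the formula, so $\Al$ is a genuine $1$-form of type $Ad_L$.

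The second step is to show that this $\Al$ is invariant in the sense of \er{5}, namely $\Al^P_{ap}(d_xL_a^M) = \Al^P_p$ for all $(a,p)\in G\xx P$. Writing $p = \<g|\el\>$ and using $L_a^P\<g|\el\> = \<ag|\el\>$ together with $d_{go}L_a^M\circ d_oL_g^M = d_oL_{ag}^M$, one gets $\Al^P_{\<ag|\el\>}(d_{go}L_a^M)(d_oL_g^M) = Ad_{\el^{-1}}^L\circ\al\circ(d_oL_{ag}^M)^{-1}\circ(d_oL_{ag}^M)$ — wait, more carefully: $\Al^P_{\<ag|\el\>}$ is characterized by $\Al^P_{\<ag|\el\>}(d_oL_{ag}^M) = Ad_{\el^{-1}}^L\circ\al$, hence $\Al^P_{\<ag|\el\>}(d_{go}L_a^M)(d_oL_g^M) = \Al^P_{\<ag|\el\>}(d_oL_{ag}^M) = Ad_{\el^{-1}}^L\circ\al = \Al^P_{\<g|\el\>}(d_oL_g^M)$, and since $d_oL_g^M$ is an isomorphism $T_oM\to T_{go}M$ this yields $\Al^P_{\<ag|\el\>}(d_{go}L_a^M) = \Al^P_{\<g|\el\>}$, which is \er{5}. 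Then Proposition \ref{e}(ii) gives at once that the connexion $\lX$ related to the invariant tautological connexion $\lX^0$ via $\Al$ is invariant.

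For the converse — every invariant connexion on $P$ arises this way — I would run the argument backwards: given an invariant connexion $\lX$, Proposition \ref{e}(i) produces a $1$-form $\Al$ of type $Ad_L$ with $\lX$ related to $\lX^0$ via $\Al$, and Proposition \ref{e}(ii) forces $\Al$ to be invariant; then set $\al := \Al^P_{\<e|e\>}:T_oM\to\Ll$. Invariance \er{5} applied with $p = \<e|e\>$, $a = k\in K$ (note $k\<e|e\> = \<k|e\> = \<e|f(k)\>$ by \er{2}, so $L_k^P$ fixes the fibre $P_o$ and acts on it) gives $Ad_{f(k)^{-1}}^L\circ\al\circ(d_oL_k^M) = \al$, i.e. $\al$ is $f$-covariant; and the general formula $\Al^P_{\<g|\el\>}(d_oL_g^M) = Ad_{\el^{-1}}^L\circ\al$ is recovered by combining invariance (to move from $\<e|e\>$ to $\<g|e\>$) with the right-invariance \er{20} (to move from $\<g|e\>$ to $\<g|\el\>$). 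The routine verifications here are entirely parallel to those already carried out in the discussion preceding Proposition \ref{f}; the only point requiring a moment's care is the compatibility check across the defining equivalence relation of $P$ in the first step, but since $f(K)\ic L$ this is literally the computation behind \er{16} with $H$ replaced by $L$. I do not expect any genuine obstacle: the proposition is the hermitian-bundle restriction of Proposition \ref{f}, and the whole content is that all the structure maps ($\lX^0$ tautological, the splitting \er{14}, the $f$-covariance condition) restrict compatibly from $H$ to $L$.
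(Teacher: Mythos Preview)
Your proposal is correct and follows exactly the approach the paper intends: the paper itself offers no separate proof of Proposition \ref{g} but simply introduces it with the remark ``In the hermitian homogeneous case Proposition \ref{e} yields the following,'' i.e.\ it is the $L$-valued restriction of the argument for Proposition \ref{f}. Your write-up supplies the routine verifications (well-definedness on equivalence classes via $f$-covariance, invariance \er{5} from the chain rule $d_oL_{ag}^M = (d_{go}L_a^M)(d_oL_g^M)$, and the converse via $\al:=\Al^P_{\<e|e\>}$) that the paper leaves implicit, with no deviation in method.
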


In this case we say that $\lX$ is generated by $\al,$ the choice of the (invariant) tautological connexion $\lX^0$ being understood. Since $\li\lX^0=\lT^0,$ the invariant connexion $\li\lX$ is generated by the $f$-covariant map $\li\circ\al:T_oM\to\hL.$
Combining Propositions \ref{f} and \ref{g} with Theorem \ref{a} we obtain the following:

\begin{theorem}\label{h}
\mbox{}
\begin{enumerate}
\item[(i)] The homogeneous $H$-bundles $Q$ endowed with an invariant connexion are in $1-1$ 
correspondence with pairs $(f,\cl)$ consisting of a homomorphism $f:K\to H$ and an 
$f$-covariant linear map $\cl:T_oM\to\hL,$ modulo the equivalence 
$(f,\cl)\os(I_\el^H\circ f,Ad_\el^H\,\cl),$ where $\el\in H$ is arbitrary.\\

\item[(ii)] The hermitian homogeneous $H$-bundles $(P,Q)$ endowed with an invariant hermitian 
connexion are in $1-1$ correspondence with pairs $(f,\al)$ consisting of a 
homomorphism $f:K\to L$ and an $f$-covariant linear map $\al:T_oM\to\Ll,$ modulo the 
equivalence $(f,\al)\os(I_\el^L\circ f,Ad_\el^H\,\al),$ where $\el\in L$ is arbitrary.

\end{enumerate}
\end{theorem}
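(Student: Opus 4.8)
The plan is to assemble Theorem~\ref{h} by combining the three structural results already established: Theorem~\ref{a}, which parametrizes the bundles themselves, and Propositions~\ref{f} and~\ref{g}, which parametrize the invariant connexions once a presentation $Q=G\xx_{K,f}H$ has been fixed. First I would treat part~(i). Given a homogeneous $H$-bundle $Q$ with an invariant connexion $\lT$, I would invoke Theorem~\ref{a} to choose a base point $\ol\in Q_o$ and obtain the homomorphism $f=f_\ol:K\to H$ together with the identification $G\xx_{K,f}H\cong Q$. Transporting $\lT$ through this isomorphism and using the (invariant) tautological connexion $\lT^0$ on $G\xx_{K,f}H$ as reference, Proposition~\ref{f} gives a unique $f$-covariant linear map $\cl:T_oM\to\hL$ with $\lT$ generated by $\cl$. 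This assigns to $(Q,\lT)$ a pair $(f,\cl)$. Conversely, any such pair $(f,\cl)$ produces $Q=G\xx_{K,f}H$ and, via Proposition~\ref{f}, an invariant connexion generated by $\cl$; these two assignments are mutually inverse once a base point is fixed.

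The only remaining issue — and the main (though routine) obstacle — is to verify that the map $(Q,\lT)\mapsto(f,\cl)$ is well-defined at the level of \emph{isomorphism classes}, i.e.\ that changing the base point $\ol$ changes $(f,\cl)$ precisely by the stated equivalence $(f,\cl)\os(I_\el^H\circ f,\ Ad_\el^H\,\cl)$. For this I would take a second base point $\ol'=\ol\el^{-1}$ with $\el\in H$. Theorem~\ref{a} already records that $f_{\ol'}=I_\el^H\circ f_\ol$. To track the effect on $\cl$, recall that $\cl=\Cl^Q_{e|e}$ where $\Cl$ is the $Ad_H$-type $1$-form relating $\lT$ to $\lT^0$; the $H$-bundle isomorphism $G\xx_{K,f}H\to G\xx_{K,I_\el^H\circ f}H$ from the discussion preceding Theorem~\ref{a} sends $\<g|h\>_f$ to $\<g|I_\el^Hh\>_{I_\el^H\circ f}$, and under this the base point $\<e|e\>$ moves to $\<e|\el\>$. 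Since the homogeneous lift of an $Ad_H$-type form transforms by $\Cl^Q_{g|h\el}=Ad_{\el^{-1}}^H\Cl^Q_{g|h}$, reading off the value at the new base point multiplies $\cl$ by $Ad_\el^H$ — matching the displayed equivalence. (The same bookkeeping is the content of the remark that $\cl'$ is $f'$-covariant iff $Ad_{\el^{-1}}^H\cl'$ is $f$-covariant.) Both tautological connexions being invariant, no compatibility of connexions is lost in the transport.

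Part~(ii) is proved by the identical argument with $H$ replaced by $L$ throughout: Theorem~\ref{a} gives the homomorphism $f:K\to L$ unique up to conjugation by $\el\in L$, Proposition~\ref{g} gives the unique $f$-covariant $\al:T_oM\to\Ll$ generating the invariant hermitian connexion $\lX$ on $P$ (equivalently $\li\lX$ on $Q$), and the base-point-change computation, now carried out inside $L$ using \er{20}, yields the equivalence $(f,\al)\os(I_\el^L\circ f,\ Ad_\el^H\,\al)$ with $\el\in L$. Here one should note that although $\al$ is $\Ll$-valued, the conjugation $Ad_\el^H$ for $\el\in L$ preserves $\Ll$ and restricts to $Ad_\el^L$, so writing $Ad_\el^H\,\al$ is consistent; the passage from $\lX$ to the hermitian connexion $\li\lX$ on $Q$ introduces nothing new, since $\li\circ\al$ is the generating map of $\li\lX$ and $\li$ intertwines the two adjoint actions. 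Thus the whole statement follows by bookkeeping on top of the cited results, the only delicate point being the correct identification of how the generating map transforms under a change of base point.
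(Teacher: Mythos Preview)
Your proposal is correct and follows exactly the paper's approach: the paper gives no separate proof of Theorem~\ref{h}, merely stating that it follows by combining Theorem~\ref{a} with Propositions~\ref{f} and~\ref{g}, and your argument supplies precisely that combination together with the bookkeeping for the equivalence relation that the paper leaves implicit.

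One small slip worth fixing: the isomorphism $\<g|h\>_f\mapsto\<g|I_\el^Hh\>_{f'}$ you quote from the discussion before Theorem~\ref{a} sends $\<e|e\>$ to $\<e|e\>$ (since $I_\el^H e=e$), not to $\<e|\el\>$. The map you actually need for tracking the base-point change $\ol\mapsto\ol'=\ol\el^{-1}$ is the composite of the two identifications \er{3}, namely $\<g|h\>_f\mapsto g\ol h=g\ol'(\el h)\mapsto\<g|\el h\>_{f'}$; this intertwines the two tautological connexions and sends $\<e|e\>$ to $\<e|\el\>$, after which your use of \er{16} to read off $\cl'=Ad_\el^H\cl$ is correct.
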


In the complex homogeneous case, Proposition \ref{i} yields the following:

\begin{proposition}\label{k}
Let $\bl:T_oM\to\hL$ be an anti-linear map which is $f$-covariant, i.e.,
$$Ad_{f(k)}^H\circ\bl=\bl\circ(d_oL_k^M),\qquad \forall\ k\in K.$$
Then the $(0,1)$-form $\Bl$ of type $Ad_H$ defined by 
$$\Bl^Q_{g|h}(d_oL_g^M)=Ad_{h^{-1}}^H\circ\bl\qquad\forall\ (g,h)\in G\xx H$$ 
is invariant. Hence the complexion $J$ on $Q=G\xx_{K,f}H,$ related to the tautological complexion $J^0$ via $\Bl,$ is invariant, and every invariant complexion $J$ is of this form.
\end{proposition}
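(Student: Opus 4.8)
The plan is to imitate exactly the pattern used in Propositions \ref{f} and \ref{g}, the only difference being that $\cl$ is now an anti-linear (rather than linear) map and the governing object is a $(0,1)$-form rather than an ordinary $1$-form. First I would verify that the formula $\Bl^Q_{g|h}(d_oL_g^M)=Ad_{h^{-1}}^H\circ\bl$ indeed defines a $(0,1)$-form of type $Ad_H$: one checks it is consistent with the equivalence relation \er{1} using the $f$-covariance of $\bl$ (the same computation as in the construction of $\Cl$ from $\cl$), and one checks the right-invariance property $\Bl^Q_{qb}=Ad_{b^{-1}}^H\Bl^Q_q$ directly from the formula, writing $\<g|hb\>$ in place of $\<g|h\>$. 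The fact that $\Bl$ is of type $(0,1)$, i.e.\ $\Bl^Q_q:T_xM\to\hL$ is $\mathbb C$-antilinear, is inherited pointwise from the anti-linearity of $\bl=\Bl^Q_{e|e}$ together with the left-invariance of $j$: for $x=go$ we have $\Bl^Q_{g|h}j_x=Ad_{h^{-1}}^H\bl(d_oL_g^M)^{-1}j_{go}(d_oL_g^M)=Ad_{h^{-1}}^H\bl\,j_o=-\sqrt{-1}\,Ad_{h^{-1}}^H\bl$, as required.

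Next I would establish invariance of $\Bl$ in the sense of \er{10}. This is the homogeneous analogue of Proposition \ref{d}(ii)/\ref{e}(ii), and the argument is identical to the one implicit in Proposition \ref{f}: since $G$ acts transitively it suffices to check the identity $\Bl^Q_{aq}(d_xL_a^M)=\Bl^Q_q$ for $q=\<g|h\>$ with $x=go$, and both sides reduce, via the defining formula and the cocycle identity $L_a^M\circ L_g^M=L_{ag}^M$, to $Ad_{h^{-1}}^H\circ\bl\circ(d_oL_g^M)^{-1}$. Concretely, $\Bl^Q_{a\<g|h\>}(d_xL_a^M)=\Bl^Q_{\<ag|h\>}(d_xL_a^M)=Ad_{h^{-1}}^H\bl(d_oL_{ag}^M)^{-1}(d_{go}L_a^M)=Ad_{h^{-1}}^H\bl(d_oL_g^M)^{-1}=\Bl^Q_{\<g|h\>}$, using $(d_oL_{ag}^M)=(d_{go}L_a^M)(d_oL_g^M)$. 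Having shown $\Bl$ is an invariant $(0,1)$-form of type $Ad_H$, Proposition \ref{i}(i) produces the complexion $J$ on $Q$ related to the tautological (invariant) complexion $J^0$ via $\Bl$, and Proposition \ref{i}(ii) gives that $J$ is invariant because $\Bl$ is. Here I use the (already noted) fact that the tautological connexion $\lT^0$ is invariant, hence so is the complexion $J^0=J^{\lT^0}$ it induces.

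Finally, for the converse ("every invariant complexion is of this form") I would invoke Proposition \ref{i}(i) to write an arbitrary complexion $J$ as related to $J^0$ via a unique $(0,1)$-form $\Bl$ of type $Ad_H$, then Proposition \ref{i}(ii) to conclude that $J$ invariant forces $\Bl$ invariant, and then observe — as in the remark preceding Proposition \ref{f} — that an invariant $(0,1)$-form $\Bl$ of type $Ad_H$ is recovered from the anti-linear map $\bl:=\Bl^Q_{e|e}:T_oM\to\hL$, which is $f$-covariant precisely because $\Bl$ is invariant (evaluate \er{10} at $q=\<e|e\>$, $a=k\in K$, and use $k\<e|e\>=\<e|f(k)\>$ together with the right-invariance of $\Bl^Q$). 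This closes the loop.

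I do not anticipate a genuine obstacle: the statement is the "complex structure" member of a triad of completely parallel results (connexions, hermitian connexions, complexions), and its proof is a verbatim transcription of the proofs of Propositions \ref{f} and \ref{g} with "$\mathbb C$-linear" replaced by "$\mathbb C$-antilinear" and Proposition \ref{d} replaced by Proposition \ref{i}. If anything requires a moment's care it is the bookkeeping that the formula for $\Bl^Q$ is well-defined on equivalence classes $\<g|h\>=\<gk^{-1}|f(k)h\>$ — but this is exactly where $f$-covariance enters and is the same check already performed (silently) for $\cl$ in the paragraph introducing $f$-covariant maps.
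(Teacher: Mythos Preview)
Your proposal is correct and matches the paper's approach exactly: the paper does not write out a separate proof but simply prefaces the statement with ``In the complex homogeneous case, Proposition \ref{i} yields the following,'' treating it as the complexion analogue of Propositions \ref{f} and \ref{g}. Your write-up is precisely the routine unpacking of that remark --- checking well-definedness via $f$-covariance, anti-linearity via left-invariance of $j$, invariance via the cocycle identity for $L^M$, and then invoking Proposition \ref{i} in both directions --- so there is nothing to add or correct.
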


In this case we say that $J$ is generated by $\bl,$ the choice of the (invariant)
tautological complexion $J^0$ being understood.
Using the $K$-invariant inverse map $T_oM\ni v\mapsto\t v\in\mL$ of $d_eR_o^M,$ the
tautological complexion $J^0$ has the value
$$J^0_{e|e}(\<\t v|0_e\>)=\<(j_o\,v)^\os|0_e\>$$
at the base point. For a general invariant complexion $J$ generated by $\bl:T_oM\to\hL$ we
have $$L_{e|e}^Qh=\<e|e\>h=\<e|h\>$$ and hence $d_{e|e}L_{e|e}^Q\lh=\<0_e|\lh\>.$ Therefore 
$$J_{e|e}(\<\t v|0_e\>)-J^0_{e|e}(\<\t v|0_e\>)\,=\,(d_{e|e}L_{e|e}^Q)\bl(d_{e|e}\lp)
(\<\t v|0_e\>)
$$
$$
=\, (d_{e|e}L_{e|e}^Q)\bl v=\<0_e|\bl\,v\>$$ 
and hence
$$J_{e|e}(\<\t v|0_e\>)=\<(j_o\,v)^\os|0_e\>+\<0_e|\bl\,v\>=\<(j_o\,v)^\os|\bl\,v\>.$$
All other values can be computed via $G\xx H$-invariance. In the complex homogeneous case Proposition \ref{b} yields the following:

\begin{proposition}\label{l}
\mbox{}
\begin{enumerate}
\item[(i)] If an invariant connexion $\lT$ is generated by $\cl$, then the induced invariant complexion $J^\lT$ is generated by
\be{25}\bl=\sqrt{-1}\cl-\cl\,j_o.\ee 

\item[(ii)] $\lT$ induces the tautological complexion if and only if $\cl:T_oM\to\hL$ is
$\mathbb C$-linear.\\

\item[(iii)] If the invariant complexion $J$ is generated by $\bl,$ then $J$ is induced
by the invariant connexion $\lT$ generated by $-\f{\sqrt{-1}}2\bl.$
\end{enumerate}
\end{proposition}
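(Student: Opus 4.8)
The plan is to reduce everything to the corresponding non-homogeneous statements already established in Proposition \ref{b}, using the dictionary between $f$-covariant maps and invariant forms that was set up just before Proposition \ref{f}. Recall that an invariant connexion $\lT$ generated by $\cl$ means precisely that $\lT$ is related to the tautological connexion $\lT^0$ via the invariant $1$-form $\Cl$ of type $Ad_H$ whose homogeneous lift satisfies $\Cl^Q_{g|h}(d_oL_g^M)=Ad_{h^{-1}}^H\circ\cl$; in particular $\cl=\Cl^Q_{e|e}$. Likewise, an invariant complexion $J$ generated by $\bl$ means $J$ is related to the tautological complexion $J^0$ via the invariant $(0,1)$-form $\Bl$ of type $Ad_H$ with $\bl=\Bl^Q_{e|e}$.

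For part (i), I would start from Proposition \ref{b}(i): since $\lT$ is related to $\lT^0$ via $\Cl$, the complexion $J^\lT$ is related to $J^0$ via the $(0,1)$-form $\Bl$ with $\Bl_x=\sqrt{-1}\,\Cl_x-\Cl_x\,j_x$ for all $x\in M$. Evaluating the homogeneous lifts at the base point $q=\<e|e\>$ (so $x=o$) and using $\Cl^Q_{e|e}=\cl$, $\Bl^Q_{e|e}=\bl$, together with the left-invariance property $(d_oL_g^M)j_o=j_{go}(d_oL_g^M)$ which guarantees $\Cl_x\,j_x$ transports correctly, one reads off $\bl=\sqrt{-1}\,\cl-\cl\,j_o$, which is \er{25}. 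The only point to check is that $J^0$, the tautological complexion on $Q=G\xx_{K,f}H$, is indeed the horizontal complexion induced by the tautological connexion $\lT^0$ in the sense of \er{31} — this is immediate from the definitions, since on vertical vectors $\lr^Q_\lb$ it acts by $\sqrt{-1}$ and $T^0_{g|h}Q$ is $J^0$-invariant with $(d\lp)J^0=j\,(d\lp)$, which is exactly how $J^{\lT^0}$ was characterized. Then part (ii) is immediate: $\bl=0$ iff $\sqrt{-1}\,\cl=\cl\,j_o$ iff $\cl$ is $\mathbb{C}$-linear on $T_oM$, which by Proposition \ref{b}(ii) is exactly the condition that $\lT$ and $\lT^0$ induce the same complexion, i.e.\ $J^\lT=J^0$.

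For part (iii), I would invoke Proposition \ref{b}(iii): if $J$ is related to $J^0$ via $\Bl$, then $J$ is induced by the connexion related to $\lT^0$ via $-\tfrac{\sqrt{-1}}{2}\Bl$. Since $\Bl$ is the invariant $1$-form generated by $\bl$, the $1$-form $-\tfrac{\sqrt{-1}}{2}\Bl$ is invariant of type $Ad_H$ with value $-\tfrac{\sqrt{-1}}{2}\bl$ at the base point, i.e.\ it is generated by $-\tfrac{\sqrt{-1}}{2}\bl$; hence the connexion it produces is, by Proposition \ref{f}, the invariant connexion $\lT$ generated by $-\tfrac{\sqrt{-1}}{2}\bl$, and this $\lT$ induces $J$. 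One should note for consistency that feeding $\cl=-\tfrac{\sqrt{-1}}{2}\bl$ back into \er{25} recovers $\sqrt{-1}\cl-\cl\,j_o=-\tfrac{\sqrt{-1}}{2}(\sqrt{-1}\bl-\bl\,j_o\sqrt{-1}\cdot\cdots)$, i.e.\ it returns $\bl$, using that $\bl$ is $\mathbb{C}$-antilinear so $\bl\,j_o=-\sqrt{-1}\,\bl$; this last identity is the one genuinely substantive computation and the only place where anti-linearity of $\bl$ is used.

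I do not expect a serious obstacle here: the entire proposition is the homogeneous shadow of Proposition \ref{b}, and the translation mechanism — evaluate invariant forms at $q=\<e|e\>$, where the homogeneous lift equals the generating $f$-covariant map — has already been justified in the paragraph preceding Proposition \ref{f} and in Propositions \ref{f}, \ref{g}, \ref{k}. The one item demanding a line of care is confirming that the tautological complexion $J^0$ of the last Definition coincides with the complexion induced by the tautological connexion $\lT^0$; granting that, parts (i)--(iii) follow by specializing \er{12} and Proposition \ref{b}(ii),(iii) to the base point and reading off values there, with $G\xx H$-invariance propagating the identities to all of $Q$.
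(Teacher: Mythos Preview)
Your proposal is correct and follows exactly the paper's approach: the paper presents Proposition~\ref{l} as an immediate consequence of Proposition~\ref{b} (introduced by the sentence ``In the complex homogeneous case Proposition~\ref{b} yields the following'') and gives no separate proof. Your write-up is simply a more explicit unpacking of that specialization, including the identification $J^0=J^{\lT^0}$ which the paper uses tacitly.
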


In the hermitian homogeneous case. there is a 1-1 correspondence \be{21}\bl 
v=\sqrt{-1}\al (v)-\al\,j_ov\qquad\forall\ v\in T_oM\ee between $f$-covariant 
anti-linear maps $\bl:T_oM\to\hL$ and $f$-covariant maps $\Al:T_oM\to\Ll,$ since $j_o$ 
commutes with $d_oL_k^M$ for all $k\in K.$ This correspondence realizes the 
correspondence between invariant complexions and invariant hermitian connexions 
addressed in Proposition \ref{j}. Combining Propositions \ref{k} and \ref{l} with 
Theorem \ref{a}, we obtain the following:

\begin{theorem}
The homogeneous $H$-bundles $Q$ (respectively, the hermitian homogeneous $H$-bundles 
$(P,Q)$) endowed with an invariant complexion $J$ are in $1-1$ correspondence with 
pairs $(f,\bl)$ consisting of a homomorphism $f:K\to H$ (respectively, $f:K\to L$) and 
an $f$-covariant anti-linear map $$\bl:T_oM\to\hL,$$ modulo the equivalence 
$(f,\bl)\os(I_\el^H\circ f,Ad_\el^H\,\bl),$ where $\el\in H$ (respectively, $\el\in 
L$) is arbitrary. In the hermitian case, we may equivalently consider pairs $(f,\al)$ 
where $\al:T_oM\to\Ll$ is any $f$-covariant $\mathbb R$-linear map.
\end{theorem}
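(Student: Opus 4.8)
The plan is to assemble this theorem from the pieces already established, since it is essentially a bookkeeping statement combining the parametrization of homogeneous bundles (Theorem \ref{a}), the parametrization of invariant complexions on a fixed bundle (Proposition \ref{k}), and the connexion-complexion dictionary (Proposition \ref{l}). First I would fix a base point $\ol\in Q_o$ (respectively $\ol\in P_o$ in the hermitian case) and invoke Theorem \ref{a} to identify $Q$ with $G\xx_{K,f_\ol}H$ for the homomorphism $f:=f_\ol:K\to H$ (respectively $f:K\to L$), canonically up to the conjugation $f\os I_\el^H\circ f$ coming from a change of base point $\ol'=\ol\el^{-1}$. This reduces the classification of pairs (homogeneous $H$-bundle, invariant complexion) to the classification, for each fixed $f$, of invariant complexions on $G\xx_{K,f}H$, together with keeping track of how everything transforms under the base-point conjugation.

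Next, on the fixed bundle $Q=G\xx_{K,f}H$ I would appeal to Proposition \ref{k}: every invariant complexion $J$ arises from the tautological complexion $J^0$ via a unique $f$-covariant anti-linear map $\bl:T_oM\to\hL$, and conversely every such $\bl$ yields an invariant complexion. This gives, for fixed $f$, a bijection between invariant complexions and $f$-covariant anti-linear maps. I would then check compatibility with the base-point conjugation: under $\ol\mapsto\ol\el^{-1}$, the bundle isomorphism $G\xx_{K,f}H\to G\xx_{K,I_\el^H\circ f}H$ described after Proposition \ref{prop1} intertwines $J^0$ with the tautological complexion of the conjugated bundle, and it transports the generating form $\bl$ to $Ad_\el^H\circ\bl$ (this is the same computation as the one recorded just before Proposition \ref{f} for $f$-covariant $i$-forms, specialized to $i=1$). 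Hence the pair $(f,\bl)$ is well-defined modulo the stated equivalence $(f,\bl)\os(I_\el^H\circ f,Ad_\el^H\,\bl)$, proving the first assertion. The hermitian version is identical with $H$ replaced by $L$ and $\hL$ by $\Ll$, using the hermitian halves of Theorem \ref{a} and the hermitian analogues; here $\el$ is restricted to $L$.

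For the last sentence of the theorem — that in the hermitian case one may equivalently use $f$-covariant $\mathbb R$-linear maps $\al:T_oM\to\Ll$ — I would cite the correspondence \er{21}, $\bl v=\sqrt{-1}\,\al(v)-\al\,j_ov$, which is a bijection between $f$-covariant anti-linear $\bl:T_oM\to\hL$ and $f$-covariant $\mathbb R$-linear $\al:T_oM\to\Ll$ (this uses the $Ad_L$-invariant splitting \er{13} and the fact that $j_o$ commutes with $d_oL_k^M$), and observe that this bijection is equivariant for the $Ad_\el^L$-action, $\el\in L$, so it descends to the quotient by the equivalence relation. Combining all of this yields the stated $1$-$1$ correspondences.

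I do not expect a genuine obstacle here: every ingredient has been proved, and the only real content is verifying that the base-point-change isomorphism acts on generating data by $\bl\mapsto Ad_\el^H\,\bl$ (respectively $\al\mapsto Ad_\el^L\,\al$). The mild subtlety worth stating carefully is that one must check the \emph{conversely} direction as well — that distinct equivalence classes $(f,\bl)$ give non-isomorphic pairs $(Q,J)$ — which again follows from the uniqueness clauses in Theorem \ref{a} and Proposition \ref{k}. So the ``hard part'' is really just making the functoriality bookkeeping explicit, and I would keep the written proof short, referring back to the cited propositions rather than redoing any computation.
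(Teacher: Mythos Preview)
Your proposal is correct and follows exactly the approach the paper takes: the paper does not even write out a proof for this theorem, merely introducing it with ``Combining Propositions \ref{k} and \ref{l} with Theorem \ref{a}, we obtain the following,'' and your proposal simply makes that combination explicit, including the base-point-change bookkeeping and the $\al\leftrightarrow\bl$ dictionary \er{21} for the hermitian clause.
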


\section{Curvature and Integrability}

For any connexion $\lT$ on a principal $H$-bundle $Q$ there exists a $2$-form $\Kl$ of type $Ad_H$ giving the {\bf curvature}
$$d\lT(X,Y)+\f12[\lT X,\lT Y]=\Kl^Q_q((d_q\lp)X,(d_q\lp)Y)\qquad\forall\ X,Y\in T_qQ.$$ 
If $\lT$ is invariant, the associated curvature form $\Kl$ is also invariant. For $\lb\in\hL$ define the right action vector field $\lr_\lb^Q$ on $Q$ by
$$(\lr_\lb^Q)_q=\dl_t^0q\exp(t\lb)=(d_eL_q^Q)\lb\qquad\forall\ q\in Q.$$ 
Let $J$ be a complexion on $Q.$ For vector fields $X,Y$ the $(0,2)$-part of the bilinear bracket $[X,Y]$ defined by 
$$N(X,Y)=[X,Y]+J[JX,Y]+J[X,JY]-[JX,JY]$$
is called the {\bf Nijenhuis tensor}. It is well-known that $J$ is integrable if and only if $N$ vanishes. 

\begin{lemma}\label{m}
Let $J$ be a complexion on $Q.$ Then $N(\lr_\lb^Q,Y)=0$ for $\lb\in\hL$ and any vector field $Y$ on $Q.$ 
\end{lemma}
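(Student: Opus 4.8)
The statement is that the Nijenhuis tensor vanishes whenever one of its arguments is a fundamental vertical vector field $\lr_\lb^Q$. Since $N$ is a tensor, it suffices to verify $N(\lr_\lb^Q, Y)_q = 0$ for each $q \in Q$ and each of two types of $Y$ that span $T_qQ$: vertical vectors $Y = \lr_\lg^Q$ with $\lg \in \hL$, and horizontal vectors $Y$ with respect to some chosen connexion. The key computational input is the bracket relation for fundamental vector fields: $[\lr_\lb^Q, \lr_\lg^Q] = \lr_{[\lb,\lg]}^Q$ (the minus sign convention aside), together with the fact that $J$ acts on vertical vectors by $J\lr_\lb^Q = \lr_{\sqrt{-1}\lb}^Q$, which is exactly condition \er{9} in the form stated right after the Definition of complexion. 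I would open by recalling these two facts.

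\textbf{The vertical-vertical case.} Here both $X = \lr_\lb^Q$ and $Y = \lr_\lg^Q$ are fundamental. Using $J\lr_\lb^Q = \lr_{\sqrt{-1}\lb}^Q$ and $[\lr_\lb^Q,\lr_\lg^Q] = \lr_{[\lb,\lg]}^Q$, each of the four terms in $N(X,Y) = [X,Y] + J[JX,Y] + J[X,JY] - [JX,JY]$ becomes a fundamental vector field associated to an explicit element of $\hL$:
\[
N(\lr_\lb^Q,\lr_\lg^Q) = \lr_{[\lb,\lg]}^Q + \lr_{\sqrt{-1}[\sqrt{-1}\lb,\lg]}^Q + \lr_{\sqrt{-1}[\lb,\sqrt{-1}\lg]}^Q - \lr_{[\sqrt{-1}\lb,\sqrt{-1}\lg]}^Q.
\]
Since $\hL$ is the Lie algebra of a complex Lie group, $\sqrt{-1}$ is $\mathbb{C}$-linear in the bracket, so the coefficients are $[\lb,\lg] + \sqrt{-1}\cdot\sqrt{-1}[\lb,\lg] + \sqrt{-1}\cdot\sqrt{-1}[\lb,\lg] - (\sqrt{-1})^2[\lb,\lg] = [\lb,\lg] - [\lb,\lg] - [\lb,\lg] + [\lb,\lg] = 0$. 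So this case is immediate.

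\textbf{The vertical-horizontal case, and the main obstacle.} For this case I would fix any connexion $\lT$ on $Q$ (for instance the tautological one in the homogeneous setting, but any connexion works) and take $Y$ to be a $\lT$-horizontal vector field; it is harmless to assume $Y$ is the horizontal lift of a vector field on $M$, since $N$ is tensorial in $Y$. The crucial observation is that the flow of $\lr_\lb^Q$ is $q \mapsto q\exp(t\lb) = R_{\exp(t\lb)}^Q q$, i.e. right translation, which by invariance of the connexion preserves horizontality: $\mathcal{L}_{\lr_\lb^Q}Y$ is again horizontal (indeed it is again the horizontal lift of the same vector field on $M$, since $R_b^Q$ covers the identity on $M$). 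Hence $[\lr_\lb^Q, Y]$ is horizontal. Likewise $JY$ is horizontal (a complexion preserves any connexion-induced horizontal complement only in the special "horizontal" case of Proposition \ref{b}, so here instead I must argue directly: $(d_q\lp)JY = j_x(d_q\lp)Y$, so $JY$ projects to $j_x$ applied to the projection of $Y$, and the $\lT$-horizontal lift of that is a horizontal vector with the right projection — but $JY$ itself need not be horizontal). This is the delicate point: $JY$ decomposes as horizontal-plus-vertical, say $JY = (JY)^h + \lr_{\mu}^Q$ for some $\hL$-valued function $\mu$, and $J\lr_\lb^Q = \lr_{\sqrt{-1}\lb}^Q$. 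One then expands all four terms of $N$, repeatedly using $[\lr_\lb^Q, Z] = \mathcal{L}_{\lr_\lb^Q}Z$ and the identities $[\lr_\lb^Q, \lr_\mu^Q] = \lr_{[\lb,\mu]}^Q + \lr_{\lr_\lb^Q(\mu)}^Q$ (the second term because $\mu$ is a function on $Q$) and the $J$-action on vertical fields. The horizontal parts cancel by the same $(\sqrt{-1})$-bookkeeping as in the vertical-vertical case applied to the projections to $M$ (where $j_x^2 = -1$), while the vertical parts cancel because $J$ is $\mathbb{C}$-linear on the vertical subspace and the $\exp(t\lb)$-flow acts on $\mu$ compatibly. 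I expect the vertical-part cancellation — carefully tracking how $\mathcal{L}_{\lr_\lb^Q}$ acts on the $\hL$-valued coefficient $\mu$ of the vertical part of $JY$, and checking that the two terms $J[JX,Y] + J[X,JY]$ produce exactly the vertical piece needed to kill $[X,Y] - [JX,JY]$ — to be the one genuinely fiddly step; everything else is formal manipulation with the two structural identities. A cleaner alternative I would consider first is to reduce entirely to the homogeneous model $Q = G\times_{K,f}H$ and evaluate $N$ at the base point $\<e|e\>$ using the explicit description of $J_{e|e}$ derived just before Proposition \ref{l}, extending $\lr_\lb^Q$ and $Y$ to $G\times H$-invariant fields; then all brackets are computed from the Lie algebra structure of $\gL \oplus \hL$ and the $\mathbb{C}$-linearity of $\sqrt{-1}$ on $\hL$, and $G\times H$-invariance propagates the vanishing to all of $Q$.
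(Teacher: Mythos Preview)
Your case-splitting strategy can be pushed through, but it is considerably more laborious than the paper's argument, and you are overlooking the one observation that makes the lemma a two-line computation. You yourself note that the flow of $\lr_\lb^Q$ is right translation $R_{b_t}^Q$ with $b_t=\exp(t\lb)$; but the relevant consequence is not that horizontality is preserved, it is that \emph{$J$ itself} commutes with this flow, by the complexion axiom \er{8}. From $J_{qb_t}(d_qR_{b_t}^Q)=(d_qR_{b_t}^Q)J_q$ one gets directly
\[
J_q[\lr_\lb^Q,Y]_q=\lim_{t\to 0}\Big(J_qY_q-(d_{qb_t}R_{b_{-t}}^Q)J_{qb_t}Y_{qb_t}\Big)=[\lr_\lb^Q,JY]_q,
\]
i.e.\ $J[\lr_\lb^Q,Y]=[\lr_\lb^Q,JY]$ for \emph{every} vector field $Y$. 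Combined with $J\lr_\lb^Q=\lr_{\sqrt{-1}\lb}^Q$ and $J^2=-\mathrm{id}$, the four terms of $N(\lr_\lb^Q,Y)$ cancel in pairs with no further work; no connexion is needed, no splitting into vertical and horizontal parts, and no reduction to a homogeneous model.

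By contrast, your vertical--horizontal case introduces an auxiliary connexion, decomposes $JY$ into horizontal and vertical pieces, and then has to track the $\hL$-valued coefficient $\mu$ of the vertical part along the flow. The cancellation you anticipate does occur, but only after one checks that $\mu(qh)=Ad_{h^{-1}}^H\mu(q)$ (which follows from right-invariance of horizontal lifts together with \er{8}) and hence $\lr_{\sqrt{-1}\lb}^Q\mu=\sqrt{-1}\,\lr_\lb^Q\mu$. So your route works, but the ``fiddly step'' you flag is precisely the place where the paper's single identity $J[\lr_\lb^Q,Y]=[\lr_\lb^Q,JY]$ would have dissolved the problem before any decomposition was needed. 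Your vertical--vertical case and the alternative homogeneous-model reduction are both unnecessary once this identity is in hand.
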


\begin{proof} Let $$b_t\,=\,\exp(t\lb)\, .$$ Applying \cite[Proposition I.1.9]{KN} to $\lr_\lb^Q=\dl_t^0\exp(R_{b_t}^Q)$ we have
$$[\lr_\lb^Q,Y]_q=\lim_{t\to 0}\,(Y_q-(d_{qb_{-t}}R_{b_t}^Q)Y_{qb_{-t}})=\lim_{t\to 0}\,(Y_q-(d_{qb_t}R_{b_{-t}}^Q)Y_{qb_t})$$
for every vector field $X$ on $Q.$ Since $J$ commutes with right translations $R_b^Q$ on $Q$ it follows that
$$J_q[\lr_\lb^Q,Y]_q\,=\,\lim_{t\to 0}\,(J_qY_q-J_q(d_{qb_t}R_{b_{-t}}^Q)Y_{qb_t})
$$
$$
=\,\lim_{t\to 0}\,(J_qY_q-(d_{qb_t}R_{b_{-t}}^Q)J_{qb_t}Y_{qb_t})\,=\,[\lr_\lb^Q,JY]_q\, .$$
Thus $J[\lr_\lb^Q,Y]=[\lr_\lb^Q,JY]$ as vector fields. Since $JJY=-Y$ and $J\lr_\lb^Q=\lr_{\F{-1}\lb}^Q,$ we obtain
$$N(\lr_\lb^Q,Y)=[\lr_\lb^Q,Y]+J[\lr_\lb^Q,JY]+J[\lr_{\F{-1}\lb}^Q,Y]-[\lr_{\F{-1}\lb}^Q,JY]$$
$$=[\lr_\lb^Q,Y]+[\lr_\lb^Q,JJY]+[\lr_{\F{-1}\lb}^Q,JY]-[\lr_{\F{-1}\lb}^Q,JY]=0.$$
\end{proof}

\begin{theorem}\label{r}
Let $(M,j)$ be integrable. Then the complexion $J^\lT$ has the Nijenhuis tensor
$$N_q(X,Y)\,=\,-2(d_eL_q^Q)\o\Kl_q^Q((d_q\lp)X,(d_q\lp)Y)\qquad \forall\ X,Y\in T_qQ\, ,$$ 
where the $(0,2)$-part $\o\Kl$ of $\Kl$ is defined by
$$\o\Kl_x(u,v)
$$
$$
=\, \Kl_x(u,v)+\sqrt{-1}\Kl_x(j_xu,v)+\sqrt{-1}\Kl_x(u,j_xv)-\Kl_x(j_xu,j_xv)\ \ \forall\ u,v\in T_xM.$$
\end{theorem}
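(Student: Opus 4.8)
The plan is to compute the Nijenhuis tensor $N_q(X,Y)$ for the horizontal complexion $J^\lT$ by reducing, via Lemma \ref{m} and bilinearity, to the case where both $X$ and $Y$ are $\lT$-horizontal lifts of (local) vector fields on $M$, and then to express the vertical component of $N_q(X,Y)$ through the curvature form $\Kl$ using the structure equation. First I would observe that, by Lemma \ref{m}, $N(\lr_\lb^Q,Y)=0$ for every fundamental vertical field $\lr_\lb^Q$; since $N$ is a tensor and every tangent vector at $q$ splits as a horizontal part plus a vertical part of the form $(d_eL_q^Q)\lb$, it suffices to evaluate $N_q(X,Y)$ when $X=\tilde u$ and $Y=\tilde v$ are the $\lT$-horizontal lifts of vector fields $u,v$ on $M$ near $x=\lp(q)$. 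In that case $\lT(\tilde u)=\lT(\tilde v)=0$, and the curvature definition $d\lT(\tilde u,\tilde v)+\f12[\lT\tilde u,\lT\tilde v]=\Kl^Q_q((d_q\lp)\tilde u,(d_q\lp)\tilde v)$ collapses to $d\lT(\tilde u,\tilde v)=\Kl^Q_q(u,v)$, and the Maurer--Cartan-type identity $d\lT(\tilde u,\tilde v)=-\f12\lT([\tilde u,\tilde v])$ (valid on horizontal fields) gives $\lT([\tilde u,\tilde v])=-2\Kl^Q_q(u,v)$, i.e.\ the vertical component of $[\tilde u,\tilde v]$ at $q$ is $-2(d_eL_q^Q)\Kl^Q_q(u,v)$.

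Next I would unwind $N(\tilde u,\tilde v)=[\tilde u,\tilde v]+J[J\tilde u,\tilde v]+J[\tilde u,J\tilde v]-[J\tilde u,J\tilde v]$. The key point is that $J^\lT$ preserves the horizontal distribution and covers $j$, so $J^\lT\tilde u$ differs from the horizontal lift $\widetilde{j u}$ only by a vertical term; but since $N$ is tensorial in each slot and $N(\text{vertical},\cdot)=0$ by Lemma \ref{m}, I may replace $J^\lT\tilde u$ by $\widetilde{ju}$ and $J^\lT\tilde v$ by $\widetilde{jv}$ throughout when computing $N$. Then each of the four brackets is a bracket of horizontal lifts, whose horizontal part is the horizontal lift of the corresponding bracket on $M$ and whose vertical part is $-2(d_eL_q^Q)\Kl^Q_q$ of the projected arguments. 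The horizontal parts assemble into the horizontal lift of the Nijenhuis tensor of $j$ on $M$, which vanishes because $(M,j)$ is integrable. What remains is the vertical part, and applying $J^\lT$ to a vertical vector $(d_eL_q^Q)\lb$ multiplies it by $\sqrt{-1}$ (by \er{31}); collecting the four vertical contributions with their signs yields
$$N_q(\tilde u,\tilde v)=-2(d_eL_q^Q)\big(\Kl^Q_q(u,v)+\sqrt{-1}\Kl^Q_q(ju,v)+\sqrt{-1}\Kl^Q_q(u,jv)-\Kl^Q_q(ju,jv)\big),$$
which is exactly $-2(d_eL_q^Q)\o\Kl^Q_q(u,v)$ with $u=(d_q\lp)X$, $v=(d_q\lp)Y$.

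The main obstacle I anticipate is the bookkeeping in the reduction step: one must verify carefully that replacing $J^\lT\tilde u$ by the horizontal lift $\widetilde{ju}$ inside the brackets is legitimate, i.e.\ that the vertical correction terms drop out. This needs the tensoriality of $N$ together with Lemma \ref{m} applied in \emph{both} slots, and a short argument that the correction is indeed a (smooth) vertical field of the form $\lr_\lb^Q$ fibrewise — or, more cleanly, the observation that $N$ descends to a well-defined bilinear map on $T_xM$ via $d_q\lp$, so one is free to evaluate it on any lifts one likes, in particular on horizontal lifts. The only other point requiring care is the sign and normalization in the structure equation relating $d\lT$ on horizontal fields to $\Kl^Q$ and to $\lT([\tilde u,\tilde v])$; with the conventions fixed earlier (the curvature written as $d\lT(X,Y)+\f12[\lT X,\lT Y]$), this produces the factor $-2$, matching the statement. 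Everything else is the routine four-term expansion and cancellation sketched above.
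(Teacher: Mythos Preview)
Your proposal is correct and follows essentially the same route as the paper: reduce via Lemma \ref{m} to horizontal lifts, use the structure equation to identify the vertical part of $[\tilde u,\tilde v]$ as $-2(d_eL_q^Q)\Kl^Q_q(u,v)$, observe that the horizontal parts assemble into the (vanishing) Nijenhuis tensor of $(M,j)$, and collect the four vertical terms using \er{31}. One remark: the ``obstacle'' you anticipate is not there. Since $J^\lT$ by construction preserves the horizontal distribution and covers $j$, one has $J^\lT\tilde u=\widetilde{ju}$ on the nose (this is exactly how the paper writes it, as $J^\lT\lx^\lT=(j\lx)^\lT$); there is no vertical correction to argue away.
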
 

\begin{proof}
Every $X\in T_qQ$ is given by $X=(\lx^\lT+\lr_\lb^Q)_q$ for some vector field $\lx$ on $M$ and $\lb\in\hL.$ Thus it suffices to consider vector fields of the form $\lx^\lT+\lr_\lb^Q.$ By Lemma \ref{m} it is enough to consider horizontal lifts of vector fields $\lx,\lh$ on $M.$ Denoting the Nijenhuis tensor of $(M,j)$ by $n(\lx,\lh),$ the integrability
assumption on $(M,j)$ implies that
\be{22}n(\lx,\lh)=[\lx,\lh]+j[j\lx,\lh]+j[\lx,j\lh]-[j\lx,j\lh]=0.\ee
{}From \cite[Corollary I.5.3]{KN} we have 
$$\lT_q[\lx^\lT,\lh^\lT]_q=-2\lO_q(\lx_q^\lT,\lh_q^\lT)=-2\Kl^Q_q(\lx_x,\lh_x).$$
By \cite[Proposition I.1.3]{KN} the horizontal part of $[\lx^\lT,\lh^\lT]_q$ coincides with $[\lx,\lh]_q^\lT.$ It follows that
$$[\lx^\lT,\lh^\lT]_q=[\lx,\lh]^\lT_q+(d_eL_q^Q)\lT_q[\lx^\lT,\lh^\lT]_q=[\lx,\lh]^\lT_q-2(d_eL_q^Q)\Kl^Q_q(\lx_x,\lh_x).$$
Using \er{31} and $J^\lT\lx^\lT=(j\lx)^\lT$ this implies that
$$J^\lT_q[\lx^\lT,\lh^\lT]_q\,=\,J^\lT_q[\lx,\lh]^\lT_q-2J^\lT_q(d_eL_q^Q)\Kl^Q_q(\lx_x,\lh_x)
$$
$$
=\,(j[\lx,\lh])^\lT_q-2(d_eL_q^Q)\sqrt{-1}\Kl^Q_q(\lx_x,\lh_x)$$
and $[J^\lT\lx^\lT,\lh^\lT]_q=[j\lx,\lh]^\lT_q-2(d_eL_q^Q)\Kl^Q_q(j\lx_x,\lh_x).$ Therefore,
we have
$$N_q(\lx_q^\lT,\lh_q^\lT)=[\lx^\lT,\lh^\lT]_q+J^\lT_q[J^\lT\lx^\lT,\lh^\lT]_q+J^\lT_q[\lx^\lT,J^\lT\lh^\lT]_q
-[J^\lT\lx^\lT,J^\lT\lh^\lT]_q$$
$$=(n(\lx,\lh))^\lT_q-2(d_eL_q^Q)\(\Kl^Q_q(\lx_x,\lh_x)+\sqrt{-1}\Kl^Q_q(j\lx_x,\lh_x)+
\sqrt{-1}\Kl^Q_q(\lx_x,j\lh_x)
$$
$$
-\Kl^Q_q(j\lx_x,j\lh_x)\)\,=\,(n(\lx,\lh))^\lT_q-2(d_eL_q^Q)\o\Kl_q^Q(\lx_x,\lh_x)\, .$$
In view of \er{22} the proof is now complete.
\end{proof}

\begin{corollary}\label{p}
If $(M,j)$ is integrable, then the complexion $J^\lT$ is integrable on $Q$ if and only if the curvature form $\Kl$ of 
$\lT$ has vanishing $(0,2)$-part. 
\end{corollary}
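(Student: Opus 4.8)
The plan is to deduce the statement at once from Theorem \ref{r} together with the Newlander--Nirenberg criterion recalled just above Lemma \ref{m}, so that essentially no new computation is needed. By that criterion, $J^\lT$ is integrable on $Q$ if and only if its Nijenhuis tensor $N$ vanishes identically; and since $N$ is tensorial, this can be checked pointwise.

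First I would invoke Theorem \ref{r}: as $(M,j)$ is assumed integrable, for every $q\in Q$ (lying over $x=\lp(q)$) and all $X,Y\in T_qQ$ one has $N_q(X,Y)=-2(d_eL_q^Q)\o\Kl_q^Q((d_q\lp)X,(d_q\lp)Y)$, where $\o\Kl$ denotes the $(0,2)$-part of the curvature $2$-form $\Kl$ of $\lT$. The map $d_eL_q^Q:\hL\to\ker(d_q\lp)\ic T_qQ$ is injective, so $N_q=0$ as a bilinear form on $T_qQ$ if and only if $\o\Kl_q^Q((d_q\lp)X,(d_q\lp)Y)=0$ for all $X,Y\in T_qQ$. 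Next, since $d_q\lp:T_qQ\to T_xM$ is surjective, this condition is equivalent to $\o\Kl_x(u,v)=0$ for all $u,v\in T_xM$. As $q$ was arbitrary and $\lp$ is surjective onto $M$, I conclude that $N\equiv 0$ if and only if $\o\Kl_x=0$ for every $x\in M$, which is precisely the assertion that the curvature form $\Kl$ has vanishing $(0,2)$-part, by the definition of $\o\Kl$ given in Theorem \ref{r}.

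I do not expect a genuine obstacle here; the entire content is in Theorem \ref{r}, and what remains is the standard fact that the Nijenhuis tensor is $C^\infty$-bilinear (so its vanishing is a pointwise condition) together with the two elementary linear-algebra reductions above, using injectivity of $d_eL_q^Q$ onto the vertical subspace and surjectivity of $d_q\lp$. The only points worth a remark are that the integrability hypothesis on $(M,j)$ is exactly what licenses the use of Theorem \ref{r} (it enters there through $n(\lx,\lh)=0$), and that $\o\Kl$ is itself a well-defined $(0,2)$-form of type $Ad_H$, so that ``vanishing of the $(0,2)$-part'' is independent of the chosen point $q$ in the fibre over $x$ — this is already implicit in writing $\o\Kl_x$.
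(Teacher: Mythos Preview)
Your argument is correct and is exactly the intended deduction: the paper states Corollary \ref{p} without proof because it is immediate from Theorem \ref{r}, using injectivity of $d_eL_q^Q$ and surjectivity of $d_q\lp$ precisely as you describe. There is nothing to add.
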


In the homogeneous case the curvature form $\Kl$ of an invariant connexion $\lT$ is induced by a unique $f$-covariant map $\kl:\W^2 T_oM\to\hL$ as in \er{16}.

\begin{theorem}\label{n}
The curvature $\Kl^0$ of the tautological connection $\lT^0$ satisfies the equation
$$2\kl^0(u,v)=-(d_ef)P_\kL[\t u,\t v]\qquad\forall\ u,v\in T_oM,$$ 
where $\t u\in\mL$ is uniquely determined by $(d_eR_o^M)\t u=u.$
\end{theorem}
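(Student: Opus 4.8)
The plan is to compute the curvature of the tautological connexion $\lT^0$ directly at the base point and then use homogeneity (invariance) to identify the $f$-covariant generating map $\kl^0$. By Proposition \ref{c} we have the explicit formula $\lT^0_{e|e}\<\lg|\lh\>=(d_ef)P_\kL\lg+\lh$, so the tautological connexion form at $\<e|e\>$ is $\lg\mapsto (d_ef)P_\kL\lg$ on the $G$-part and the identity on the $\hL$-part. The strategy is to evaluate the curvature formula $d\lT^0(X,Y)+\tfrac12[\lT^0 X,\lT^0 Y]=\Kl^{0,Q}_q((d_q\lp)X,(d_q\lp)Y)$ on the horizontal lifts of the fundamental vector fields generated by $\t u,\t v\in\mL$ at the point $q=\<e|e\>$, recalling that $\kl^0=\Kl^{0,Q}_{e|e}$.

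First I would set up the relevant vector fields on $Q=G\xx_{K,f}H$. For $\lz\in\gL$, let $X^\lz$ be the vector field on $Q$ whose value at $\<g|h\>$ is $\<(d_eL^G_g)\mathrm{Ad}... $---more precisely, I would use the left-$G$-action to transport: the infinitesimal generator of $a_t=\exp(t\lz)$ acting on $Q$, which at $\<e|e\>$ equals $\<\lz|0_e\>$. For $\lz=\t u\in\mL$ this vector is already $\lT^0$-horizontal at $\<e|e\>$ by the definition of $T^0_{e|e}Q$, and more generally the $G$-action generator for $\lz\in\mL$ projects under $d\lp$ to the fundamental vector field of $\lz$ on $M$. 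The key computation is the Lie bracket of two such generators: since the $G$-action on $Q$ is a left action, the generators of $\exp(t\lz)$ satisfy $[X^{\lz_1},X^{\lz_2}]=-X^{[\lz_1,\lz_2]}$ (or $+$, depending on convention)---this is the standard fact that the map $\lz\mapsto$ fundamental vector field is an anti-homomorphism (or homomorphism) of Lie algebras. Then $[\t u,\t v]\in\gL$ decomposes as $P_\kL[\t u,\t v]+P_\mL[\t u,\t v]$ under \eqref{14}.

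Next I would plug into the curvature formula. The two-form $d\lT^0$ evaluated on the horizontal fields $X^{\t u},X^{\t v}$ at $\<e|e\>$ is computed via the Cartan formula $d\lT^0(X,Y)=X(\lT^0 Y)-Y(\lT^0 X)-\lT^0([X,Y])$; since $X^{\t u},X^{\t v}$ are horizontal ($\lT^0 X^{\t u}=\lT^0 X^{\t v}=0$ at least along the relevant orbit, by invariance of $\lT^0$ and the fact that $\t u,\t v\in\mL$), the first two terms vanish and also $\tfrac12[\lT^0 X,\lT^0 Y]=0$. Thus $\kl^0(u,v)=\Kl^{0,Q}_{e|e}(u,v)=-\lT^0_{e|e}([X^{\t u},X^{\t v}]_{e|e})$. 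Using $[X^{\t u},X^{\t v}]=\mp X^{[\t u,\t v]}$ and that $X^{[\t u,\t v]}$ at $\<e|e\>$ equals $\<[\t u,\t v]|0_e\>$, we get $\kl^0(u,v)=\mp\lT^0_{e|e}\<[\t u,\t v]|0_e\>=\mp(d_ef)P_\kL[\t u,\t v]$ by Proposition \ref{c}. The sign and the factor are pinned down by carefully matching the curvature convention $d\lT+\tfrac12[\lT,\lT]$ with the normalization in Theorem \ref{r} (the "$-2$" there forces the factor $2$ on the left-hand side $2\kl^0(u,v)$).

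The main obstacle I anticipate is bookkeeping of signs and of the factor of $2$: one must reconcile (a) the sign in the anti-homomorphism $\lz\mapsto$ fundamental vector field for the left $G$-action on $Q$, (b) the curvature normalization $d\lT+\tfrac12[\lT,\lT]$ used in this paper versus the Koszul formula $d\lT(X,Y)=-\lT([X,Y])$ valid for horizontal fields, and (c) the factor $2$ appearing in the statement $2\kl^0(u,v)=\ldots$. A clean way to avoid convention pitfalls is to compute $\Kl$ via the structure equation in the form $\Kl(X,Y)=-\lT([X^{\mathrm{hor}},Y^{\mathrm{hor}}])$ for horizontal lifts (as is done implicitly in the proof of Theorem \ref{r}, where $\lT_q[\lx^\lT,\lh^\lT]_q=-2\Kl^Q_q(\lx_x,\lh_x)$), which already contains the factor $-2$; combined with $[X^{\t u},X^{\t v}]=-X^{[\t u,\t v]}$ and Proposition \ref{c}, this yields $-2\kl^0(u,v)=-(d_ef)P_\kL[\t u,\t v]\cdot(-1)$, i.e.\ $2\kl^0(u,v)=-(d_ef)P_\kL[\t u,\t v]$ after the sign from the anti-homomorphism is absorbed. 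I would write the proof by first recording the bracket identity for $G$-generators on $Q$, then invoking the horizontality to kill all but the $\lT^0([\cdot,\cdot])$ term, and finally applying Proposition \ref{c}, being explicit at each sign.
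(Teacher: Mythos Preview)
Your proposal has a genuine gap: you conflate two different families of vector fields and borrow incompatible properties from each. The left-action generators $\ll_{\t u}^Q$ (your ``$X^{\t u}$'') satisfy the bracket identity $[\ll_{\t u}^Q,\ll_{\t v}^Q]=\ll_{[\t v,\t u]}^Q$, but they are \emph{not} $\lT^0$-horizontal in a neighbourhood of $\<e|e\>$. Indeed, by Proposition~\ref{c} one has $(\lT^0\ll_{\t u}^Q)_{g|e}=(d_ef)P_\kL Ad_{g^{-1}}^G\t u$, which vanishes at $g=e$ but whose derivative along $g_t=\exp(t\,\t v)$ equals $-(d_ef)P_\kL[\t v,\t u]$, generically nonzero. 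Hence the terms $X(\lT^0 Y)$ and $Y(\lT^0 X)$ in the Cartan formula do \emph{not} vanish; your invariance argument does not apply because $(L_a^Q)_*\ll_{\t u}^Q=\ll_{Ad_a\t u}^Q$, and $Ad_a\t u$ need not lie in $\mL$. Conversely, the genuine $\lT^0$-horizontal lifts $X^\Lt$ of Lemma~\ref{o} do satisfy $\lT^0 X^\Lt\equiv 0$, so the formula $\lT^0_q[X^{\t u},X^{\t v}]_q=-2\Kl^{0,Q}_q(u,v)$ applies to them --- but then the bracket $[X^{\t u},X^{\t v}]$ is \emph{not} given by $-X^{[\t u,\t v]}$, since the map $\Lt\mapsto X^\Lt$ is not a Lie algebra (anti-)homomorphism.

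The paper's proof uses the left-action fields $\ll_\la^Q$ for \emph{arbitrary} $\la,\lg\in\gL$, computes all four terms in $2d\lT^0+[\lT^0\cdot,\lT^0\cdot]$ (none of which are discarded), and obtains $2\Kl^0_{e|e}(\<\la|0\>,\<\lg|0\>)=(d_ef)\big([P_\kL\la,P_\kL\lg]-P_\kL[\la,\lg]\big)$; specializing to $\la=\t u,\lg=\t v\in\mL$ then kills the first summand. If you redo your computation keeping the derivative terms $X(\lT^0 Y)-Y(\lT^0 X)$, you will find that they contribute $(d_ef)P_\kL[\t v,\t u]-(d_ef)P_\kL[\t u,\t v]=-2(d_ef)P_\kL[\t u,\t v]$, which combines with $-\lT^0[\ll_{\t u}^Q,\ll_{\t v}^Q]_{e|e}=+(d_ef)P_\kL[\t u,\t v]$ to give the correct result $2\kl^0(u,v)=-(d_ef)P_\kL[\t u,\t v]$; your shortcut, dropping the first contribution, yields the wrong sign.
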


\begin{proof} Let $\la,\lg\in\gL.$ Consider the left action vector field
$$(\ll_\la^Q)_{g|h}=\dl_t^0\,\<a_t g|h\>=\<(d_eR_g^G)\la|0_h\>$$
on $Q.$ The identity $R_g^G=L_g^G\circ I_{g^{-1}}^G$ implies $d_eR_g^G=(d_eL^G_g)Ad_{g^{-1}}^G.$ Therefore Proposition \ref{c} yields
$$(\lT^0\ll_\la^Q)_{g|e}=\lT^0_{g|e}\<(d_eR_g^G)\la|0_h\>=(d_ef)\,P_\kL\,Ad_{g^{-1}}^G\la.$$
Putting $g_t=\exp(t\,\lg)\in G$ we conclude that
$$(\lT^0\ll_\la^Q)_{g_t|e}=(d_ef)P_\kL\,Ad_{g_t^{-1}}^G\la=(d_ef)\,P_\kL\,
\exp(-t\,ad_\lg)\la$$ and hence
$$d_{e|e}(\lT^0\ll_\la^Q)\<\lg|0_e\>\,=\,\dl_t^0\,(\lT^0\ll_\la^Q)_{g_t|e}
$$
$$
=\,(d_ef)\,P_\kL\,\dl_t^0\exp(-t\,ad_\lg)\la\,=\,(d_ef)P_\kL[\la,\lg]\, .$$
Since $[\ll_\la^Q,\ll_\lg^Q]=\ll_{[\lg,\la]}^Q$ for left actions we have 
$$\lT^0_{e|e}[\ll_\la^Q,\ll_\lg^Q]_{e|e}=\lT^0_{e|e}(\ll^Q_{[\lg,\la]})_{e|e}=(d_ef)P_\kL\,[\lg,\la].$$ 
Therefore, we have
$$2\Kl^0_{e|e}(\<\la|0_e\>,\<\lg,0_e\>)=2(d\lT^0)_{e|e}(\<\la|0_e\>,\<\lg,0_e\>)+[\lT^0_{e|e}\<\la|0_e\>,\lT^0_{e|e}\<\lg|0_e\>]$$
$$=\,d_{e|e}(\lT^0\ll_\lg^Q)\<\la|0_e\>-d_{e|e}(\lT^0\ll_\la^Q)\<\lg|0_e\>
-\lT^0_{e|e}[\ll_\la^Q,\ll_\lg^Q]_{e|e}
$$
$$
+[\lT^0_{e|e}\<\la|0_e\>,\lT^0_{e|e}\<\lg|0_e\>]$$
$$=\,(d_ef)(P_\kL[\lg,\la])-(d_ef)(P_\kL[\la,\lg])-(d_ef)(P_\kL[\lg,\la])+
[(d_ef)(P_\kL\la),(d_ef)(P_\kL\lg)]
$$
$$
=\, (d_ef)([P_\kL\la,P_\kL\lg]- P_\kL[\la,\lg])\, .$$
Since $P_\kL\t u=0$ we obtain $2\kl^0(u,v)=2\lO^0_{e|e}(\<\t u|0\>,\<\t v|0\>)=-(d_ef)
(P_\kL[\t u,\t v]).$
\end{proof}

Every $\la\in\gL$ induces a left action vector field $\ll_\la^M$ on $M$ by putting
$$(\ll_\la^M)_x=(d_eR_x^M)\la=\dl_t^0(a_t x),$$
where $a_t=\exp(t\la).$ For left actions we have the reverse commutator identity $[\ll_\la^M,\ll_\lg^M]=\ll_{[\lg,\la]}^M$ for $\la,\lg\in\gL.$ 

\begin{lemma}\label{o}
For $\Lt\in\mL$ the vector field $\ll_\Lt^M$ has the horizontal lift 
\be{17}X^\Lt_{g|h}:=\<(d_eL_g^G)\,P_\mL\,Ad_{g^{-1}}^G\Lt|0_h\>.\ee
\end{lemma}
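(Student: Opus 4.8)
The plan is to verify, in order, that the formula \er{17} gives a well-defined vector field on $Q=G\xx_{K,f}H$, that this field takes values in the tautological horizontal subspaces, and that its image under $d\lp$ is $\ll_\Lt^M$; uniqueness of the horizontal lift is then automatic, since $T^0_{g|h}Q\cap ker(d_{g|h}\lp)=0$.

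For well-definedness I would use the first equality in \er{30}, namely $\<\d g|\d h\>=\<(d_gR^G_{k^{-1}})\d g\,|\,(d_hL^H_{f(k)})\d h\>$ for $k\in K$, applied with $\d g=(d_eL_g^G)P_\mL Ad^G_{g^{-1}}\Lt$ and $\d h=0_h$; this reduces the claim to the identity $(d_gR^G_{k^{-1}})(d_eL_g^G)P_\mL Ad^G_{g^{-1}}\Lt=(d_eL^G_{gk^{-1}})P_\mL Ad^G_{(gk^{-1})^{-1}}\Lt$, i.e.\ to the statement that \er{17} evaluated at $(g,h)$ and at $(gk^{-1},f(k)h)$ agree. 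Writing $R^G_{k^{-1}}\circ L^G_g=L^G_{gk^{-1}}\circ I^G_k$ and differentiating at $e$ gives $(d_gR^G_{k^{-1}})(d_eL_g^G)=(d_eL^G_{gk^{-1}})Ad^G_k$; since the splitting \er{14} is $Ad_K$-invariant, $Ad^G_k$ commutes with $P_\mL$, and $Ad^G_k Ad^G_{g^{-1}}=Ad^G_{(gk^{-1})^{-1}}$, whence the identity. Horizontality is then immediate from the definition of $\lT^0$: with $\Lt':=P_\mL Ad^G_{g^{-1}}\Lt\in\mL$ we have $X^\Lt_{g|h}=\<(d_eL_g^G)\Lt'\,|\,0_h\>\in T^0_{g|h}Q$.

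For the projection I would start from $(d_{g|h}\lp)\<\d g|\d h\>=(d_gR_o^M)\d g$ (recorded just after \er{30}), so that $(d_{g|h}\lp)X^\Lt_{g|h}=(d_gR_o^M)(d_eL_g^G)P_\mL Ad^G_{g^{-1}}\Lt$. Since $ker(d_eR_o^M)=\kL$ the projector $P_\mL$ may be dropped, and the relation $R_o^M\circ L^G_g=L^M_g\circ R_o^M$ (both send $x\in G$ to $g(x(o))$) turns this into $(d_oL^M_g)(d_eR_o^M)Ad^G_{g^{-1}}\Lt$. Evaluating with one-parameter subgroups, $(d_eR_o^M)Ad^G_{g^{-1}}\Lt=\dl_t^0\,\big((g^{-1}\exp(t\Lt)g)(o)\big)$; applying $d_oL^M_g$ cancels the conjugation and leaves $\dl_t^0\,\big(\exp(t\Lt)(g(o))\big)=(\ll_\Lt^M)_{g(o)}$. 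Together with horizontality, this identifies $X^\Lt$ with the horizontal lift of $\ll_\Lt^M$.

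All of this is bookkeeping except for one point: in the well-definedness step the $Ad_K$-invariance of the splitting \er{14} is precisely what makes $P_\mL Ad^G_{g^{-1}}\Lt$ transform correctly under the equivalence $\<g|h\>=\<gk^{-1}|f(k)h\>$, so that is where the only real content lies; the remaining identities are standard manipulations with left and right translations and adjoint actions.
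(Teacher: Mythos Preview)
Your proof is correct and follows essentially the same route as the paper's: the same identity $R^G_{k^{-1}}\circ L^G_g=L^G_{gk^{-1}}\circ I^G_k$ together with the $Ad_K$-invariance of $P_\mL$ for well-definedness, horizontality by inspection, and then the relation $R_o^M\circ L^G_g=L^M_g\circ R_o^M$ for the projection. The only cosmetic difference is in the last step, where the paper packages your one-parameter-subgroup computation into the identity $L_g^M\circ R_o^M\circ I_{g^{-1}}^G=R_{go}^M$ (so that $(d_oL_g^M)(d_eR_o^M)Ad_{g^{-1}}^G=d_eR_{go}^M$ directly gives $(\ll_\Lt^M)_{go}$), but this is exactly what you are computing.
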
 

\begin{proof} Let $(g,h)\in G\xx H$ and $k\in K.$ Then the equality
$R_{k^{-1}}^G\circ L_g^G=L_{gk^{-1}}^G\circ I_k^G$ implies that
$$(d_gR_{k^{-1}}^G)(d_eL_g^G)=(d_eL_{gk^{-1}}^G)Ad_k^G.$$ Since $[Ad_k^G,P_\mL]=0$, we have $Ad_k^G\,P_\mL\,Ad_{g^{-1}}^G=P_\mL\,Ad_k^G\,Ad_{g^{-1}}^G=P_\mL\,Ad_{kg^{-1}}^G$ and therefore
$$\<(d_gR_{k^{-1}}^G)(d_eL_g^G)\,P_\mL\,Ad_{g^{-1}}^G\Lt|(d_hL_{f(k)}^H)0_h\>
$$
$$
=\, \<(d_eL_{gk^{-1}}^G)\,Ad_k^G\,P_\mL\,Ad_{g^{-1}}^G\Lt|0_{f(k)h}\>
\,=\,\<(d_eL_{gk^{-1}}^G)\,P_\mL\,Ad_{kg^{-1}}^G\Lt|0_{f(k)h}\>\, .$$
This shows that \er{17} depends only on the class of $(g,h)$ and therefore defines a vector field on $Q$ which is $\lT^0$-horizontal by construction. Moreover
the equality $R_o^M\circ L_g^G=L_g^M\circ R_o^M$ implies that
$$(d_gR_o^M)(d_eL_g^G)=(d_oL_g^M)(d_eR_o^M),$$ and the equality
$L_g^M\circ R_o^M\circ I_{g^{-1}}^G=R_{go}^M$ implies that $(d_oL_g^M)(d_eR_o^M)\,Ad_{g^{-1}}^G=d_eR_{go}^M.$ Since 
$(d_eR_o^M)P_\mL=d_eR_o^M$ it follows that
$$(X^\Lt\lp)\<g|h\>=(d_{g|h}\lp)X^\Lt_{g|h}=(d_gR_o^M)(d_eL_g^G)\,P_\mL\,Ad_{g^{-1}}^G\Lt$$
$$=\, (d_oL_g^M)(d_eR_o^M)\,P_\mL\,Ad_{g^{-1}}^G\Lt\,=\,(d_oL_g^M)(d_eR_o^M)Ad_{g^{-1}}^G\Lt
$$
$$
=\, (d_eR_{go}^M)\Lt\,=\,(\ll_{\Lt}^M)_{go}\, .$$
\end{proof}

\section{The symmetric case}

Now we consider the special case where $M=G/K$ is a {\bf symmetric} space. These spaces have a well-known algebraic description using the so-called Lie triple systems \cite{H}. As discovered by M. Koecher \cite{Ko}, in the hermitian symmetric case there
is a more ``elementary'' approach using instead the so-called {\bf hermitian Jordan triple systems} \cite{L}. These are (complex) vector spaces $Z$ which carry a Jordan triple product 
$$(u,v,w)\mapsto\{uv^*w\}\qquad\forall\ u,v,w\in Z$$
which is symmetric bilinear in $(u,w)$ and conjugate-linear in $v.$ The Jacobi identity is replaced by the {\bf Jordan triple identity}
$$[u\Box v^*,z\Box w^*]=\{uv^*z\}\Box w^*-z\Box\{wu^*v\}^*\qquad\forall\ u,v,z,w\in Z.$$
Here $u\Box v^*\in\,End(Z)$ is defined by 
$$(u\Box v^*)z=\{uv^*z\}\qquad\forall\ z\in Z.$$
The basic example is the matrix space $Z={\mathbb C}^{r\xx s}$ with Jordan triple product
$$\{uv^*w\}=\f12(uv^*w+wv^*u).$$
The Jordan theoretic approach applies to all complex hermitian symmetric spaces, including the two exceptional types, and also to all classical real symmetric spaces. More generally, all real forms of complex hermitian symmetric spaces, for example symmetric convex cones \cite{FK}, and therefore also some exceptional real symmetric spaces, are included. (On the other hand, there exist non-classical real symmetric spaces which cannot be treated this way.)

We first consider both the real and complex case. Given a (real or complex) Jordan triple $Z,$ we put 
$$Q_zw:=\{zw^*z\}\qquad\forall\ z,w\in Z$$
and define the {\bf Bergman operator}
$$B_{z,w}=id_Z-2z\Box w^*+Q_zQ_w\qquad\forall\ z,w\in Z,$$
acting linearly on $Z.$ In case $B_{z,w}\in {\rm GL}(Z)$ is invertible, we define the {\bf quasi-inverse} 
$$z^w:=B_{z,w}^{-1}(z-Q_zw).$$
Define $\Le$ to be $\Le=-1$ for the non-compact case, $\Le=1$ for the compact case, and $\Le=0$ for the flat case. We define symmetric spaces $M^\Le$ associated with the Jordan triple $Z$ as follows: $M^0=Z$ is the flat model, $M^-$ is the connected component
$$M^-\ic\{z\in Z\,\mid\,\det B(z,z)\ne 0\}$$ 
containing the origin $o=0\in Z$ (a bounded symmetric domain, more precisely a norm unit ball of $Z$), and 
$$M^+=(Z\xx Z)/\os$$ 
is a compact manifold consisting of all equivalence classes $[z,a]=[z^{b-a},b],$ whenever $B(z,a-b)$ is invertible \cite{L}. (In view of the "addition formula" $(z^u)^v=z^{u+v}$ for quasi-inverses \cite{L}, we may informally regard $M^+$ as the set of all quasi-inverses $z^a,$ even when $B(z,a)$ is not invertible.) Thus we have natural inclusions
$$M^-\ic Z=M^0\ic M^+,$$
under the embedding $Z\ic M^+$ given by $z\mapsto z^0=[z,0].$ The points at infinity are precisely the classes $[z,a]$ where $\det B(z,a)=0.$ The compact dual $M^+$ is also called the {\bf conformal compactification} of $Z.$ At the origin the tangent space $T_oM=Z$ is independent of the choice of $\Le.$ Let $K\ic {\rm GL}(Z)$ be the identity component of the Jordan triple automorphism group of $Z,$ i.e., all linear isomorphisms of $Z$ preserving the Jordan triple product. The group $K$ acts by linear transformations on every type $M^\Le.$ For fixed $w\in Z\ui M^\Le,$ the (non-linear)
{\bf transvection}, defined by
\be{40}\tL_w^\Le(z):=w+B_{w,-\Le w}^{1/2}z^{\Le w},\ee
is a birational automorphism of $M^\Le.$ For $\Le=1,0,-1$ let $G^\Le$ denote the
connected real Lie group generated by $K$ together with the transvections \er{40}. In the
flat case $\Le=0$ we obtain the so-called Cartan motion group $G^0:=K\xx Z$ which is a
semi-direct product of $K$ and the translations $\tL_w^0(z)=z+w$ for $w\in Z.$ If $\Le\ne 0$,
then $G^\Le$ is a reductive Lie group of compact type ($\Le=1$) or non-compact
type ($\Le=-1$), respectively. We treat all three cases simultaneously, using the notation $M^\Le$ and $G^\Le$ to denote the curvature type. In all three cases we have
$$K=\{k\in G^\Le\,\mid\,ko=o\}.$$
In the Jordan theoretic setting, the Lie algebra $\gL^\Le$ of $G^\Le$ can be described using polynomial vector fields 
$$\lx=\lx(z)\f{\dl}{\dl z}$$
(or degree $\le 2$) on the underlying vector space $Z.$ More precisely, there is a Cartan decomposition
\be{23}\gL^\Le=\kL\oplus\pL^\Le,\ee 
where the Lie algebra $\kL\ic\mathfrak{gl}(Z)$ of $K,$ consisting of all Jordan triple derivations of $Z,$ is identified with a space of linear vector fields, whereas the Lie triple system $\pL^\Le$ consists of all non-linear vector fields
$$v^\Le:=v+\Le Q_zv=(v+\Le\{zv^*z\})\f{\dl}{\dl z}\qquad\forall\ v\in Z=T_oM^\Le.$$
The projection $\gL^\Le\to\kL$ is realized as the derivative 
$$P_\kL\lg=\lg'(o)$$
at the origin $o=0\in Z.$ One can show that $\exp v^\Le=\tL^\Le_w,$ where $w:=\tan_\Le v\in M^\Le$ is given by a ``tangent'' power series defined via the Jordan triple calculus \cite{L}.
\begin{proposition} In the symmetric case the tautological connection $\lT^0$ induced
by the Cartan decomposition \er{23} satisfies
\be{24}\kl^0(u,v)=-\Le(d_ef)(u\Box v^*-v\Box u^*)\qquad\forall\ u,v\in Z.\ee 
\end{proposition}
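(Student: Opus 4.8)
The plan is to apply Theorem~\ref{n}, which expresses the curvature of the tautological connection via the bracket of horizontal lifts, and to compute the relevant bracket in the Jordan-theoretic realization of the Lie algebra $\gL^\Le$ provided by the Cartan decomposition \er{23}. Concretely, Theorem~\ref{n} states that $2\kl^0(u,v)=-(d_ef)P_\kL[\t u,\t v]$, where $\t u,\t v\in\pL^\Le=\mL$ are the unique lifts of $u,v\in T_oM=Z$ with $(d_eR_o^M)\t u=u$. So the whole statement reduces to the Jordan-algebraic identity
\[
P_\kL[\t u,\t v]\,=\,2\Le\,(u\Box v^*-v\Box u^*)\qquad\forall\ u,v\in Z,
\]
together with the observation that $P_\kL\lg=\lg'(o)$ is the derivative of the polynomial vector field at the origin.

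First I would identify $\t u$ and $\t v$: by the description of $\pL^\Le$ following \er{23}, the lift of $v\in Z$ is the polynomial vector field $v^\Le=(v+\Le\{zv^*z\})\dl/\dl z=(v+\Le Q_zv)\dl/\dl z$, and one checks $(d_eR_o^M)v^\Le=v^\Le(0)=v$, so indeed $\t v=v^\Le$. Next I would compute the Lie bracket $[\t u,\t v]=[u^\Le,v^\Le]$ of these two vector fields using the standard formula $[\lx\dl/\dl z,\lh\dl/\dl z]=(\lx'\lh-\lh'\lx)\dl/\dl z$ for polynomial vector fields on $Z$ (with $\lx',\lh'$ the Jacobians). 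With $\lx(z)=u+\Le Q_zu$ and $\lh(z)=v+\Le Q_zv$, the Jacobians are $\lx'(z)=2\Le(z\Box u^*)$ and $\lh'(z)=2\Le(z\Box v^*)$, since $\dl_z Q_zu=2(z\Box u^*)$ (this is exactly the derivative of $z\mapsto\{zu^*z\}$). Then
\[
[u^\Le,v^\Le](z)\,=\,\lx'(z)\lh(z)-\lh'(z)\lx(z)\,=\,2\Le(z\Box u^*)(v+\Le Q_zv)-2\Le(z\Box v^*)(u+\Le Q_zu).
\]
Applying $P_\kL$, i.e. taking the derivative in $z$ at $z=0$, kills the constant-in-$z$ terms; the linear-in-$z$ contributions come from $2\Le(z\Box u^*)v$ and $-2\Le(z\Box v^*)u$. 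Since $(z\Box u^*)v=\{zu^*v\}=\{vu^*z\}=(v\Box u^*)z$, the $z$-derivative of $z\mapsto 2\Le\{zu^*v\}$ is $2\Le(v\Box u^*)$, and similarly for the other term we get $-2\Le(u\Box v^*)$. Hence $P_\kL[u^\Le,v^\Le]=2\Le(v\Box u^*-u\Box v^*)=-2\Le(u\Box v^*-v\Box u^*)$, and substituting into $2\kl^0(u,v)=-(d_ef)P_\kL[\t u,\t v]$ gives $2\kl^0(u,v)=2\Le(d_ef)(u\Box v^*-v\Box u^*)$, i.e. $\kl^0(u,v)=\Le(d_ef)(u\Box v^*-v\Box u^*)$.

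There is a sign discrepancy with the claimed \er{24}: my computation yields $+\Le$ rather than $-\Le$. I expect the resolution is a sign convention — most likely in the definition of $\t u$ via $d_eR_o^M$ versus $d_eR_o^M$ composed with the flip inherent in $(\ll_\la^M)_x=(d_eR_x^M)\la$ (note the ``reverse commutator'' identities $[\ll_\la^M,\ll_\lg^M]=\ll_{[\lg,\la]}^M$ flagged repeatedly in the excerpt), or in the identification of $P_\kL$ on $\gL^\Le$ with the Jacobian at $o$ (the sign of the bracket of fundamental vector fields for a left action is reversed relative to the Lie algebra bracket). The careful bookkeeping needed to track that one overall sign through Theorem~\ref{n} — in particular whether the map $\t u\mapsto u$ intertwines $[\cdot,\cdot]_{\mL}$ with the vector-field bracket or its negative — is the one genuinely delicate point; the rest is the routine Jordan-triple differentiation sketched above. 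So the main obstacle is purely the sign convention, and once \er{40} and the convention $P_\kL\lg=\lg'(o)$ are pinned down consistently with the left-action conventions of Section~5, the identity \er{24} follows.
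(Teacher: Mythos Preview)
Your approach is exactly the paper's: invoke Theorem~\ref{n} and compute the Lie bracket $[u^\Le,v^\Le]$ in the Jordan-theoretic realization. The paper's proof is the two-line computation
\[
[u^\Le,v^\Le]=\Le\bigl(u^\dl Q_zv-v^\dl Q_zu\bigr)=2\Le(u\Box v^*-v\Box u^*)\in\kL,
\]
followed by substitution into Theorem~\ref{n}.

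The sign discrepancy you flag is not a convention issue in the paper; it is a slip in your bracket formula. With the standard convention $[X,Y]f=X(Yf)-Y(Xf)$ (this is the one used in \cite{KN}, on which the paper relies throughout), one has
\[
[\lx\,\dl/\dl z,\ \lh\,\dl/\dl z]=(\lh'\lx-\lx'\lh)\,\dl/\dl z,
\]
not $(\lx'\lh-\lh'\lx)\,\dl/\dl z$. With the correct sign, the linear-in-$z$ part of $[u^\Le,v^\Le]$ is
\[
2\Le(z\Box v^*)u-2\Le(z\Box u^*)v=2\Le\bigl((u\Box v^*)-(v\Box u^*)\bigr)z,
\]
so $P_\kL[u^\Le,v^\Le]=2\Le(u\Box v^*-v\Box u^*)$ and Theorem~\ref{n} gives exactly \er{24} with the sign $-\Le$. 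The ``reverse commutator'' phenomenon for left-action vector fields that you invoke is already absorbed in the definition of $\gL^\Le$ as a Lie algebra of vector fields on $Z$; there is no further sign to unwind. Once you correct the bracket formula, your argument coincides with the paper's.
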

\begin{proof} Denoting by $u^\dl$ the (holomorphic) partial derivative in direction $u\in Z,$ we have the commutator identity
$$[u^\Le,v^\Le]=[u+\Le Q_zu,v+\Le Q_zv]=\Le\(u^\dl Q_zv-v^\dl Q_zu\)=2\Le(u\Box v^*-v\Box u^*)\in\kL.$$
Thus we obtain a linear vector field satisfying $[u^\Le,v^\Le]'(o)=[u^\Le,v^\Le],$ and Theorem \ref{n} implies $2\kl^0(u,v)=-df([u^\Le,v^\Le]'(o))=-2\Le\,df(u\Box v^*-v\Box u^*).$
\end{proof}

\begin{theorem}\label{q} In the symmetric case, let $\lT$ be an invariant connexion related to $\lT^0$ by an $f$-covariant linear map 
$\cl:T_oM\to\hL.$ Then the respective curvatures satisfy
$$\kl(u,v)-\kl^0(u,v)=\f12[\cl u,\cl v]\qquad\forall\ u,v\in T_oM.$$
\end{theorem}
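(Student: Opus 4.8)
The plan is to compute the curvature of the general invariant connexion $\lT$ from scratch at the base point $e|e$, mirroring the proof of Theorem~\ref{n}, but now carrying along the correction term coming from $\Cl$. By Proposition~\ref{f} we have $\lT_q = \lT^0_q + \Cl^Q_q\circ d_q\lp$ with $\Cl^Q_{g|h}(d_oL_g^M) = Ad_{h^{-1}}^H\circ\cl$. Just as in Theorem~\ref{n}, it suffices to evaluate the defining curvature identity on the left-action vector fields $\ll_\la^Q,\ll_\lg^Q$ at $e|e$, where now $\la=\w u{}^\Le,\lg=\w v{}^\Le\in\mL=\pL^\Le$ are the nonlinear vector fields corresponding to $u,v\in T_oM=Z$, so that $(d_{e|e}\lp)(\ll_\la^Q)_{e|e}=u$ and $(d_{e|e}\lp)(\ll_\lg^Q)_{e|e}=v$.

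First I would write $\lT = \lT^0 + \Psi$, where $\Psi_q := \Cl^Q_q\circ d_q\lp$, and expand
$$2\Kl_{e|e}(\cdot,\cdot) = 2\,d\lT^0_{e|e}(\cdot,\cdot) + 2\,d\Psi_{e|e}(\cdot,\cdot) + [\lT^0_{e|e}\cdot + \Psi_{e|e}\cdot,\ \lT^0_{e|e}\cdot + \Psi_{e|e}\cdot].$$
The terms involving only $\lT^0$ reassemble into $2\Kl^0_{e|e}$ by Theorem~\ref{n}, contributing $2\kl^0(u,v)$. The cross terms in the bracket are $[\lT^0_{e|e}\ll_\la^Q,\Psi_{e|e}\ll_\lg^Q] + [\Psi_{e|e}\ll_\la^Q,\lT^0_{e|e}\ll_\lg^Q]$; since $\la=\w u{}^\Le\in\pL^\Le$ has $\la'(o)=0$, Proposition~\ref{c} gives $\lT^0_{e|e}\ll_\la^Q = (d_ef)P_\kL\la = 0$, and likewise for $\lg$, so these cross terms vanish. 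This leaves
$$2\kl(u,v) - 2\kl^0(u,v) = 2\,d\Psi_{e|e}(\ll_\la^Q,\ll_\lg^Q) + [\Psi_{e|e}\ll_\la^Q,\Psi_{e|e}\ll_\lg^Q].$$
Now $\Psi_{e|e}\ll_\la^Q = \cl\,u$ and $\Psi_{e|e}\ll_\lg^Q = \cl\,v$, so the bracket term is $[\cl u,\cl v]$, which already produces the $\f12[\cl u,\cl v]$ after dividing by $2$.

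The remaining point is to show that the exterior-derivative term $d\Psi_{e|e}(\ll_\la^Q,\ll_\lg^Q)$ vanishes. Using $d\Psi(X,Y) = X(\Psi Y) - Y(\Psi X) - \Psi[X,Y]$ for the two left-invariant vector fields, I would argue as follows. Along the curve $g_t=\exp(t\lg)$ one has $(\Psi\,\ll_\la^Q)_{g_t|e} = \Cl^Q_{g_t|e}(d_{g_t|e}\lp)(\ll_\la^Q)_{g_t|e} = (d_ef\text{-type correction})$; more precisely, writing $\Cl^Q_{g|e}(d_oL_g^M) = Ad_{e}^H\cl = \cl$ at $h=e$ and $(d_{g|e}\lp)(\ll_\la^Q)_{g|e} = (d_oL_g^M)(d_eR_o^M)Ad_{g^{-1}}^G\la$, we get $(\Psi\,\ll_\la^Q)_{g|e} = \cl\big((d_eR_o^M)Ad_{g^{-1}}^G\la\big)$. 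Differentiating in $t$ at $t=0$ yields a term proportional to $\cl\big((d_eR_o^M)[\la,\lg]\big)$. Since $[\la,\lg] = [\w u{}^\Le,\w v{}^\Le] \in\kL$ (the bracket of two elements of $\pL^\Le$ lies in $\kL$ in the symmetric case — this is exactly what was used in the previous Proposition), and $(d_eR_o^M)$ kills $\kL$ because $\kL$ is the isotropy algebra, this term vanishes. The symmetric terms $\ll_\lg^Q(\Psi\,\ll_\la^Q)$ and $\Psi[\ll_\la^Q,\ll_\lg^Q]_{e|e} = \Psi_{e|e}\,\ll_{[\lg,\la]}^Q = \cl\big((d_eR_o^M)[\lg,\la]\big) = 0$ vanish for the same reason. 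Hence $d\Psi_{e|e}(\ll_\la^Q,\ll_\lg^Q)=0$, and dividing the displayed identity by $2$ gives the claim.

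The main obstacle I anticipate is the bookkeeping in the $d\Psi$ term: one must be careful that $\Cl^Q_{g|e}$ genuinely equals $\cl\circ(d_oL_g^M)^{-1}$ only at $h=e$ (the $Ad_{h^{-1}}^H$ factor is trivial there), and one must correctly identify the projected differential $(d_{g|e}\lp)(\ll_\la^Q)_{g|e}$ — this is the content of the computation in Lemma~\ref{o}, which I would invoke directly. Once the vanishing of all three pieces of $d\Psi_{e|e}$ is reduced to the single fact that $[\pL^\Le,\pL^\Le]\subset\kL = \ker(d_eR_o^M)$, the proof closes immediately. It is worth noting that the argument does not actually use the Jordan-theoretic specifics beyond this bracket relation, so in fact the identity $\kl(u,v)-\kl^0(u,v) = \f12[\cl u,\cl v]$ holds for any symmetric space $M=G/K$ with Cartan decomposition $\gL=\kL\oplus\pL$, $[\pL,\pL]\subset\kL$.
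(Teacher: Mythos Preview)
Your proof is correct and follows essentially the same strategy as the paper: write $\lT=\lT^0+\lF$ (your $\Psi$), expand the curvature identity at the base point, and show that the exterior-derivative term $d\lF_{e|e}$ vanishes because $[\pL^\Le,\pL^\Le]\subset\kL=\ker(d_eR_o^M)$, leaving only $\f12[\cl u,\cl v]$.

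The one genuine (minor) difference is your choice of vector fields. The paper evaluates everything on the $\lT^0$-horizontal lifts $X^u,X^v$ of Lemma~\ref{o}; since $\lT^0X^u=0$ globally, the curvature-difference formula $\lO-\lO^0=d\lF+\f12[\lF\yi\lF]$ holds without cross terms, but the paper must then separately compute $(d_{e|e}\lp)[X^u,X^v]_{e|e}=0$ to kill $\lF_{e|e}[X^u,X^v]$. You instead reuse the left-action fields $\ll_\la^Q,\ll_\lg^Q$ from the proof of Theorem~\ref{n}; the cross terms $[\lT^0_{e|e}\ll_\la^Q,\Psi_{e|e}\ll_\lg^Q]$ vanish only pointwise (because $P_\kL\la=0$ for $\la\in\pL^\Le$), but in exchange the commutator term is immediate from $[\ll_\la^Q,\ll_\lg^Q]=\ll_{[\lg,\la]}^Q$, avoiding the extra computation the paper carries out. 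The two routes are equivalent in substance; yours is a slight streamlining, and your closing remark that only $[\pL,\pL]\subset\kL$ is used (not the Jordan structure) is accurate.
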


\begin{proof} For $v\in Z$ let $X^v$ be the $\lT^0$-horizontal lift of $\ll_{v^\Le}^M,$ as in Lemma \ref{o}. Consider the tensorial $1$-form $\lF:=\lT-\lT^0$ on $Q.$ Then
$$\lO(X^u,X^v)-\lO^0(X^u,X^v)=d\lF(X^u,X^v)+\f12[\lF X^u,\lF X^v]\qquad\forall\ u,v\in T_oM.$$
Since $L_g^M\circ R_o^M\circ I_{g^{-1}}^G=R_{go}^M$ implies $(d_oL_g^M)(d_eR_o^M)Ad_{g^{-1}}^G=d_eR_{go}^M,$ it follows that
$$(\lF X^v)\<g|e\>\,=\,\Cl^Q_{g|e}(d_{g|e}\lp)X^v_{g|e}\,=\,\Cl^Q_{g|e}(d_eR_{go}^M)v^\Le
$$
$$
=\,\Cl^Q_{g|e}(d_oL_g^M)(d_eR_o^M)Ad_{g^{-1}}^Gv^\Le\,=\,\cl(d_eR_o^M)(Ad_{g^{-1}}^G)v^\Le\, ,$$
since $\Cl_{g|e}^Q(d_oL_g^M)=\cl$ as a special case of \er{16}. With $g_t:=\exp(t\,u^\Le)$ we obtain 
$$(\lF X^v)\<g_t|e\>=\cl(d_eR_o^M)(Ad_{g_{-t}}^G)v^\Le=\cl(d_eR_o^M)\exp(-t\,ad_{u^\Le})v^\Le.$$ 
Since the linear map $\cl(d_eR_o^M)$ commutes with taking the $\dl_t$-derivative, it follows that
$$X^u(\lF X^v)\<e|e\>=d_{e|e}(\lF X^v)\<u^\Le|0_e\>=\dl_t^0(\lF X^v)\<g_t|e\>$$
$$=\cl(d_eR_o^M)\dl_t^0\exp(-t\,ad_{u^\Le})v^\Le=-\cl(d_eR_o^M)\,ad_{u^\Le}v^\Le=-\cl(d_eR_o^M)\,[u^\Le,v^\Le]=0,$$
since $[u^\Le,v^\Le]\in[\pL^\Le,\pL^\Le]\ic\kL$ implies $(d_eR_o^M)[u^\Le,v^\Le]=0.$ On the other hand,
$$X^u(X^v\lp)\<e|e\>=d_{e|e}(X^v\lp)X^u_{e|e}=d_{e|e}(X^v\lp)\<u^\Le|0_e\>$$
$$=\dl_t^0(X^v\lp)\<g_t|e\>=\dl_t^0(\ll_{v^\Le}^M)_{g_to}=\dl_t^0(v+\Le Q_{g_t(0)}v)=0,$$
since applying the product rule to the quadratic term yields a term $g_0(0)=0.$. Thus
$$(d_{e|e}\lp)[X^u,X^v]_{e|e}=([X^u,X^v]\lp)\<e|e\>=\(X^u(X^v\lp)-X^v(X^u\lp)\)\<e|e\>=0$$
and hence
$$d\lF(X^u,X^v)\<e|e\>\,=\,X^u(\lF X^v)\<e|e\>-X^v(\lF X^u)\<e|e\>-\lF_{e|e}[X^u,X^v]_{e|e}
$$
$$=\, -\cl(d_{e|e}\lp)[X^u,X^v]_{e|e}\,=\, 0\, .$$
It follows that
$$\kl(u,v)-\kl^0(u,v)=\lO_{e|e}(X^u,X^v)-\lO_{e|e}^0(X^u,X^v)$$
$$=d\lF(X^u,X^v)\<e|e\>+\f12[(\lF X^u)\<e|e\>,(\lF X^v)\<e|e\>]=\f12[\cl u,\cl v].$$
\end{proof}
Now we consider the case where $M=G/K$ is an (irreducible) {\bf hermitian symmetric} space. In this case $Z$ is a complex hermitian
Jordan triple, $G^-$ is the identity component of the real Lie group of all biholomorphic automorphisms of $M^-,$ and $G^+$ is the identity component of the biholomorphic isometry group of $M^+,$ i.e., the biholomorphic automorphisms of $M^+$ that preserve the K\"ahler metric. Both Lie groups are semi-simple. The identity \er{24} can be polarized and hence {\bf defines} the Jordan triple product in terms of the curvature tensor at the base point $o.$

\begin{proposition}\label{s}
In the hermitian symmetric case, the tautological complexion $J^0$ is integrable.
\end{proposition}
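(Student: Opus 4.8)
The plan is to use the curvature criterion established in Corollary \ref{p}: since $(M^+,j)$ is integrable (indeed a K\"ahler manifold), the complexion $J^0$ on $Q$ is integrable if and only if the curvature form $\Kl^0$ of the tautological connexion $\lT^0$ has vanishing $(0,2)$-part. So the whole statement reduces to checking that $\o{\kl^0} = 0$, where $\o{\kl^0}$ is the $(0,2)$-part of the $f$-covariant map $\kl^0:\W^2 T_oM\to\hL$ representing the curvature at the base point.

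First I would invoke the formula just derived above, namely $\kl^0(u,v) = -\Le(d_ef)(u\Box v^* - v\Box u^*)$ for all $u,v\in Z = T_oM^\Le$. The key structural observation is that the expression $u\Box v^* - v\Box u^*$ is $\mathbb C$-linear in $u$ and conjugate-linear in $v$: indeed $u\mapsto u\Box v^*$ is complex-linear (the Jordan triple product $\{uv^*w\}$ is bilinear in $(u,w)$) while $v\mapsto v\Box u^*$ is conjugate-linear. Hence $\kl^0$, as an alternating bilinear form on the complex vector space $Z$, is actually of type $(1,1)$: it is $\mathbb C$-linear in the first slot and conjugate-linear in the second. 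A $(1,1)$-form has no $(0,2)$-component, so $\o{\kl^0}(u,v) = 0$ for all $u,v$, and by Theorem \ref{r} the Nijenhuis tensor of $J^0$ vanishes identically on $Q$.

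Concretely, to verify the $(0,2)$-part vanishes I would plug into the definition
$$\o{\kl^0}(u,v) = \kl^0(u,v) + \sqrt{-1}\,\kl^0(j_ou,v) + \sqrt{-1}\,\kl^0(u,j_ov) - \kl^0(j_ou,j_ov),$$
recalling that $j_o$ acts on $Z$ as multiplication by $\sqrt{-1}$. Using $\mathbb C$-linearity in the first argument gives $\kl^0(j_ou,v) = \sqrt{-1}\,\kl^0(u,v)$, so $\sqrt{-1}\,\kl^0(j_ou,v) = -\kl^0(u,v)$; using conjugate-linearity in the second argument gives $\kl^0(u,j_ov) = -\sqrt{-1}\,\kl^0(u,v)$, so $\sqrt{-1}\,\kl^0(u,j_ov) = \kl^0(u,v)$; and combining both, $\kl^0(j_ou,j_ov) = \sqrt{-1}\,(-\sqrt{-1})\,\kl^0(u,v) = \kl^0(u,v)$. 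Summing the four terms: $\kl^0(u,v) - \kl^0(u,v) + \kl^0(u,v) - \kl^0(u,v) = 0$. By Corollary \ref{p}, $J^0$ is integrable.

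There is essentially no serious obstacle here once the formula \er{24} is in hand; the only point requiring care is the bookkeeping of which slot is linear and which is conjugate-linear in $u\Box v^* - v\Box u^*$, together with the harmless composition of $f$ with $d_ef$ (which, being $\mathbb R$-linear, does not disturb the $(1,1)$-type of $\kl^0$ as a form valued in the real Lie algebra $\hL$, where ``$(1,1)$'' is understood via the complex structure on the source $T_oM$ and the splitting used to define $\o\Kl$). Thus the proposition is an immediate corollary of the preceding curvature computation and Corollary \ref{p}.
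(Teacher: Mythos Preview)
Your overall strategy matches the paper's: invoke Corollary \ref{p}, use the formula \er{24} for $\kl^0$, and show the $(0,2)$-part vanishes. However, your justification contains a genuine error. You assert that $D(u,v):=u\Box v^*-v\Box u^*$ is $\mathbb C$-linear in $u$ and conjugate-linear in $v$, but this is false. You correctly note that $u\mapsto u\Box v^*$ is $\mathbb C$-linear; however, $v\mapsto v\Box u^*$ is \emph{linear} in $v$ (and conjugate-linear in $u$), not conjugate-linear as you claim, since $\{vu^*z\}$ is bilinear in $(v,z)$. Consequently $D(\cdot,v)$ has both a linear and a conjugate-linear part in $u$, and your step $\kl^0(j_ou,v)=\sqrt{-1}\,\kl^0(u,v)$ is incorrect: in fact $D(\sqrt{-1}u,v)=\sqrt{-1}(u\Box v^*+v\Box u^*)\neq\sqrt{-1}D(u,v)$. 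The same problem afflicts your computation of $\kl^0(u,j_ov)$.

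The fix is easy and is what the paper does: rather than claiming $D$ is sesquilinear, observe that each summand is separately of type $(1,1)$ (one is linear/conjugate-linear, the other conjugate-linear/linear), or simply verify the two identities $D(\sqrt{-1}u,v)+D(u,\sqrt{-1}v)=0$ and $D(\sqrt{-1}u,\sqrt{-1}v)=D(u,v)$ directly from the (anti)linearity of each term. These two identities immediately give
\[
D(u,v)+\sqrt{-1}D(\sqrt{-1}u,v)+\sqrt{-1}D(u,\sqrt{-1}v)-D(\sqrt{-1}u,\sqrt{-1}v)=0,
\]
so $\o{\kl^0}=0$ and Corollary \ref{p} finishes the proof. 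Your final conclusion is correct, but the intermediate bookkeeping---precisely the ``only point requiring care'' you flagged---needs to be redone.
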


\begin{proof} By Corollary \ref{p} we have to show that the curvature $2$-form $\Kl^0$ of the tautological connexion $\lT^0$ has vanishing $(0,2)$-part. By invariance under $G\xx H$ it suffices to consider the base point $\<e|e\>.$ The $\mathbb R$-bilinear map 
$$(u,v)\mapsto D(u,v)=u\Box v^*-v\Box u^*\in\kL,$$
for $u,v\in Z,$ satisfies $D(u,\sqrt{-1}v)+D(\sqrt{-1}u,v)=0$ and $D(\sqrt{-1}u,\sqrt{-1}v)=D(u,v).$ It follows that
$$D(u,v)+\sqrt{-1}D(\sqrt{-1}u,v)+\sqrt{-1}D(u,\sqrt{-1}v)-D(\sqrt{-1}u,\sqrt{-1}v)=0.$$
Therefore, Theorem \ref{q} shows that $$\kl^0(u,v)\,=\,\Le(d_ef)(u\Box v^*-v\Box u^*)
\,=\,\Le(d_ef)D(u,v)$$ has vanishing $(0,2)$-part.
\end{proof} 

If $\bl:Z\to\hL$ is an $f$-covariant anti-linear map, then $[\bl\yi\bl](u,v):=[\bl u,\bl v]$ defines an $f$-covariant anti-bilinear map
$$[\bl\yi\bl]:Z\yi Z\to\hL.$$

\begin{theorem} In the hermitian symmetric case, the invariant complexion $J$ generated by $\bl$ is integrable if and only if
$[\bl\yi\bl]=0.$
\end{theorem}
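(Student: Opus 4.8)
The plan is to combine Corollary~\ref{p} with the curvature formula of Theorem~\ref{q}, using Proposition~\ref{l}(iii) to convert the complexion $J$ generated by $\bl$ into an associated connexion. First I would recall from Proposition~\ref{l}(iii) that $J$ is induced by the invariant connexion $\lT$ generated by the $f$-covariant linear map $\cl:=-\f{\sqrt{-1}}2\bl:Z\to\hL$. Since $(M,j)$ is integrable in the hermitian symmetric case, Corollary~\ref{p} applies: $J=J^\lT$ is integrable on $Q$ if and only if the curvature $2$-form $\Kl$ of $\lT$ has vanishing $(0,2)$-part, i.e., $\o\kl=0$ where $\o\kl$ is the $(0,2)$-component of the $f$-covariant curvature map $\kl:\W^2 T_oM\to\hL$.

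Next I would compute $\kl$ using Theorem~\ref{q}: $\kl(u,v)=\kl^0(u,v)+\f12[\cl u,\cl v]$, and by Proposition~\ref{s} the tautological part $\kl^0$ already has vanishing $(0,2)$-part. Hence $\o\kl=0$ if and only if the $(0,2)$-part of $(u,v)\mapsto\f12[\cl u,\cl v]=-\f18[\bl u,\bl v]=-\f18[\bl\yi\bl](u,v)$ vanishes. So the problem reduces to showing that $[\bl\yi\bl]$ has vanishing $(0,2)$-part precisely when $[\bl\yi\bl]=0$ itself. This is where the anti-bilinearity is crucial: since $\bl$ is conjugate-linear, $[\bl u,\bl v]$ is anti-bilinear in $(u,v)$, meaning $[\bl(\sqrt{-1}u),\bl v]=-\sqrt{-1}[\bl u,\bl v]$ and similarly in the second slot, and $[\bl(\sqrt{-1}u),\bl(\sqrt{-1}v)]=-[\bl u,\bl v]$. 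Plugging these relations into the definition of the $(0,2)$-part,
$$\o{[\bl\yi\bl]}(u,v)=[\bl u,\bl v]+\sqrt{-1}[\bl(\sqrt{-1}u),\bl v]+\sqrt{-1}[\bl u,\bl(\sqrt{-1}v)]-[\bl(\sqrt{-1}u),\bl(\sqrt{-1}v)],$$
each term equals $[\bl u,\bl v]$, so the sum is $4[\bl u,\bl v]=4[\bl\yi\bl](u,v)$. Thus $\o{[\bl\yi\bl]}=0$ if and only if $[\bl\yi\bl]=0$, which combined with the reduction above completes the argument.

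The main obstacle is essentially bookkeeping rather than conceptual: one must carefully track how the $(0,2)$-projection interacts with the conjugate-linearity of $\bl$, and confirm (via Proposition~\ref{l}(iii) and Theorem~\ref{q}) that passing from $\bl$ to the generating connexion $\cl=-\tfrac{\sqrt{-1}}{2}\bl$ introduces only the harmless scalar factor $-\tfrac18$, which does not affect the vanishing condition. One should also note that $f$-covariance of $\bl$ (together with the $Ad_K$-invariance of $[\cdot,\cdot]$) guarantees $[\bl\yi\bl]$ is a genuine $f$-covariant $2$-form, so that the pointwise condition at $o$ is equivalent to the global condition on $Q$; this is already subsumed in the formalism of Section~5.
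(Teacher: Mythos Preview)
Your proposal is correct and follows essentially the same route as the paper: convert $J$ to the connexion $\lT$ generated by $-\tfrac{\sqrt{-1}}{2}\bl$ via Proposition~\ref{l}(iii), use Theorem~\ref{q} together with Proposition~\ref{s} to identify the $(0,2)$-part of the curvature with a scalar multiple of $[\bl\yi\bl]$, and conclude by Corollary~\ref{p} (the paper invokes Theorem~\ref{r} directly, which is equivalent). Your explicit verification that the $(0,2)$-projection of the anti-bilinear form $[\bl\yi\bl]$ returns $4[\bl\yi\bl]$ is a slightly more detailed version of the paper's one-line remark that $[\bl\yi\bl]$ is already of type $(0,2)$.
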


\begin{proof}
By Proposition \ref{l} the complexion $J$ is induced by the invariant connexion $\lT$
generated by $-\f{\sqrt{-1}}2\,\bl.$ Applying Theorem \ref{q} we have
$$\kl(u,v)-\kl^0(u,v)=\f12\,[\f{\sqrt{-1}}2\bl u,\f{\sqrt{-1}}2\bl v]=-\f18[\bl u,\bl v]$$
for all $u,v\in Z.$ By Theorem \ref{r} the Nijenhuis tensor for $J$ satisfies 
$$N_{e|e}(X,Y)=-2\o\kl((d_e\lp)X,(d_e\lp)Y)\qquad\forall\ X,Y\in T_{e|e}Q,$$ 
where $$\o\kl(u,v)=\kl(u,v)+\sqrt{-1}\kl(\sqrt{-1}u,v)+\sqrt{-1}\kl(u,\sqrt{-1}v)-
\kl(\sqrt{-1}u,\sqrt{-1}v)$$ is the $(0,2)$-part of the curvature $\kl$ of $\lT.$ Since the tautological complexion $J^0$ on $Q$ is integrable by Proposition \ref{s}, it follows that 
$$\o\kl^0(u,v)
$$
$$
=\, \kl^0(u,v)+\sqrt{-1}\kl^0(\sqrt{-1}u,v)+\sqrt{-1}\kl^0(u,\sqrt{-1}v)
-\kl^0(\sqrt{-1}u,\sqrt{-1}v)\,=\,0\, .$$ 
On the other hand, $[\bl\yi\bl]$ is of type $(0,2)$ since $\bl$ is anti-linear by Proposition \ref{k}. Therefore
$$\o\kl(u,v)=-\f18[\bl u,\bl v]$$
and hence 
$$N_{e|e}(X,Y)=\f14[\bl(d_e\lp)X,\bl(d_e\lp)Y]=\f14[\bl\yi\bl](X\xt Y)\qquad\forall\ X,Y\in T_{e|e}Q.$$ 
This implies that $N$ vanishes at the base point $\<e|e\>$ if and only if $[\bl\yi\bl]=0.$ By $G\xx H$-invariance, this implies that $N$ vanishes at all points $\<g|h\>\in Q.$
\end{proof}
\begin{theorem}\label{t}
In the hermitian symmetric case, the hermitian homogeneous $H$-bundles $Q$ endowed with an integrable invariant complexion $J$ are in $1-1$ correspondence with pairs $(f,\bl)$ consisting of a homomorphism $f:K\to L\ic H$ and an $f$-covariant 
anti-linear map $\bl:T_oM\to\hL$ satisfying
\be{26}[\bl\yi\bl]=0,\ee 
modulo the equivalence $(f,\bl)\os(I_\el^H\circ f,Ad_\el^H\,\bl),$ where $\el\in L$ is arbitrary. Via the relation \er{21}, we may equivalently consider pairs $(f,\al)$ where $\al:T_oM\to\Ll$ is any $f$-covariant $\mathbb R$-linear map such that
\be{27}[\al u,\al v]=[\al j_ou,\al j_ov]\qquad\forall\ u,v\in T_oM.\ee
\end{theorem}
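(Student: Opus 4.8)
The plan is to assemble this theorem from the results already established, rather than proving anything new from scratch. The first ingredient is the previous theorem in this section: in the hermitian symmetric case, the invariant complexion $J$ generated by an $f$-covariant anti-linear map $\bl : T_oM \to \hL$ is integrable if and only if $[\bl \yi \bl] = 0$. Combined with the general classification of invariant complexions on hermitian homogeneous $H$-bundles (the theorem at the end of Section 4), which puts such bundles-with-complexion in bijection with pairs $(f,\bl)$ modulo the equivalence $(f,\bl) \os (I_\el^H \circ f, Ad_\el^H\,\bl)$ for $\el \in L$, the first assertion of the present theorem follows at once: one simply restricts that bijection to the sublocus cut out by the integrability condition \er{26}. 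The only point to check here is that the equivalence relation preserves the condition $[\bl\yi\bl]=0$, which is immediate since $Ad_\el^H$ is a Lie algebra automorphism: $[Ad_\el^H\bl\, u, Ad_\el^H\bl\, v] = Ad_\el^H[\bl u, \bl v]$.

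For the second assertion, I would invoke the correspondence \er{21} between $f$-covariant anti-linear maps $\bl : T_oM \to \hL$ and $f$-covariant $\mathbb R$-linear maps $\al : T_oM \to \Ll$, given by $\bl v = \sqrt{-1}\,\al(v) - \al\, j_o v$, which was already established (it works because $j_o$ commutes with $d_oL_k^M$ for $k \in K$, and because $\hL = \Ll \oplus \sqrt{-1}\Ll$). What remains is to translate the condition $[\bl\yi\bl] = 0$ into a condition purely on $\al$. Expanding, $[\bl u, \bl v] = [\sqrt{-1}\al u - \al j_o u,\ \sqrt{-1}\al v - \al j_o v]$, which upon using $\mathbb R$-bilinearity of the bracket and collecting terms yields a combination of $[\al u, \al v]$, $[\al j_o u, \al j_o v]$, $[\al u, \al j_o v]$, and $[\al j_o u, \al v]$ with real and imaginary coefficients. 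Since $\bl$ is anti-linear, $[\bl\yi\bl]$ is automatically of type $(0,2)$, so it is determined by its values and the vanishing of the full expression reduces (after separating the $\Ll$ and $\sqrt{-1}\Ll$ components, or equivalently tracking real versus imaginary parts) to the single equation $[\al u, \al v] = [\al j_o u, \al j_o v]$ for all $u,v$, which is exactly \er{27}. I expect the main (though still routine) obstacle to be this bookkeeping: one must be careful that replacing $v$ by $j_o v$ and using $j_o^2 = -1$ correctly shows the "cross terms" $[\al u, \al j_o v] + [\al j_o u, \al v]$ contribute nothing independent, so that \er{26} collapses to precisely \er{27} with no leftover constraints.

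Finally, I would note that under the correspondence \er{21}, the equivalence $(f,\bl) \os (I_\el^H \circ f, Ad_\el^H\bl)$ with $\el \in L$ corresponds exactly to the equivalence $(f,\al) \os (I_\el^L \circ f, Ad_\el^L \al)$, because for $\el \in L$ the automorphism $Ad_\el^H$ preserves the splitting $\hL = \Ll \oplus \sqrt{-1}\Ll$ and restricts to $Ad_\el^L$ on $\Ll$, and it commutes with multiplication by $\sqrt{-1}$; hence $Ad_\el^H \bl = \sqrt{-1}(Ad_\el^L \al) - (Ad_\el^L \al) j_o$ since $j_o$ is $K$-equivariant and $f(K) \subset L$. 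This makes the two parametrizations $(f,\bl)$ and $(f,\al)$ genuinely equivalent, completing the proof. No new computation beyond the expansion in the second paragraph is needed; everything else is assembly of earlier results.
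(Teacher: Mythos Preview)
Your proposal is correct and follows essentially the same approach as the paper: the paper's own proof is a one-line remark that conditions \er{26} and \er{27} are equivalent once $\al$ and $\bl$ are related by \er{21}, with the classification part taken as an immediate consequence of the earlier results you cite. Your write-up simply supplies the details the paper omits, including the expansion $[\bl u,\bl v]=-[\al u,\al v]+[\al j_ou,\al j_ov]-\sqrt{-1}\big([\al u,\al j_ov]+[\al j_ou,\al v]\big)$ and the observation (via $v\mapsto j_ov$) that the imaginary part vanishes automatically once \er{27} holds.
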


\begin{proof} If $\al$ and $\bl$ are related by \er{21}, then the conditions \er{26} and \er{27} are equivalent.
\end{proof}

Since the classification of Theorem \ref{t} uses only data coming from the group $K,$ which is the same for the hermitian symmetric spaces
$M^\Le=G^\Le/K,$ for $\Le=0,1,-1,$ we obtain:

\begin{corollary}
There is a canonical $1-1$ correspondence between the sets classifying the hermitian 
homogeneous $H$-bundles $(P,Q)$, endowed with an integrable invariant complexion $J,$ 
over the hermitian symmetric spaces of compact type, non-compact type and flat type, 
respectively.
\end{corollary}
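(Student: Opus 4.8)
The plan is to observe that this corollary is an immediate consequence of Theorem~\ref{t}, so that the only real content is the assertion that the classifying data is \emph{literally the same} set for all three curvature types $\Le\in\{-1,0,1\}$. First I would recall that Theorem~\ref{t} describes the hermitian homogeneous $H$-bundles with integrable invariant complexion over a hermitian symmetric space $M=G/K$ as equivalence classes of pairs $(f,\bl)$, where $f\colon K\to L$ is a homomorphism and $\bl\colon T_oM\to\hL$ is an $f$-covariant anti-linear map with $[\bl\wedge\bl]=0$, modulo conjugation by $\el\in L$. The key point is to check, item by item, that every ingredient entering this description depends only on the pair $(K,T_oM=Z)$ together with the $K$-action on $Z$, and not on which of $M^-$, $M^0$, $M^+$ we are using.

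The key steps, in order: (i) recall from Section~6 that in the Jordan-theoretic setup the isotropy group $K\ic\mathrm{GL}(Z)$ is the identity component of the Jordan triple automorphism group of $Z$, hence is the \emph{same} group for $\Le=-1,0,1$; (ii) recall that the base-point tangent space $T_oM^\Le=Z$ is independent of $\Le$, and that $K$ acts on it by the same linear representation in all three cases; (iii) note that the target data --- the complex Lie group $H$, its maximal compact $L$, the Lie algebra $\hL$, and the complex structure on $Z$ (which is intrinsic to the hermitian Jordan triple $Z$, not to $M^\Le$) --- involve no reference to $G^\Le$; (iv) conclude that the set of admissible pairs $(f,\bl)$ and the equivalence relation $(f,\bl)\sim(I_\el^H\circ f,\,Ad_\el^H\,\bl)$, $\el\in L$, are verbatim the same for the three types; (v) invoke Theorem~\ref{t} three times to get three bijections onto this common set, and compose them to obtain the canonical correspondence. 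I would also point out that the correspondence is ``canonical'' precisely because it is the identity on the classifying data --- it sends the bundle over $M^\Le$ with data $(f,\bl)$ to the bundle over $M^{\Le'}$ with the \emph{same} $(f,\bl)$.

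The main (and essentially only) obstacle is a bookkeeping one: making sure that the integrability condition $[\bl\wedge\bl]=0$ (equivalently $[\al u,\al v]=[\al j_o u,\al j_o v]$ via \er{21}) genuinely involves nothing tied to the curvature type. This is clear once one notices that $[\bl\wedge\bl]$ is built from the bracket in $\hL$ and the complex structure $j_o$ on $Z$, both of which are $\Le$-independent; the place where $\Le$ \emph{did} appear in the analysis --- namely in $\kl^0(u,v)=\Le(d_ef)D(u,v)$ from the proof of Proposition~\ref{s} --- drops out of the final condition because $\ol{\kl^0}=0$ for every $\Le$ (the tautological complexion $J^0$ is integrable regardless of $\Le$, by Proposition~\ref{s}). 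Hence the $\Le$-dependence is invisible at the level of the classification, and the corollary follows.

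\begin{proof}
By Section~6, for each $\Le\in\{-1,0,1\}$ the hermitian symmetric space $M^\Le=G^\Le/K$ has the same isotropy group $K$ (the identity component of the Jordan triple automorphism group of the fixed hermitian Jordan triple $Z$), the same base-point tangent space $T_oM^\Le=Z$ with the same linear $K$-action, and the same complex structure $j_o$ on $Z$ (intrinsic to $Z$). By Theorem~\ref{t}, the hermitian homogeneous $H$-bundles $(P,Q)$ over $M^\Le$ endowed with an integrable invariant complexion $J$ are, for each $\Le$, in $1-1$ correspondence with equivalence classes of pairs $(f,\bl)$ where $f\colon K\to L\ic H$ is a homomorphism and $\bl\colon Z\to\hL$ is an $f$-covariant anti-linear map with $[\bl\wedge\bl]=0$, modulo the equivalence $(f,\bl)\os(I_\el^H\circ f,\,Ad_\el^H\,\bl)$ with $\el\in L$. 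Every datum entering this description --- the group $K$, its representation on $Z$, the anti-linear condition (which refers to $j_o$), the complex Lie group $H$ with maximal compact $L$ and Lie algebra $\hL$, and the bracket condition $[\bl\wedge\bl]=0$ --- is independent of $\Le$. Consequently the classifying set, together with its equivalence relation, is literally the same for $\Le=-1,0,1$. Composing the three bijections supplied by Theorem~\ref{t} with this common target yields the asserted canonical $1-1$ correspondence, which sends a bundle over $M^\Le$ with data $(f,\bl)$ to the bundle over $M^{\Le'}$ with the same data $(f,\bl)$.
\end{proof}
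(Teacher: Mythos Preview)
Your proposal is correct and follows exactly the same approach as the paper, which states (immediately before the corollary) that the classification of Theorem~\ref{t} uses only data coming from the group $K$, which is the same for $M^\Le=G^\Le/K$ for $\Le=0,1,-1$. You have simply spelled out in detail what the paper compresses into a single sentence; the argument is identical.
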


\section{Concluding remarks}

In order to put our results in perspective, and motivate the general treatment in the first 
four Sections, we outline possible generalizations of our approach. In the geometric 
quantization program, one considers more general non-symmetric $G$-homogeneous spaces 
$N=G/C,$ endowed with an invariant complex structure $j.$ An interesting class of examples 
is obtained as follows: Let $M=G/K$ be an irreducible hermitian symmetric space of compact, 
non-compact or flat type (depending on the choice of $G$). Then the tangent space 
$Z=T_o(M)$ at the origin $o\in M$ is a hermitian Jordan triple. The so-called principal 
inner ideals $U\ic Z$ are precisely the Peirce $2$-spaces $U=Z^2_c$ for a given tripotent 
$c\in Z.$ Let $F_j$ denote the Grassmann type manifold of all such Peirce $2$-spaces of 
fixed rank $j\le r.$ More generally, for any increasing sequence $1\le 
j_1<j_2<\ldots<j_\el\le r$ there is a flag type manifold $F_{j_1,\ldots,j_\el}$ consisting 
of all flags $U_1\ic U_2\ic\ldots\ic U_\el$ of Peirce $2$-spaces $U_i$ of rank $j_i.$ Using 
the $G$-action, one can define such flag manifolds for any tangent space $T_x(M),\,x\in M,$ 
and obtains a fibre bundle $N_{j_1,\ldots,j_\el}\to M$ with typical fibre 
$F_{j_1,\ldots,j_\el}.$ This fibre bundle is homogeneous 
$N_{j_1,\ldots,j_\el}=G/K_{j_1,\ldots,j_\el},$ where $K_{j_1,\ldots,j_\el}$ is a closed 
subgroup of $K$ which is the same for the compact, non-compact and flat case. Our principal 
result, concerning the classification of holomorphic principal fibre bundles and the 
duality between the compact and non-compact case, may be generalized to this setting 
\cite{BU}.

Another important problem, of interest in quantization theory, is the explicit construction of Dolbeault cohomology operators depending on the given complex structure, in the symmetric case or the more general setting outlined above. A first step towards this goal is a more explicit realization of the classifying space of holomorphic principal fibre bundles, described in Theorem \er{t} in the symmetric case. According to \er{26}, the basic case $H=GL_n(\Cl)$ involves pairwise commuting $n\xx n$-matrices $B_1,\ldots,B_d,$ where $d=\dim_\Cl G/K,$ modulo joint conjugation; already a quite complicated object in algebraic geometry. Finally, the whole construction depends on the underlying invariant complex structure $j$ of $N=G/C$ which may not be unique if $C$ is a proper subgroup of $K.$ Analogous to the Narasimhan-Seshadri Theorem for Riemann surfaces, the moduli space of invariant complex structures may carry a canonical projectively flat connexion on the bundle of holomorphic sections.

\end{document}